\documentclass[12pt,a4paper]{article}
\usepackage{latexsym,amssymb,amsfonts,amsmath,amsthm,nccmath,float,enumitem,setspace,bm,authblk}
\usepackage[usenames,dvipsnames]{xcolor}
\usepackage[hidelinks]{hyperref}
\usepackage[margin=1.8cm]{geometry}
\usepackage{bm}
\usepackage{booktabs}
\allowdisplaybreaks[1]

\hypersetup{
	colorlinks,
	linkcolor={red!80!black},
	citecolor={blue!80!black},
	urlcolor={blue!80!black}
}


\newtheorem{thm}{Theorem}[section]

\newtheorem{pro}[thm]{Proposition}

\newtheorem{rem}[thm]{Remark}

\newtheorem{lem}[thm]{Lemma}
\newtheorem{core}[thm]{Corollary}

\newtheorem{example}{Example}[section]%

\def \leq {\leqslant}
\def \geq {\geqslant}

 \def\ZZ{\mathbb Z}

\def\mod{{\sf mod~}}  

\def\diag{{\rm diag}}

\def\exp{{\sf exp}}

\def \mod#1{{\:({\rm mod}\ #1)}}

\setstretch{1.1}

\let\oldproofname=\proofname
\renewcommand{\proofname}{\rm\bf{\oldproofname}}


\begin{document}
	
\title{Perfect state transfer on bi-Cayley graphs over abelian groups}
\author[ ]{Shixin Wang}
\author[ ]{Tao Feng}
\affil[ ]{School of Mathematics and Statistics, Beijing Jiaotong University, Beijing 100044, P. R. China}
\affil[ ]{sxwang@bjtu.edu.cn, tfeng@bjtu.edu.cn}

\renewcommand*{\Affilfont}{\small\it}
\renewcommand\Authands{ and }
\date{}
	
\maketitle
\footnotetext{Supported by NSFC under Grants 11871095 and 12271023}

\begin{abstract}
The study of perfect state transfer on graphs has attracted a great deal of attention during the past ten years because of its applications to quantum information processing and quantum computation. Perfect state transfer is understood to be a rare phenomenon. This paper establishes necessary and sufficient conditions for a bi-Cayley graph having a perfect state transfer over any given finite abelian group. As corollaries, many known and new results are obtained on Cayley graphs having perfect state transfer over abelian groups, (generalized) dihedral groups, semi-dihedral groups and generalized quaternion groups. Especially, we give an example of a connected non-normal Cayley graph over a dihedral group having perfect state transfer between two distinct vertices, which was thought impossible.
\end{abstract}

\noindent {\bf Keywords}: perfect state transfer; bi-Cayley graph; integral graph; quantum random walk
	

\section{Introduction}

Quantum random walks, introduced by Aharonov, Davidovich and Zagury \cite{ADZ} in 1993, are an analog of classical random walks in quantum systems, and have been applied extensively in quantum information and computation (cf. \cite{Amb,Childs,CG,CGW,SKW}). There are two types of quantum random walks: discrete random walks (cf. \cite{AAKV}) and continuous random walks (cf. \cite{FG}). This paper only involves continuous quantum random walks. A nice introduction to quantum random walks can be found in \cite{Kempe}.

As a particular case of quantum random walks, perfect state transfer on a graph concerns about a transfer of a quantum state from a vertex $u$ to a vertex $v$ in a special time $t$ such that the fidelity of the state transfer is unity (cf. \cite{bose,Christandl}). The problem on determining whether perfect state transfer exists is interesting because it is a crucial element of quantum information processing on emerging quantum devices (cf. \cite{BP}).

Let $\Gamma$ be an undirected simple graph with adjacency matrix $A$. Then $A$ is a symmetric $(0,1)$-matrix and all eigenvalues of $A$ are real numbers. Write $V(\Gamma)$ as the vertex set of $\Gamma$. The {\em transfer matrix} of $\Gamma$ at the time $t$, $t\in\mathbb{R}$, is defined by the following $|V(\Gamma)|\times |V(\Gamma)|$ matrix
$$
H(t):=\exp{(-\mathrm{i}tA)}
	=\sum\limits_{s=0}^{+\infty}\frac{(-\mathrm{i}tA)^s}{s!}
	=(H_{u,v}(t))_{u,v\in V(\Gamma)},
$$
where $\mathrm{i}= \sqrt{-1}$. $H(t)$ is used to describe continuous quantum random walks on the graph $\Gamma$. For any $u, v\in V(\Gamma)$, $\Gamma$ is said to have \emph{perfect state transfer} (PST) from $u$ to $v$ at the time $t$ if $|H_{u,v}(t)| = 1$. $\Gamma$ is called \emph{periodic} at $u$ if there is a time $t\neq 0$ such that $|H_{u,u}(t)| = 1$, and $\Gamma$ is called {\em periodic} if there is a time $t\neq 0$ such that $|H_{u,u}(t)| = 1$ for every vertex $u$. Define
\begin{equation*}
	\begin{split}
		T(u,v):=\{t: \Gamma\mbox{ has PST between $u$ and $v$ at the time $t$}\}.
	\end{split}
\end{equation*}
The goal is to determine $T(u, v)$ for any pair $\{u, v\}$ of vertices.

Determining the existence of perfect state transfer on a certain graph is closely related to analyze the spectrum of its adjacency matrix. Perfect state transfer is understood to be a rare phenomenon \cite{Godsil3}. Godsil \cite{Godsil2} gave a survey on perfect state transfer from a mathematical point of view. The existence of perfect state transfer has been obtained for many particular classes of graphs, for example, complete bipartite graphs \cite{Stefanak}, Hadamard diagonalizable graphs \cite{G}, circulant graphs \cite{Basic1,Basic2}, cubelike graphs \cite{Cheung}, NEPS of complete graphs \cite{L.neps}, graphs in Johnson scheme \cite{AHM}, and more generally, graphs in association schemes \cite{Coutinho}.

Using group representations, Cao et al. examined perfect state transfer for Cayley graphs over abelian groups \cite{f1}, dihedral groups \cite{C.d1,C.d2}, and semi-dihedral groups \cite{LCWW}. Bi-Cayley graphs are a natural generalization of Cayley graphs, and have received considerable attention during the past few years (see \cite{czfz,kmmm,zf} for example). Cayley graphs over abelian groups, dihedral groups, and semi-dihedral groups are all special cases of bi-Cayley graphs over abelian groups. This paper is devoted to exploring abelian bi-Cayley graphs which have perfect state transfers.

In Section 2, we give definitions on Cayley graphs and bi-Cayley graphs and recall some basic facts on finite abelian group representations. Section 3 counts eigenvalues and eigenvectors of adjacency matrices of bi-Cayley graphs over abelian groups (see Theorem \ref{thm:eigenvalues and eigenvectors of D}), and provides an explicit expression for every entry in the transfer matrix of any abelian bi-Cayley graph (see Corollary \ref{core:H-entry}). Section 4 gives necessary and sufficient conditions for an abelian bi-Cayley graph having PST (see Theorems \ref{PSTiffG0G1}, \ref{PSTiffG0G0} and \ref{thm:periodTneqvarnothing}). As corollaries, many known results in \cite{C.d1,C.d2,LCWW,f1} can be obtained. Section 5 examines PST on Cayley graphs that are also bi-Cayley graphs. Especially, we give an example of a connected non-normal Cayley graph over a dihedral group having PST between two distinct vertices (see Example \ref{ex:non-normalCay}), which produces a counterexample of Theorem $11$ in $\cite{C.d2}$. Concluding remarks are given in Section 6.

\section{Preliminaries}
	
For the group-theoretic and the graph-theoretic terminologies not defined in this paper we refer the reader to \cite{biggs,representation-theory}.
	
\subsection{Characters of finite abelian groups}
	
Let $G$ be a finite abelian group. A {\em character} $\chi$ of $G$ is a homomorphism of $G$ into the multiplicative group of complex roots of unity, that is, $\chi:G\longrightarrow\mathbb{C}^*$ such that  $\chi(ab)=\chi(a)\chi(b)$ for all $a,b\in G$. All characters of $G$ form a group $\hat{G}$ that is isomorphic to $G$ under the operation $(\chi\psi)(a):=\chi(a)\psi(a)$, where $\chi,\psi$ are characters of $G$ and $a\in G$. We often set $\hat{G}=\{\chi_g\mid g\in G\}$. The identity of the group $\hat{G}$ is the principal character $\chi_e$ ($e$ is the identity of $G$) that maps every element of $G$ to $1$.

	\subsection{Cayley graphs}\label{sec:Cayley}
	
	Cayley graphs were introduced by Arthur Cayley in 1878 to explain the concept of
	abstract groups that are described by a set of generators. Let $G$ be a group and $H$ be a subset of $G$
	such that $H$ does not contain the identity element of $G$. The {\em Cayley digraph} $\mathrm{Cay}(G, H)$ over $G$ with respect to $H$ is a digraph with the vertex set $G$
	and the edge set $\{(x,y)\mid x,y\in G, yx^{-1}\in H\}$. A Cayley digraph $\mathrm{Cay}(G, H)$ satisfying $H^{-1}=H$ is called a {\em Cayley graph}. Equivalently, a Cayley graph is a graph $(V, E)$ which admits an automorphism group acting regularly on the vertex set $V$. $\mathrm{Cay}(G,H)$ is connected if and only if $G$ is generated by $H$.
	
The spectrum of a Cayley digraph over an abelian group can be very conveniently expressed in terms of the representation theory of the underlying group. Let $\chi$ be a character of a finite abelian group $G$. Then $\big(\chi(x)\big)_{x\in G}$ is an eigenvector of the adjacency matrix $A$ of the Cayley digraph $\mathrm{Cay}(G,H)$ with eigenvalue
$$\chi(H):=\sum_{y\in H}\chi(y).$$
By the orthogonality relations for characters, the distinct characters $\chi_g$, $g\in G$, of $G$ give $|G|$  mutually orthogonal eigenvectors of $A$, and so $A$ can be diagonalized by a unitary similarity transformation. Specifically, take the $|G|\times |G|$ unitary matrix
	$$P:=\frac{1}{\sqrt {|G|}}\big(c_{h,g}\big)_{h,g\in G},\ \ \ {\rm where}\ c_{h,g}=\chi_g(h).$$
Then
$$P^* A P=\diag\big(\chi_g(H)\big)_{g\in G}$$ is a diagonal matrix, where $P^*$ means the conjugate transpose of $P$. Note that since $\chi_g(h)=\chi_h(g)$ for any $h,g\in G$, $P$ is a symmetric matrix.

\subsection{Bi-Cayley graphs}\label{sec:Bi-Cay}
	
A graph is said to be a \emph{bi-Cayley graph} (or {\em semi-Cayley graph}) over a group $G$ if it admits $G$ as a semiregular automorphism group with two orbits of equal size. Every bi-Cayley graph admits the following concrete realization (see \cite[Lemma 2.1]{rj}). Let $R,L$ and $T$ be subsets of a group $G$ such that $R=R^{-1}$, $L=L^{-1}$ and $R\cup L$ does not contain the identity element of $G$.
	Define the graph $\mathrm{BiCay}(G; R, L, T)$ to have vertex set the union of the right part $G_0=\{g_0\mid g\in G\}$ and the left part $G_1=\{g_1\mid g\in G\}$, and edge set the union of the right edges $\{\{h_0,g_0\}\mid gh^{-1}\in R\}$, the left edges $\{\{h_1,g_1\}\mid gh^{-1}\in L\}$ and the spokes $\{\{h_0,g_1\}\mid gh^{-1}\in T\}$. For convenience, for $g\in G$, when we say $g\in G_i$ with $i=0,1$, it means $g_i\in G_i$.
	
\begin{pro}{\rm \label{bicayleyadjacencymatrix} \cite[Lemma 3.1] {The-spectrum-of-semi-Cayley-graphs-over-abelian-groups}}
Let $\Gamma=\mathrm{BiCay}(G;R, L, T)$ be a bi-Cayley graph over an abelian group $G$. Let $A,C,B$ and $D$ be the adjacency matrices of $\mathrm{Cay}(G,R)$, $\mathrm{Cay}(G,L)$, $\mathrm{Cay}(G,T)$ and $\Gamma$, respectively. Then
$$D={
\left ( \begin{array}{cc}
				A & B\\
				B^T & C
			\end{array}
\right )},
$$
where $B^T$ means the transpose of $B$.
\end{pro}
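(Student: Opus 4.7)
The proposition is essentially a bookkeeping exercise: one orders the vertices so that the right part $G_0$ comes first and the left part $G_1$ second, each indexed by the elements of $G$ in the same order, and then reads off the four $|G|\times|G|$ blocks of $D$ directly from the definition of $\mathrm{BiCay}(G;R,L,T)$. The plan is therefore to verify each of the four blocks separately.

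First, I would handle the top-left block, i.e.\ the submatrix of $D$ indexed by $G_0\times G_0$. By definition of the right edges, the entry at row $h_0$ and column $g_0$ is $1$ iff $\{h_0,g_0\}$ is an edge iff $gh^{-1}\in R$. On the other hand, by the description of Cayley digraphs recalled in Section~2.2 (edge $(x,y)$ whenever $yx^{-1}\in H$), the adjacency matrix $A$ of $\mathrm{Cay}(G,R)$ has its $(h,g)$-entry equal to $1$ iff $gh^{-1}\in R$. The two conditions agree, so the top-left block is $A$. The same argument, applied to the left edges and $L$ in place of $R$, yields that the bottom-right block (indexed by $G_1\times G_1$) is $C$.

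Next I would handle the spokes, which produce the off-diagonal blocks. By the definition of $\mathrm{BiCay}(G;R,L,T)$, the entry of $D$ at row $h_0\in G_0$ and column $g_1\in G_1$ is $1$ iff $gh^{-1}\in T$, which is exactly the $(h,g)$-entry of the adjacency matrix $B$ of the Cayley digraph $\mathrm{Cay}(G,T)$. Hence the top-right block equals $B$. Since $\Gamma$ is undirected, $D$ is symmetric, so the bottom-left block must equal $B^{T}$; alternatively, one checks directly that the $(g_1,h_0)$-entry equals $1$ iff $gh^{-1}\in T$, i.e.\ iff the $(g,h)$-entry of $B^{T}$ equals $1$. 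Combining the four blocks gives the claimed form of $D$.

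No genuine obstacle arises here; the only thing to be careful about is aligning the row/column conventions so that the asymmetric set $T$ contributes $B$ on the top-right rather than $B^{T}$, and that the symmetry hypothesis $R=R^{-1}$, $L=L^{-1}$ is what makes $A$ and $C$ symmetric and the full matrix $D$ a valid adjacency matrix of an undirected graph. Since the statement already appears in \cite{The-spectrum-of-semi-Cayley-graphs-over-abelian-groups}, one could also simply cite it, but the self-contained verification above is immediate.
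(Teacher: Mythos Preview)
Your verification is correct and is the natural (indeed, the only) way to establish this block decomposition. The paper itself does not prove the proposition at all: it simply quotes it as \cite[Lemma~3.1]{The-spectrum-of-semi-Cayley-graphs-over-abelian-groups}, so your self-contained check of the four blocks is more than what the paper provides, and your final remark about simply citing the result is exactly what the authors do.
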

	
Note that due to $R=R^{-1}$ and $L=L^{-1}$, the matrices $A$ and $C$ in Proposition \ref{bicayleyadjacencymatrix} are real and symmetric, and so is $D$.

\section{Transfer matrices of abelian bi-Cayley graphs}
	
Since the adjacency matrix $D$ of a bi-Cayley graph $\Gamma$ is real and symmetric, there exists a unitary matrix $U$ such that $D$ is unitarily similar to a diagonal matrix $\Lambda$, that is, $D=U \Lambda U^*$. Thus the transfer matrix of $\Gamma$ at the time $t$ is
	$$\exp(-\mathrm{i}tD)=\sum\limits_{s=0}^{+\infty}\frac{(-\mathrm{i}tD)^s}{s!}=U \exp(-\mathrm{i}t\Lambda) U^*.$$
We shall characterize the matrices $U$ and $\Lambda$ in this section to give an explicit expression for every entry in $\exp(-\mathrm{i}tD)$.
	
We always adopt the following notation. Let $\Gamma=\mathrm{BiCay}(G;R, L, T)$ be a bi-Cayley graph over a finite abelian group $G$. Let $A,C,B$ and $D$ be the adjacency matrices of $\mathrm{Cay}(G,R)$, $\mathrm{Cay}(G,L)$, $\mathrm{Cay}(G,T)$ and $\Gamma$, respectively. Let $\{\chi_g\mid g\in G\}$ be the set of $|G|$ distinct characters of $G$, and set
	$$P:=\frac{1}{\sqrt {|G|}}\big(c_{h,g}\big)_{h,g\in G},\ \ \ {\rm where}\ c_{h,g}=\chi_g(h).$$
We use the convention that $\chi_g(\emptyset)=0$ for any $g\in G$. By Section \ref{sec:Cayley},
	$$\Lambda_A:=\diag\big(\chi_g(R)\big)_{g\in G}=P^* A P,$$
	$$\Lambda_C:=\diag\big(\chi_g(L)\big)_{g\in G}=P^* C P,$$
	$$\Lambda_B:=\diag\big(\chi_g(T)\big)_{g\in G}=P^* B P.$$
Since $P^* B^* P=\Lambda_B^*$ and $B$ is a $(0,1)$-matrix, $P^* B^T P=\overline{\Lambda_B}$. Take the unitary matrix \begin{equation}\label{eqn:U}
		\hat{U}:={
			\left ( \begin{array}{cc}
				P & 0\\
				0 & P
			\end{array}
			\right )}.
	\end{equation}
	Then
	\begin{equation}\label{eqn:U^* DU}
		\hat{U}^* D\hat{U}={
			\left ( \begin{array}{cc}
				P^* & 0\\
				0 & P^*
			\end{array}
			\right )
			\left ( \begin{array}{cc}
				A & B\\
				B^T & C
			\end{array}
			\right )
			\left ( \begin{array}{cc}
				P & 0\\
				0 & P
			\end{array}
			\right )=
			\left ( \begin{array}{cc}
				\Lambda_A & \Lambda_B\\
				\overline{\Lambda_B} & \Lambda_C
			\end{array}
			\right ):=\hat{D}}.
	\end{equation}
	
We shall diagonalize the matrix $\hat{D}$ by calculating its eigenvalues and eigenvectors in Section \ref{The eigenvalues and eigenvectors of bi-Cayley graphs}. This procedure also provides us with eigenvalues and eigenvectors of the adjacency matrix $D$ of $\Gamma$. Then a practical expression for each entry of the transfer matrix of $\Gamma$ is given in Section \ref{The transfer matrix of bi-Cayley graphs}.

\subsection{Eigenvalues and eigenvectors of abelian bi-Cayley graphs}\label{The eigenvalues and eigenvectors of bi-Cayley graphs}
	
We always assume that $G=\{g_1,g_2,\ldots,g_n\}$ is an abelian group of order $n$. $\hat{D}$ is a $2n\times 2n$ Hermitian matrix. Denote by $\hat{G}=\{\chi_{g_1},\chi_{g_2},\ldots\chi_{g_n}\}$ the character group of $G$. For convenience, $\chi_{k}$ is often used instead of $\chi_{g_k}$.
	
\begin{lem}\label{lem:eigenvalues and eigenvectors of Dhat}
	\begin{itemize}
		\item[$1)$] The $2n$ eigenvalues of $\hat{D}$ are
			\begin{equation}\label{eqn:eigenvalues}
				\lambda_{2k-j}=\frac{\chi_k(R)+\chi_k(L)+(-1)^j\sqrt{\big(\chi_k(R)-\chi_k(L)\big)^2+4|\chi_k(T)|^2}}{2},
			\end{equation}
			where $k=1,2,\ldots,n$ and $j=0,1$.
		\item[$2)$] An eigenvector of $\hat{D}$ associated with the eigenvalue $\lambda_{2k-j}$ is
			\begin{equation}\label{eqn:eigenvectors}
				u_{2k-j}=\frac{1}{\sqrt{|\xi_{k,2+(-1)^j}|^2+|\xi_{k,3+(-1)^j}|^2}}(0,\cdots,0,\underset{k\text{-th}}{\xi_{k,2+(-1)^j}},0,\cdots,0,\underset{(n+k)\text{-th}}{\xi_{k,3+(-1)^j}},0\cdots,0)^T,
			\end{equation}
			where
			\begin{equation}\label{eqn:xi1}
				(\xi_{k,1},\xi_{k,2})=\left\{
				\begin{array}{ll}
					(0,1), & {\rm{if}}\ \chi_k(T)=0,\ \chi_k(R)> \chi_k(L); \\
					(1,0), & {\rm{if}}\ \chi_k(T)=0,\ \chi_k(R)\leq \chi_k(L); \\
					(\frac{\chi_k(T)}{\lambda_{2k-1}-\chi_k(R)},1), & {\rm{if}}\ \chi_k(T)\neq 0,
				\end{array}
				\right.
			\end{equation}
			and
			\begin{equation}\label{eqn:xi2}
				(\xi_{k,3},\xi_{k,4})=\left\{
				\begin{array}{ll}
					(1,0), & {\rm{if}}\ \chi_k(T)=0,\ \chi_k(R)> \chi_k(L); \\
					(0,1), & {\rm{if}}\ \chi_k(T)=0,\ \chi_k(R)\leq \chi_k(L); \\
					(1,\frac{\overline{\chi_k(T)}}{\lambda_{2k}-\chi_k(L)}), & {\rm{if}}\ \chi_k(T)\neq 0.
				\end{array}
				\right.
			\end{equation}
			The $2n$ eigenvectors $u_{2k-j}$, $k=1,2,\ldots,n$ and $j=0,1$, are mutually orthogonal.
		\end{itemize}
	\end{lem}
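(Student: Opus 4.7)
The plan is to exploit the fact that $\Lambda_A$, $\Lambda_B$, and $\Lambda_C$ are all diagonal, so $\hat{D}$ decomposes into $n$ invariant two-dimensional subspaces on each of which it acts as a Hermitian $2\times 2$ matrix that can be diagonalized by hand. Concretely, for each $k\in\{1,\ldots,n\}$ the span of the standard basis vectors $e_{k}$ and $e_{n+k}$ of $\mathbb{C}^{2n}$ is $\hat{D}$-invariant, because the diagonality of $\Lambda_A,\Lambda_B,\Lambda_C$ gives $\hat{D}e_{k}=\chi_{k}(R)e_{k}+\overline{\chi_{k}(T)}e_{n+k}$ and $\hat{D}e_{n+k}=\chi_{k}(T)e_{k}+\chi_{k}(L)e_{n+k}$. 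Written in the basis $(e_{k},e_{n+k})$ the restriction of $\hat{D}$ is the $2\times 2$ Hermitian block
$$M_{k}=\begin{pmatrix}\chi_{k}(R)&\chi_{k}(T)\\ \overline{\chi_{k}(T)}&\chi_{k}(L)\end{pmatrix}.$$
Hermiticity uses $\chi_{k}(R),\chi_{k}(L)\in\mathbb{R}$, which follows from $R=R^{-1}$ and $L=L^{-1}$. The quadratic formula applied to the characteristic polynomial $(x-\chi_{k}(R))(x-\chi_{k}(L))-|\chi_{k}(T)|^{2}$ then returns exactly the two real roots $\lambda_{2k-1}$ and $\lambda_{2k}$ in \eqref{eqn:eigenvalues}, and letting $k$ range over $1,\ldots,n$ exhausts all $2n$ eigenvalues of $\hat{D}$.

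Next I would compute the corresponding eigenvectors of each $M_{k}$ by case analysis on whether $\chi_{k}(T)$ vanishes. When $\chi_{k}(T)\neq 0$, solving the first row of the system $(M_{k}-\lambda_{2k-1}I)v=0$ yields (up to scale) the vector $(\chi_{k}(T)/(\lambda_{2k-1}-\chi_{k}(R)),1)^{T}$, and solving the second row of $(M_{k}-\lambda_{2k}I)v=0$ yields $(1,\overline{\chi_{k}(T)}/(\lambda_{2k}-\chi_{k}(L)))^{T}$; both denominators are nonzero because $\chi_{k}(R)$ can solve the characteristic polynomial only when $\chi_{k}(T)=0$, and similarly for $\chi_{k}(L)$. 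When $\chi_{k}(T)=0$, $M_{k}$ is already diagonal with eigenvalues $\chi_{k}(R)$ and $\chi_{k}(L)$, and the two subcases in \eqref{eqn:xi1}--\eqref{eqn:xi2} simply assign the standard basis vectors $(1,0)^{T}$ and $(0,1)^{T}$ in the order required by the convention $\lambda_{2k-1}\leq\lambda_{2k}$. Re-embedding each such $2$-vector into $\mathbb{C}^{2n}$ with support in $\{k,n+k\}$ and dividing by the norm prefactor produces the unit vectors $u_{2k-j}$ of \eqref{eqn:eigenvectors}.

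Finally I would verify that the full system $\{u_{2k-j}\}$ is mutually orthogonal. For $k\neq k'$ the supports of $u_{2k-j}$ and $u_{2k'-j'}$ lie in the disjoint pairs $\{k,n+k\}$ and $\{k',n+k'\}$, so orthogonality is automatic. For equal $k$ with $j\neq j'$, orthogonality reduces to that of the two constructed eigenvectors of the Hermitian block $M_{k}$: when $\lambda_{2k-1}\neq\lambda_{2k}$ it is forced by the spectral theorem, and in the remaining degenerate subcase $\chi_{k}(R)=\chi_{k}(L)$, $\chi_{k}(T)=0$ the prescription outputs $(1,0)^{T}$ and $(0,1)^{T}$, which are orthogonal by inspection. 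The principal obstacle is not any deep computation but the combinatorial bookkeeping that ties together the sign $(-1)^{j}$, the ordering of $\lambda_{2k-1}$ and $\lambda_{2k}$, and the choice of which row of $M_{k}-\lambda I$ is used to read off each eigenvector; once that alignment is pinned down, every verification reduces to a line of algebra.
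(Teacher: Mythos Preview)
Your proposal is correct and follows essentially the same approach as the paper: both reduce $\hat{D}$ to the $n$ Hermitian $2\times 2$ blocks $M_{k}=\hat{D}_{k}$, read off the eigenvalues from the quadratic characteristic polynomial, and obtain the eigenvectors by the same case split on $\chi_{k}(T)$. Your framing via invariant subspaces $\mathrm{span}(e_{k},e_{n+k})$ is slightly more explicit about why vectors from different blocks are orthogonal, but otherwise the arguments coincide.
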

	
\begin{proof} 1) Let $\lambda$ be an eigenvalue of $\hat{D}$. Then
	\begin{align*}
		\begin{split}
			0=|\lambda I-\hat{D}|  = & \left |
			\setlength{\arraycolsep}{0.5pt}
			\begin{array}{cccccccc}
				\lambda-\chi_1(R)& 0&\cdots&0&-\chi_1(T)&0&\cdots&0\\
				0&\lambda-\chi_2(R)&\cdots&0&0&-\chi_2(T)&\cdots&0\\
				\vdots&\vdots&\ddots&\vdots&\vdots&\vdots&\ddots&\vdots\\
				0&0&\cdots&\lambda-\chi_n(R)&0&0&\cdots&-\chi_n(T)\\
				-\overline{\chi_1(T)}&0&\cdots&0&\lambda-\chi_1(L)& 0&\cdots&0\\
				0&-\overline{\chi_2(T)}&\cdots&0&0&\lambda-\chi_2(L)&\cdots&0\\
				\vdots&\vdots&\ddots&\vdots&\vdots&\vdots&\ddots&\vdots\\
				0&0&\cdots&-\overline{\chi_n(T)}&0&0&\cdots&\lambda-\chi_n(L)
			\end{array}\right|.
		\end{split}
	\end{align*}
	It follows that
	$$
	\begin{vmatrix}
		\lambda-\chi_k(R)&-\chi_k(T)\\
		-\overline{\chi_k(T)}&\lambda-\chi_k(L)
	\end{vmatrix}=0
	$$
	for $1\leq k\leq n$, which yields the eigenvalues of $\hat{D}$ shown in \eqref{eqn:eigenvalues}.
	
	2) For $1\leq k\leq n$, write
	$$\hat{D}_k=\begin{pmatrix}
		\chi_k(R)&\chi_k(T)\\
		\overline{\chi_k(T)}&\chi_k(L)\\
	\end{pmatrix}.$$
	Then $\lambda_{2k-1}$ and $\lambda_{2k}$ are eigenvalues of $\hat{D}_k$. If $(\xi_{k,2+(-1)^j},\xi_{k,3+(-1)^j})^T$ is an eigenvector of $\hat{D}_k$ associated with the eigenvalue $\lambda_{2k-j}$, $j=0,1$, then $u_{2k-j}$ given in \eqref{eqn:eigenvectors} is an eigenvector of $\hat{D}$ associated with the eigenvalue $\lambda_{2k-j}$. So it suffices to calculate the eigenvector $(\xi_{k,2+(-1)^j},\xi_{k,3+(-1)^j})^T$ of $\hat{D}_k$ associated with the eigenvalue $\lambda_{2k-j}$.
	
If $\chi_k(T)\neq 0$, then using \eqref{eqn:eigenvalues} one can check that $\lambda_{2k-j}\not\in\{\chi_k(R),\chi_k(L)\}$ for any $j\in\{0,1\}$, and so $\lambda_{2k-1}-\chi_k(R)$ and $\lambda_{2k}-\chi_k(L)$ can be used as denominators. This makes us to choose $(\xi_{k,1},\xi_{k,2})=(\frac{\chi_k(T)}{\lambda_{2k-1}-\chi_k(R)},1)$ and $(\xi_{k,3},\xi_{k,4})=
(1,\frac{\overline{\chi_k(T)}}{\lambda_{2k}-\chi_k(L)})$. Note that $\lambda_{2k-1}$ and $\lambda_{2k}$ are two distinct eigenvalues of $\hat{D}_k$ (otherwise $\chi_k(T)=0$), and $\hat{D}_k$ is a Hermitian matrix, so $(\xi_{k,1},\xi_{k,2})^T$ and $(\xi_{k,3},\xi_{k,4})^T$ are orthogonal.
	
If $\chi_k(T)=0$, when $\chi_k(R)> \chi_k(L)$, $\lambda_{2k-1}=\chi_k(L)$ and $\lambda_{2k}=\chi_k(R)$, and so $(\xi_{k,1},\xi_{k,2})=(0,1)$ and $(\xi_{k,3},\xi_{k,4})=(1,0)$; when $\chi_k(R)\leq \chi_k(L)$, $\lambda_{2k-1}=\chi_k(R)$ and $\lambda_{2k}=\chi_k(L)$, and so $(\xi_{k,1},\xi_{k,2})=(1,0)$ and $(\xi_{k,3},\xi_{k,4})=(0,1)$.
\end{proof}	
	
\begin{thm}\label{thm:eigenvalues and eigenvectors of D}
Let $\Gamma$ be a bi-Cayley graph over an abelian group of order $n$ with the adjacency matrix $D$. Then $D$ has eigenvalues $\lambda_{2k-j}$ given in \eqref{eqn:eigenvalues} where $k=1,2,\ldots,n$ and $j=0,1$, and an eigenvector of $D$ associated with $\lambda_{2k-j}$ is $\hat{U}u_{2k-j}$ where $\hat{U}$ and $u_{2k-j}$ are given in \eqref{eqn:U} and \eqref{eqn:eigenvectors}, respectively. These eigenvectors $\hat{U}u_{2k-j}$ for $k=1,2,\ldots,n$ and $j=0,1$ are mutually orthogonal.
\end{thm}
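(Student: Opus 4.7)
The plan is to derive this theorem directly from Lemma \ref{lem:eigenvalues and eigenvectors of Dhat} together with the unitary similarity established in equation \eqref{eqn:U^* DU}. Since that equation asserts $\hat{U}^{*}D\hat{U}=\hat{D}$ and since $\hat{U}$ is unitary (being block-diagonal with two unitary blocks $P$), we may rewrite it as $D=\hat{U}\hat{D}\hat{U}^{*}$. Thus $D$ and $\hat{D}$ are unitarily similar, so they share the same multiset of eigenvalues; this immediately yields the spectral list $\lambda_{2k-j}$ from Lemma \ref{lem:eigenvalues and eigenvectors of Dhat}(1).

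Next, I would transport the eigenvectors through the similarity. For any $k\in\{1,\ldots,n\}$ and $j\in\{0,1\}$, applying $D=\hat{U}\hat{D}\hat{U}^{*}$ to the vector $\hat{U}u_{2k-j}$ gives
\begin{equation*}
D\,\hat{U}u_{2k-j}=\hat{U}\hat{D}\hat{U}^{*}\hat{U}u_{2k-j}=\hat{U}\hat{D}u_{2k-j}=\lambda_{2k-j}\,\hat{U}u_{2k-j},
\end{equation*}
where the last equality uses Lemma \ref{lem:eigenvalues and eigenvectors of Dhat}(2). Hence $\hat{U}u_{2k-j}$ is an eigenvector of $D$ associated with $\lambda_{2k-j}$, as claimed.

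Finally, mutual orthogonality of the $\hat{U}u_{2k-j}$ follows from the fact that $\hat{U}$ is unitary: for any two indices $(k,j)\neq (k',j')$,
\begin{equation*}
(\hat{U}u_{2k-j})^{*}(\hat{U}u_{2k'-j'})=u_{2k-j}^{*}\hat{U}^{*}\hat{U}u_{2k'-j'}=u_{2k-j}^{*}u_{2k'-j'}=0,
\end{equation*}
by the mutual orthogonality of the $u_{2k-j}$ established in Lemma \ref{lem:eigenvalues and eigenvectors of Dhat}(2). There is no real obstacle here; the entire content of the theorem is bookkeeping, with all the analytical work already absorbed into the case analysis of Lemma \ref{lem:eigenvalues and eigenvectors of Dhat} (the case $\chi_k(T)=0$ versus $\chi_k(T)\neq 0$) and into the computation \eqref{eqn:U^* DU}. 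The only minor point to note is the unitarity of $\hat{U}$, which follows at once from $P^{*}P=I_{n}$ applied block-wise.
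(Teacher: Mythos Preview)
Your proposal is correct and follows essentially the same approach as the paper: both use the unitary similarity $\hat{U}^{*}D\hat{U}=\hat{D}$ from \eqref{eqn:U^* DU} to transfer the eigenvalues and eigenvectors of $\hat{D}$ (given by Lemma~\ref{lem:eigenvalues and eigenvectors of Dhat}) to $D$, and deduce orthogonality of the $\hat{U}u_{2k-j}$ from that of the $u_{2k-j}$. Your write-up is slightly more explicit about the unitarity of $\hat{U}$ and the orthogonality computation, but there is no substantive difference.
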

	
\begin{proof} By \eqref{eqn:U^* DU}, $D$ and $\hat{D}$ have the same eigenvalues, so by Lemma \ref{lem:eigenvalues and eigenvectors of Dhat}, $D$ has eigenvalues $\lambda_{2k-j}$ for $k=1,2,\ldots,n$ and $j=0,1$. By Lemma \ref{lem:eigenvalues and eigenvectors of Dhat}, $\hat{D} u_{2k-j}=\lambda_{2k-j} u_{2k-j}$. Since $\hat{U}^* D\hat{U} =\hat{D}$, we have $\hat{U}^* D\hat{U} u_{2k-j}=\lambda_{2k-j} u_{2k-j}$, which yields $D\hat{U} u_{2k-j}=\lambda_{2k-j}\hat{U} u_{2k-j}$. It follows that $\hat{U}u_{2k-j}$ is an eigenvector of $D$ associated with $\lambda_{2k-j}$. The orthogonality of $\hat{U}u_{2k-j}$ for $k=1,2,\ldots,n$ and $j=0,1$ comes from the orthogonality of $u_{2k-j}$ in Lemma \ref{lem:eigenvalues and eigenvectors of Dhat}. \end{proof}
	
We remark that eigenvalues and eigenvectors of a bi-Cayley graph over a finite abelian group were also obtained in \cite[Theorem 3.2]{The-spectrum-of-semi-Cayley-graphs-over-abelian-groups} and \cite[Lemma 11]{AT}, respectively, but we here provide a short proof to count eigenvalues and give simple explicit expressions for eigenvectors.

\subsection{Characterization of transfer matrices}\label{The transfer matrix of bi-Cayley graphs}
	
\begin{lem}\label{lem:transfer matrix}
Let $\Gamma$ be a bi-Cayley graph over an abelian group $G$ of order $n$ with the adjacency matrix $D$. Theorem $\ref{thm:eigenvalues and eigenvectors of D}$ shows that $D$ has eigenvalues $\lambda_{2k-j}$ given in \eqref{eqn:eigenvalues} where $k=1,2,\ldots,n$ and $j=0,1$. Let $\hat{G}=\{\chi_{1},\chi_{2},\ldots\chi_{n}\}$ be the character group of $G$. For $k=1,2,\ldots,n$ and $j=0,1$, write
		\begin{equation}\label{eqn:hatxi}
			\hat{\xi}_{k,2+(-1)^j}:=\frac{\xi_{k,2+(-1)^j}}{\sqrt{|\xi_{k,2+(-1)^j}|^2+|\xi_{k,3+(-1)^j}|^2}},
		\end{equation}
		and
		\begin{equation}\label{eqn:hatxi1}
			\hat{\xi}_{k,3+(-1)^j}:=\frac{\xi_{k,3+(-1)^j}}{\sqrt{|\xi_{k,2+(-1)^j}|^2+|\xi_{k,3+(-1)^j}|^2}},
		\end{equation}
		where $\xi_{k,2+(-1)^j}$ and $\xi_{k,3+(-1)^j}$ are given in \eqref{eqn:xi1} and \eqref{eqn:xi2}. Then the transfer matrix of $\Gamma$ at the time $t$ is
		$$
			H(t)=\exp(-\mathrm{i}tD)=U \exp{(-\mathrm{i}t\Lambda)}U^*,
		$$
		where $\Lambda=\diag(\lambda_1,\lambda_2,\ldots,\lambda_{2n})$ is a diagonal matrix, and
$$
			U=\frac{1}{\sqrt{n}}\left(
			\begin{array}{cccccccc}
				\hat{\xi}_{11}\chi_1(g_1)&\hat{\xi}_{13}\chi_1(g_1)&\hat{\xi}_{21}\chi_2(g_1)&\hat{\xi}_{23}\chi_2(g_1)&\cdots&\hat{\xi}_{n1}\chi_n(g_1)&\hat{\xi}_{n3}\chi_n(g_1)&\\
				\hat{\xi}_{11}\chi_1(g_2)&\hat{\xi}_{13}\chi_1(g_2)&\hat{\xi}_{21}\chi_2(g_2)&\hat{\xi}_{23}\chi_2(g_2)&\cdots&\hat{\xi}_{n1}\chi_n(g_2)&\hat{\xi}_{n3}\chi_n(g_2)&\\
				\vdots&\vdots&\vdots&\vdots&\ddots&\vdots&\vdots&\\
				\hat{\xi}_{11}\chi_1(g_n)&\hat{\xi}_{13}\chi_1(g_n)&\hat{\xi}_{21}\chi_2(g_n)&\hat{\xi}_{23}\chi_2(g_n)&\cdots&\hat{\xi}_{n1}\chi_n(g_n)&\hat{\xi}_{n3}\chi_n(g_n)&\\
				\hat{\xi}_{12}\chi_1(g_1)&\hat{\xi}_{14}\chi_1(g_1)&\hat{\xi}_{22}\chi_2(g_1)&\hat{\xi}_{24}\chi_2(g_1)&\cdots&\hat{\xi}_{n2}\chi_n(g_1)&\hat{\xi}_{n4}\chi_n(g_1)&\\
				\hat{\xi}_{12}\chi_1(g_2)&\hat{\xi}_{14}\chi_1(g_2)&\hat{\xi}_{22}\chi_2(g_2)&\hat{\xi}_{24}\chi_2(g_2)&\cdots&\hat{\xi}_{n2}\chi_n(g_2)&\hat{\xi}_{n4}\chi_n(g_2)&\\
				\vdots&\vdots&\vdots&\vdots&\ddots&\vdots&\vdots&\\
				\hat{\xi}_{12}\chi_1(g_n)&\hat{\xi}_{14}\chi_1(g_n)&\hat{\xi}_{22}\chi_2(g_n)&\hat{\xi}_{24}\chi_2(g_n)&\cdots&\hat{\xi}_{n2}\chi_n(g_n)&\hat{\xi}_{n4}\chi_n(g_n)&\\
			\end{array}\right).
$$
	\end{lem}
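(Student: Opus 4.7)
The plan is to assemble the orthonormal eigenbasis of $D$ furnished by Theorem \ref{thm:eigenvalues and eigenvectors of D} into the columns of $U$, and then invoke the standard spectral functional calculus. First I would verify that the vectors $u_{2k-j}$ produced in Lemma \ref{lem:eigenvalues and eigenvectors of Dhat} are unit vectors: the leading factor $1/\sqrt{|\xi_{k,2+(-1)^j}|^2+|\xi_{k,3+(-1)^j}|^2}$ is exactly the normalization constant, so the only nonzero coordinates of $u_{2k-j}$ are $\hat{\xi}_{k,2+(-1)^j}$ in position $k$ and $\hat{\xi}_{k,3+(-1)^j}$ in position $n+k$. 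Since $\hat{U}$ is unitary, the vectors $\hat{U}u_{2k-j}$ remain of unit length; by Theorem \ref{thm:eigenvalues and eigenvectors of D} they are mutually orthogonal and each satisfies $D\hat{U}u_{2k-j}=\lambda_{2k-j}\hat{U}u_{2k-j}$.

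Next I would compute $\hat{U}u_{2k-j}$ entrywise. Because $\hat{U}=\diag(P,P)$ is block diagonal and $u_{2k-j}$ has only two nonzero coordinates, the multiplication extracts the $k$-th column of $P$ in each block, scaled by $\hat{\xi}_{k,2+(-1)^j}$ (top block) and $\hat{\xi}_{k,3+(-1)^j}$ (bottom block). The defining relation $c_{h,g}=\chi_g(h)$ gives the $k$-th column of $P$ as $\tfrac{1}{\sqrt n}(\chi_k(g_1),\chi_k(g_2),\ldots,\chi_k(g_n))^T$. Reading off these columns in the order $j=1$ (yielding column $2k-1$ with indices $2+(-1)^1=1$ and $3+(-1)^1=2$) followed by $j=0$ (yielding column $2k$ with indices $3$ and $4$), for $k=1,2,\ldots,n$, produces exactly the matrix $U$ displayed in the lemma.

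Finally, since the columns of $U$ form an orthonormal eigenbasis of $D$ with associated eigenvalues placed on the diagonal of $\Lambda=\diag(\lambda_1,\ldots,\lambda_{2n})$, we have $DU=U\Lambda$, so $U$ is unitary and $D=U\Lambda U^*$. Substituting into the power series definition of the matrix exponential gives
$$
\exp(-\mathrm{i}tD)=\sum_{s=0}^{+\infty}\frac{(-\mathrm{i}t)^s}{s!}(U\Lambda U^*)^s=U\Big(\sum_{s=0}^{+\infty}\frac{(-\mathrm{i}t\Lambda)^s}{s!}\Big)U^*=U\exp(-\mathrm{i}t\Lambda)U^*,
$$
which is the stated identity. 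The only real delicacy is the index bookkeeping in the middle step: one must check that the arithmetic $2+(-1)^j$ and $3+(-1)^j$ correctly swaps $\{\hat{\xi}_{k,1},\hat{\xi}_{k,2}\}$ (for $j=1$) with $\{\hat{\xi}_{k,3},\hat{\xi}_{k,4}\}$ (for $j=0$), and that the stacking of these scalars against the $k$-th column of $P$ agrees with the two-block column pattern exhibited in the displayed $U$.
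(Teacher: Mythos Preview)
Your proposal is correct and follows essentially the same approach as the paper: assemble the orthonormal eigenvectors $\hat{U}u_{2k-j}$ from Theorem~\ref{thm:eigenvalues and eigenvectors of D} as the columns of $U$, deduce $D=U\Lambda U^*$, and apply the matrix exponential. Your write-up is in fact slightly more explicit about the unit-length verification, the entrywise computation of $\hat{U}u_{2k-j}$, and the power-series substitution, but the logical skeleton matches the paper's proof.
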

	
\begin{proof} By Theorem $\ref{thm:eigenvalues and eigenvectors of D}$, $D$ has eigenvalues $\lambda_{2k-j}$ ($k=1,2,\ldots,n$ and $j=0,1$), and an eigenvector of $D$ associated with $\lambda_{2k-j}$ is $\hat{U}u_{2k-j}$, where
	\begin{center}
		$\hat{U}=
		\left ( \begin{array}{cc}
			P & 0\\
			0 & P
		\end{array}
		\right )
		=\frac{1}{\sqrt{n}}\left(\begin{array}{cccccccc}
			\chi_1(g_1)&\chi_2(g_1)&\cdots&\chi_n(g_1)&0&0&\cdots&0\\
			\chi_1(g_2)&\chi_2(g_2)&\cdots&\chi_n(g_2)&0&0&\cdots&0\\
			\vdots&\vdots&\ddots&\vdots&\vdots&\vdots&\ddots&\vdots\\
			\chi_1(g_n)&\chi_2(g_n)&\cdots&\chi_n(g_n)&0&0&\cdots&0\\
			0&0&\cdots&0&\chi_1(g_1)&\chi_2(g_1)&\cdots&\chi_n(g_1)\\
			0&0&\cdots&0&\chi_1(g_2)&\chi_2(g_2)&\cdots&\chi_n(g_2)\\
			\vdots&\vdots&\ddots&\vdots&\vdots&\vdots&\ddots&\vdots\\
			0&0&\cdots&0&\chi_1(g_n)&\chi_2(g_n)&\cdots&\chi_n(g_n)\\
		\end{array}
		\right)$,
	\end{center}
	and
	$$
	u_{2k-j}=(0,\cdots,0,\underset{k\text{-th}}{\hat{\xi}_{k,2+(-1)^j}},0,\cdots,0,\underset{(n+k)\text{-th}}{\hat{\xi}_{k,3+(-1)^j}},0\cdots,0)^T.
	$$
	Let $\Lambda=\diag(\lambda_1,\lambda_2,\ldots,\lambda_{2n})$ and $U=(\hat{U}u_1,\hat{U}u_2,\ldots,\hat{U}u_{2n})$. Then $U^* D U=\Lambda$. Since $U$ is a unitary matrix, we have $D=U \Lambda U^*$. Thus $H(t)=\exp(-\mathrm{i}tD)=U \exp{(-\mathrm{i}t\Lambda)}U^*$. \end{proof}
	
By Lemma \ref{lem:transfer matrix}, we can give an explicit expression for every entry in the transfer matrix $H(t)$ as follows.

\begin{core}\label{core:H-entry}
We adopt the notation of Lemma $\ref{lem:transfer matrix}$. Let $H(t)$ be the transfer matrix of $\Gamma$ at the time $t$. The rows and columns of $H(t)$ are labeled with the values in $G_0$ and $G_1$ which are defined in Section $\ref{sec:Bi-Cay}$. Let $H_{g_p,g_q}(t)$ be the element in the $g_p$-th row and the $g_q$-th column of the matrix $H(t)$. Then
		$$
		nH_{g_p,g_q}(t)=\left\{
		\begin{array}{ll} \sum\limits_{k=1}^{n}\chi_{k}(g_pg_q^{-1})(|\hat{\xi}_{k,1}|^2\exp(-\mathrm{i}t\lambda_{2k-1})+|\hat{\xi}_{k,3}|^2\exp(-\mathrm{i}t\lambda_{2k})), & {\rm{if}}\ (g_p,g_q)\in G_0\times G_0; \\					\sum\limits_{k=1}^{n}\chi_{k}(g_pg_q^{-1})(|\hat{\xi}_{k,2}|^2\exp(-\mathrm{i}t\lambda_{2k-1})+|\hat{\xi}_{k,4}|^2\exp(-\mathrm{i}t\lambda_{2k})),
			& {\rm{if}}\ (g_p,g_q)\in G_1\times G_1; \\              					\sum\limits_{k=1}^{n}\chi_{k}(g_pg_q^{-1})(\hat{\xi}_{k,1}\overline{\hat{\xi}_{k,2}}\exp(-\mathrm{i}t\lambda_{2k-1})+\hat{\xi}_{k,3}\overline{\hat{\xi}_{k,4}}\exp(-\mathrm{i}t\lambda_{2k})),
			& {\rm{if}}\ (g_p,g_q)\in G_0\times G_1;\\					\sum\limits_{k=1}^{n}\chi_{k}(g_pg_q^{-1})(\hat{\xi}_{k,2}\overline{\hat{\xi}_{k,1}}\exp(-\mathrm{i}t\lambda_{2k-1})+\hat{\xi}_{k,4}\overline{\hat{\xi}_{k,3}}\exp(-\mathrm{i}t\lambda_{2k})),
			& {\rm{if}}\ (g_p,g_q)\in G_1\times G_0.	
		\end{array}
		\right.
		$$
	\end{core}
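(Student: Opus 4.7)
The corollary is a direct consequence of Lemma~\ref{lem:transfer matrix}, which already gives the factorization $H(t)=U\exp(-\mathrm{i}t\Lambda)U^*$ with an explicit description of $U$. My plan is therefore to simply read off each entry of this product by inspecting the block structure of $U$, and to separate the calculation into the four cases that appear in the statement.

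First I would write
\begin{equation*}
H_{g_p,g_q}(t)=\sum_{m=1}^{2n}U_{g_p,m}\,e^{-\mathrm{i}t\lambda_m}\,\overline{U_{g_q,m}},
\end{equation*}
and group the $2n$ summands into $n$ pairs indexed by $k=1,2,\ldots,n$, where the $k$-th pair collects the columns $2k-1$ (eigenvalue $\lambda_{2k-1}$) and $2k$ (eigenvalue $\lambda_{2k}$). Looking at the matrix $U$ displayed in Lemma~\ref{lem:transfer matrix}, the row indexed by $g_p\in G_0$ in columns $2k-1$ and $2k$ is $\frac{1}{\sqrt n}(\hat{\xi}_{k,1}\chi_k(g_p),\hat{\xi}_{k,3}\chi_k(g_p))$, while the row indexed by $g_p\in G_1$ in those two columns is $\frac{1}{\sqrt n}(\hat{\xi}_{k,2}\chi_k(g_p),\hat{\xi}_{k,4}\chi_k(g_p))$. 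Consequently each pair contributes $\frac{1}{n}\chi_k(g_p)\overline{\chi_k(g_q)}$ times a weighted sum of two exponentials, where the two weights depend only on which part ($G_0$ or $G_1$) contains $g_p$ and $g_q$.

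Next I would use the fact that every character of the finite abelian group $G$ takes values in the complex roots of unity, so $\overline{\chi_k(g_q)}=\chi_k(g_q)^{-1}=\chi_k(g_q^{-1})$, and then $\chi_k(g_p)\overline{\chi_k(g_q)}=\chi_k(g_pg_q^{-1})$ by the homomorphism property. This turns the prefactor into the desired $\chi_k(g_pg_q^{-1})/n$ appearing in the statement. Multiplying through by $n$ and carrying out the four case computations yields:
for $(g_p,g_q)\in G_0\times G_0$ the weight is $\hat{\xi}_{k,1}\overline{\hat{\xi}_{k,1}}=|\hat{\xi}_{k,1}|^2$ on $\lambda_{2k-1}$ and $|\hat{\xi}_{k,3}|^2$ on $\lambda_{2k}$; for $(g_p,g_q)\in G_1\times G_1$ it is $|\hat{\xi}_{k,2}|^2$ and $|\hat{\xi}_{k,4}|^2$; for $(g_p,g_q)\in G_0\times G_1$ it is $\hat{\xi}_{k,1}\overline{\hat{\xi}_{k,2}}$ and $\hat{\xi}_{k,3}\overline{\hat{\xi}_{k,4}}$; and for $(g_p,g_q)\in G_1\times G_0$ it is $\hat{\xi}_{k,2}\overline{\hat{\xi}_{k,1}}$ and $\hat{\xi}_{k,4}\overline{\hat{\xi}_{k,3}}$.

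Since Lemma~\ref{lem:transfer matrix} has already assembled the diagonalization and recorded the precise form of $U$, there is no genuine obstacle here: the whole argument is essentially bookkeeping of indices. The one place to take a little care is to align the row-labelling convention (vertices of $G_0$ occupy rows $1,\ldots,n$ and vertices of $G_1$ occupy rows $n+1,\ldots,2n$, as fixed by the block form of $\hat{U}$ in \eqref{eqn:U}) with the $g_p,g_q$-labels used in the statement, so that each of the four cases picks up the correct $\hat{\xi}$'s from the two halves of $U$.
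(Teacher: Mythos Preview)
Your proposal is correct and follows essentially the same approach as the paper: the paper's proof simply records the $g_p$-th row of $U$ for $g_p\in G_0$ and $g_p\in G_1$, notes that $\exp(-\mathrm{i}t\Lambda)$ is diagonal, and then multiplies out exactly as you describe. Your treatment is slightly more explicit about the character identity $\chi_k(g_p)\overline{\chi_k(g_q)}=\chi_k(g_pg_q^{-1})$, but otherwise the arguments coincide.
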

	
\begin{proof} Note that the \emph{$g_p$-th} row of the matrix $U$ given in Lemma \ref{lem:transfer matrix} is:
	\begin{equation*}
		\begin{array}{lr}
			g_p\in G_0:
			~\frac{1}{\sqrt n}(\hat{\xi}_{11}\chi_1(g_p),\hat{\xi}_{13}\chi_1(g_p),\hat{\xi}_{21}\chi_2(g_p),\hat{\xi}_{23}\chi_2(g_p),\cdots,\hat{\xi}_{n1}\chi_n(g_p),\hat{\xi}_{n3}\chi_n(g_p))\\[3mm]
			g_p\in G_1:
			~\frac{1}{\sqrt n}(\hat{\xi}_{12}\chi_1(g_p),\hat{\xi}_{14}\chi_1(g_p),\hat{\xi}_{22}\chi_2(g_p),\hat{\xi}_{24}\chi_2(g_p),\cdots,\hat{\xi}_{n2}\chi_n(g_p),\hat{\xi}_{n4}\chi_n(g_p)),\\
		\end{array}
	\end{equation*}
	and $\exp{(-\mathrm{i}t\Lambda)}=\diag(\exp{(-\mathrm{i}t\lambda_1}),\exp{(-\mathrm{i}t\lambda_2}),\ldots,\exp{(-\mathrm{i}t\lambda_{2n}}))$. Then one can apply Lemma \ref{lem:transfer matrix} to complete the proof. \end{proof}

\section{Perfect state transfer on abelian bi-Cayley graphs}
	
This section is devoted to providing necessary and sufficient conditions for a bi-Caylay graph over an abelian group to have PST.

Recall that at the beginning of Section \ref{The eigenvalues and eigenvectors of bi-Cayley graphs}, we make the convention that $G=\{g_1,g_2,\ldots,g_n\}$ is an abelian group with the identity element $g_1$, and $\hat{G}=\{\chi_{g_1},\chi_{g_2},\ldots\chi_{g_n}\}$ is the character group of $G$. For $1\leq k\leq n$, $\chi_{g_k}$ is simply written as $\chi_{k}$ and $\chi_{k^{-1}}$ is often used instead of $\chi_{g_k^{-1}}$. So $\chi_{1}$ is the principal character that maps every element of $G$ to $1$. Also we use the convention that $\chi_g(\emptyset)=0$ for any $g\in G$.
	
\begin{lem}\label{PSTiff}
Let $\Gamma=\mathrm{BiCay}(G;R, L, T)$ be a bi-Cayley graph over an abelian group $G$ of order $n$. Then $\Gamma$ has PST between vertices $g_p$ and $g_q$ at the time $t$ if and only if the following conditions hold for each $1\leq k\leq n$, where $\lambda_1,\lambda_2,\ldots,\lambda_{2n}$ are given in Lemma $\ref{lem:eigenvalues and eigenvectors of Dhat}$, and
$$X=\left\{
\begin{array}{ll}
	R, & {\rm{if}}\ (g_p,g_q)\in (G_0\times G_0); \\	[3mm]
	L, & {\rm{if}}\ (g_p,g_q)\in (G_1\times G_1).
\end{array}
\right.
$$

	\begin{itemize}
		\item [$1)$] $$\chi_{k}({g_p}{g_q}^{-1})=\left\{
		\begin{array}{ll}
			\pm 1, & {\rm{if}}\ (g_p,g_q)\in (G_0\times G_0)\cup (G_1\times G_1); \\	[3mm]
			
			\frac{|\chi_k(T)|}{\chi_k(T)}, & {\rm{if}}\ (g_p,g_q)\in G_0\times G_1\  {\rm{and}} \ \exp(\rm{i}t(\lambda_{2}-\lambda_{2{k}}))=1 ;\\[3mm]
			
			-\frac{|\chi_k(T)|}{\chi_k(T)}, & {\rm{if}}\ (g_p,g_q)\in G_0\times G_1\  {\rm{and}} \ \exp(\rm{i}t(\lambda_{2}-\lambda_{2{k}}))=-1 ;\\[3mm]
			
			\frac{|\chi_k(T)|}{\overline{\chi_k(T)}}, & {\rm{if}}\ (g_p,g_q)\in G_1\times G_0\  {\rm{and}} \ \exp(\rm{i}t(\lambda_{2}-\lambda_{2{k}}))=1 ;\\[3mm]
			
			-\frac{|\chi_k(T)|}{\overline{\chi_k(T)}}, & {\rm{if}}\ (g_p,g_q)\in  G_1\times G_0\  {\rm{and}} \ \exp(\rm{i}t(\lambda_{2}-\lambda_{2{k}}))=-1.
		\end{array}
		\right.
		$$	
		\item [$2)$] If $(g_p,g_q)\in (G_0\times G_0)\cup (G_1\times G_1)$, then
		
		$$\left\{
		\begin{array}{ll}
			t(\lambda_{2k}-\lambda_{2k-1})\in\{2z\pi\mid z\in \ZZ\}\ \ {\rm{and}} &  \\
			\ \ \ t(\lambda_{2}-\lambda_{2k-1})\in \{(2z-\frac{\chi_{k}({g_p}{g_q}^{-1})-1}{2})\pi\mid z\in \ZZ\}, & {\rm{if}}\ \chi_k(T)\neq 0; \\	[3mm]
			t(|X|-\chi_k(X))\in\{(2z-\frac{\chi_{k}({g_p}{g_q}^{-1})-1}{2})\pi\mid z\in \ZZ\}, & {\rm{if}}\ T=\emptyset;\\	[3mm] t(\lambda_{2}-\chi_k(X))\in\{(2z-\frac{\chi_{k}({g_p}{g_q}^{-1})-1}{2})\pi\mid z\in \ZZ\}, & {\rm{if}}\ \chi_k(T)= 0, T\neq\emptyset.
		\end{array}
		\right.
		$$	
		
\item [$3)$] If $(g_p,g_q)\in (G_0\times G_1)\cup (G_1\times G_0)$, then $R=L$, $\chi_k(T)\neq 0$, $t(\lambda_{2k}-\lambda_{2k-1})\in \{(2z+1)\pi\mid z\in \ZZ\}$ and  $t(\lambda_{2}-\lambda_{2k})\in \{z\pi\mid z\in \ZZ\}$.
	\end{itemize}
\end{lem}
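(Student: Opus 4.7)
The plan is to start from the explicit expression for $H_{g_p,g_q}(t)$ given in Corollary \ref{core:H-entry}, which writes $nH_{g_p,g_q}(t)$ as a sum of $n$ complex numbers $z_1(t),\ldots,z_n(t)$ indexed by the characters of $G$. The elementary fact that, given $|z_k|\le 1$ for every $k$, the equality $|z_1+\cdots+z_n|=n$ holds iff each $|z_k|=1$ and all $z_k$ share a common argument, reduces the problem, case by case, to two sub-questions for every $k$: (i) characterising when $|z_k(t)|=1$, and (ii) characterising when the resulting unit vectors line up.

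For $(g_p,g_q)\in G_0\times G_0$ (the $G_1\times G_1$ case being parallel with $R$ replaced by $L$), the $k$-th summand is $\chi_k(g_pg_q^{-1})\bigl(|\hat{\xi}_{k,1}|^2 e^{-\mathrm{i}t\lambda_{2k-1}}+|\hat{\xi}_{k,3}|^2 e^{-\mathrm{i}t\lambda_{2k}}\bigr)$, and the unitarity of the $2\times 2$ diagonalising block in Lemma \ref{lem:eigenvalues and eigenvectors of Dhat} gives $|\hat{\xi}_{k,1}|^2+|\hat{\xi}_{k,3}|^2=1$. When $\chi_k(T)\ne 0$ both coefficients are strictly positive, so $|z_k|=1$ forces $e^{-\mathrm{i}t\lambda_{2k-1}}=e^{-\mathrm{i}t\lambda_{2k}}$, i.e.\ $t(\lambda_{2k}-\lambda_{2k-1})\in 2\pi\ZZ$; when $\chi_k(T)=0$ the summand collapses to $\chi_k(g_pg_q^{-1})e^{-\mathrm{i}t\chi_k(R)}$, which is already unimodular. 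To extract the constraint $\chi_k(g_pg_q^{-1})\in\{\pm 1\}$, I would pair $\chi_k$ with $\chi_{k^{-1}}=\overline{\chi_k}$: since $R=R^{-1}$, $L=L^{-1}$ and $|\chi_{k^{-1}}(T)|=|\chi_k(T)|$, the eigenvalues satisfy $\lambda_{2k^{-1}-j}=\lambda_{2k-j}$, so the $k$-th and $k^{-1}$-th summands have identical exponential factors and differ only by $\chi_k(g_pg_q^{-1})$ versus its complex conjugate, which forces that value to be real of modulus $1$.

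Having fixed $\chi_k(g_pg_q^{-1})=\pm 1$, I would align the $k$-th summand with the $k=1$ (principal) one; the latter equals $e^{-\mathrm{i}t\lambda_1}=e^{-\mathrm{i}t\lambda_2}$ when $T\ne\emptyset$ (by applying (i) to $k=1$) and $e^{-\mathrm{i}t|R|}$ when $T=\emptyset$. Equating arguments then produces the three sub-cases of part 2): $t(\lambda_2-\lambda_{2k-1})\in\{(2z-\tfrac{\chi_k(g_pg_q^{-1})-1}{2})\pi:z\in\ZZ\}$ when $\chi_k(T)\ne 0$; $t(|X|-\chi_k(X))$ in the same set when $T=\emptyset$; and $t(\lambda_2-\chi_k(X))$ in the same set when $T\ne\emptyset$ but $\chi_k(T)=0$.

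The off-diagonal case $(g_p,g_q)\in G_0\times G_1$ is where the technical weight lies. Using the row-orthogonality $\hat{\xi}_{k,1}\overline{\hat{\xi}_{k,2}}+\hat{\xi}_{k,3}\overline{\hat{\xi}_{k,4}}=0$ of the $2\times 2$ unitary block, the $k$-th summand collapses to $c_k\chi_k(g_pg_q^{-1})\bigl(e^{-\mathrm{i}t\lambda_{2k}}-e^{-\mathrm{i}t\lambda_{2k-1}}\bigr)$ with $c_k:=\hat{\xi}_{k,3}\overline{\hat{\xi}_{k,4}}$. A direct check from \eqref{eqn:xi1}--\eqref{eqn:xi2} shows $c_k=0$ whenever $\chi_k(T)=0$, while otherwise $2|c_k|\le |\hat{\xi}_{k,3}|^2+|\hat{\xi}_{k,4}|^2=1$ by AM-GM, with equality iff $|\xi_{k,4}|=1$, equivalently $\chi_k(R)=\chi_k(L)$; together with $|e^{-\mathrm{i}t\lambda_{2k}}-e^{-\mathrm{i}t\lambda_{2k-1}}|\le 2$ (equality iff $t(\lambda_{2k}-\lambda_{2k-1})\in (2\ZZ+1)\pi$), saturating $|nH_{g_p,g_q}(t)|\le n$ simultaneously forces $\chi_k(T)\ne 0$, $\chi_k(R)=\chi_k(L)$, and the half-period condition for every $k$; Fourier inversion on $G$ then yields $R=L$. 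Finally, matching the argument of $z_k$ with that of $z_1=-e^{-\mathrm{i}t\lambda_1}=e^{-\mathrm{i}t\lambda_2}$ and solving for $\chi_k(g_pg_q^{-1})$ produces the $\pm\tfrac{|\chi_k(T)|}{\chi_k(T)}$ dichotomy and the residual constraint $t(\lambda_2-\lambda_{2k})\in\ZZ\pi$; the $G_1\times G_0$ case then follows by complex conjugation. The chief obstacle I anticipate is keeping the phase bookkeeping consistent across all the sub-cases and verifying the equivalence $|\xi_{k,4}|=1\Leftrightarrow\chi_k(R)=\chi_k(L)$ cleanly from the eigenvalue formula \eqref{eqn:eigenvalues}.
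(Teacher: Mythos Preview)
Your proposal is correct and follows essentially the same route as the paper's proof: start from Corollary~\ref{core:H-entry}, use the triangle inequality to force each of the $n$ summands to be unimodular and aligned, then exploit the $\chi_k\leftrightarrow\chi_{k^{-1}}$ symmetry (via $\lambda_{2k^{-1}-j}=\lambda_{2k-j}$) to make the character values real, and finally read off the congruences on $t$. Your appeal to the unitarity of the $2\times2$ diagonalising block (row/column orthonormality) is a clean substitute for the paper's explicit verification of $|\hat\xi_{k,1}|^2+|\hat\xi_{k,3}|^2=1$ and $\xi_{k,4}=-\overline{\xi_{k,1}}$; just note that in the off-diagonal case the single alignment equation $\chi_k(g_pg_q^{-1})=\tfrac{|\chi_k(T)|}{\chi_k(T)}e^{-\mathrm{i}t(\lambda_2-\lambda_{2k})}$ does not by itself split into the $\pm$ dichotomy and $t(\lambda_2-\lambda_{2k})\in\pi\ZZ$ --- you must invoke the $k\leftrightarrow k^{-1}$ pairing once more there, exactly as the paper does.
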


\begin{proof} {\bf Case 1}: First we consider the case $(g_p,g_q)\in G_0\times G_0$. By Corollary \ref{core:H-entry},
\begin{equation}\label{eqn:|H00|=n}
	|H_{g_p,g_q}(t)|=1 \Longleftrightarrow\big|\sum\limits_{k=1}^{n}\chi_{k}({g_p}{g_q}^{-1})(|\hat{\xi}_{k,1}|^2\exp(-{\mathrm i}t\lambda_{2k-1})+|\hat{\xi}_{k,3}|^2\exp(-{\mathrm i}t\lambda_{2k}))\big|=n.
\end{equation}
Since $|\chi_{k}({g_p}{g_q}^{-1})|=1$ and $\lambda_{2k-1}$, $\lambda_{2k}$ are real numbers,
$$\big|\chi_{k}({g_p}{g_q}^{-1})(|\hat{\xi}_{k,1}|^2\exp(-{\mathrm i}t\lambda_{2k-1})+|\hat{\xi}_{k,3}|^2\exp(-{\mathrm i}t\lambda_{2k}))\big|\leq |\hat{\xi}_{k,1}|^2+|\hat{\xi}_{k,3}|^2.$$ By \eqref{eqn:xi1} and \eqref{eqn:xi2} together with the use of \eqref{eqn:eigenvalues}, we have $\xi_{k,2}=\xi_{k,3}$ and $|\xi_{k,1}|^2+|\xi_{k,2}|^2=|\xi_{k,3}|^2+|\xi_{k,4}|^2$. Thus \begin{equation}\label{eqn:sumsqr}
	|\hat{\xi}_{k,1}|^2+|\hat{\xi}_{k,3}|^2=\frac{|\xi_{k,1}|^2}{|\xi_{k,1}|^2+|\xi_{k,2}|^2}+\frac{|\xi_{k,3}|^2}{|\xi_{k,3}|^2+|\xi_{k,4}|^2}=1,
\end{equation}
which yields
$ \big|\chi_{k}({g_p}{g_q}^{-1})(|\hat{\xi}_{k,1}|^2\exp(-{\mathrm i}t\lambda_{2k-1})+|\hat{\xi}_{k,3}|^2\exp(-{\mathrm i}t\lambda_{2k}))\big|\leq 1.
$
The summation in \eqref{eqn:|H00|=n} has $n$ terms and the modulus of each term is no more than $1$, so $|H_{g_p,g_q}(t)| = 1$ if and only if for any $k\in\{1,2,\ldots,n\}$,
\begin{equation}\label{eqn:H00l=1}
	\big|\chi_{k}({g_p}{g_q}^{-1})(|\hat{\xi}_{k,1}|^2\exp(-{\mathrm i}t\lambda_{2k-1})+|\hat{\xi}_{k,3}|^2\exp(-{\mathrm i}t\lambda_{2k}))\big|=1,
\end{equation}
and for any $k,k'\in\{1,2,\ldots,n\}$,
\begin{align}\label{eqn:H00i=l}
	& \chi_{k}({g_p}{g_q}^{-1})(|\hat{\xi}_{k,1}|^2\exp(-{\mathrm i}t\lambda_{2k-1})+|\hat{\xi}_{k,3}|^2\exp(-{\mathrm i}t\lambda_{2k}))\notag\\
	= & \chi_{k'}({g_p}{g_q}^{-1})(|\hat{\xi}_{{k'},1}|^2\exp(-{\mathrm i}t\lambda_{2{k'}-1})+|\hat{\xi}_{{k'},3}|^2\exp(-{\mathrm i}t\lambda_{2{k'}})).
\end{align}

Clearly \eqref{eqn:H00l=1} holds if and only if
$\big||\hat{\xi}_{k,1}|^2\exp(-{\mathrm i}t\lambda_{2k-1})+
|\hat{\xi}_{k,3}|^2\exp(-{\mathrm i}t\lambda_{2k})\big|=1$.
By \eqref{eqn:sumsqr}, $|\hat{\xi}_{k,1}|^2+|\hat{\xi}_{k,3}|^2=1$, which yields $|\hat{\xi}_{k,1}|^2 |\exp(-{\mathrm i}t\lambda_{2k-1})|+|\hat{\xi}_{k,3}|^2 |\exp(-{\mathrm i}t \lambda_{2k})|=1$. It follows that \eqref{eqn:H00l=1} holds if and only if
\begin{equation}\label{eqn:7-25-2}
	\big||\hat{\xi}_{k,1}|^2\exp(-{\mathrm i}t\lambda_{2k-1})+
	|\hat{\xi}_{k,3}|^2\exp(-{\mathrm i}t\lambda_{2k})\big|=|\hat{\xi}_{k,1}|^2 |\exp(-{\mathrm i}t\lambda_{2k-1})|+|\hat{\xi}_{k,3}|^2 |\exp(-{\mathrm i}t \lambda_{2k})|=1.
\end{equation}
If $\chi_k(T)=0$, by \eqref{eqn:xi1} and \eqref{eqn:xi2}, \eqref{eqn:7-25-2} always holds. If $\chi_k(T)\neq 0$, according to the triangle inequality on complex numbers, \eqref{eqn:7-25-2} holds if and only if there exists a real positive number $\alpha_k$ such that $|\hat{\xi}_{k,1}|^2\exp(-{\mathrm i}t\lambda_{2k-1})=\alpha_k |\hat{\xi}_{k,3}|^2\exp(-{\mathrm i}t\lambda_{2k})$. Taking the modulus on both sides, we have
\begin{equation}\label{eqn:alphak1}
	\exp(-{\mathrm i}t\lambda_{2k-1})=\exp(-{\mathrm i}t\lambda_{2k}),
\end{equation}
which implies $t(\lambda_{2k}-\lambda_{2k-1})\in \{2z\pi\mid z\in \ZZ\}$.

Taking $k'=1$ in \eqref{eqn:H00i=l}, we have for any $k\in\{1,2,\ldots,n\}$,
\begin{equation}\label{eqn:7-25-1}
	\chi_{k}({g_p}{g_q}^{-1})(|\hat{\xi}_{k,1}|^2\exp(-{\mathrm i}t\lambda_{2k-1})+|\hat{\xi}_{k,3}|^2\exp(-{\mathrm i}t\lambda_{2k}))=|\hat{\xi}_{1,1}|^2\exp(-{\mathrm i}t\lambda_{1})+|\hat{\xi}_{1,3}|^2\exp(-{\mathrm i}t\lambda_{2}).
\end{equation}
If $\chi_k(T)\neq 0$, then $\chi_1(T)=|T|\neq 0$. By \eqref{eqn:sumsqr}, under the assumption of \eqref{eqn:alphak1}, \eqref{eqn:7-25-1} holds if and only if
\begin{equation}\label{eqn:chikTneq0}
	\chi_{k}({g_p}{g_q}^{-1})=\exp(-{\mathrm i}t(\lambda_{2}-\lambda_{2k-1})).
\end{equation}
If $\chi_k(T)= 0$, by \eqref{eqn:xi1} and \eqref{eqn:xi2}, the left side of \eqref{eqn:7-25-1} is equal to $\chi_{k}({g_p}{g_q}^{-1})\exp(-\mathrm{i}t\chi_k(R))$. For the right side of \eqref{eqn:7-25-1},
if $T=\emptyset$, then $\chi_1(T)=0$, and so by \eqref{eqn:xi1} and \eqref{eqn:xi2}, the right side of \eqref{eqn:7-25-1} is equal to $\exp(-\mathrm{i}t|R|)$; if $T\neq\emptyset$, then $\chi_1(T)\neq 0$, and so by \eqref{eqn:sumsqr}, in the assumption of \eqref{eqn:alphak1}, the right side of \eqref{eqn:7-25-1} can be always written as $\exp(-\mathrm{i}t\lambda_2)$. On the whole, if $\chi_k(T)= 0$, then in the assumption of \eqref{eqn:alphak1}, \eqref{eqn:7-25-1} holds if and only if
\begin{equation}\label{eqn:8-11}
	\chi_k(g_pg_q^{-1})=\left\{
	\begin{array}{ll}
		\exp(-\mathrm{i}t(|R|-\chi_k(R))),~\text{if }T=\emptyset;\\[3mm] \exp(-\mathrm{i}t(\lambda_{2}-\chi_k(R))),~\text{if }T\neq\emptyset.
	\end{array}
	\right.
\end{equation}

Since $\chi_k(R)$ and $\chi_k(L)$ are both real numbers, $\chi_k(R)=\chi_{k^{-1}}(R)$ and $\chi_k(L)=\chi_{k^{-1}}(L)$. Noting that $\chi_{k^{-1}}(T)=\overline{\chi_{k}(T)}$ and using \eqref{eqn:eigenvalues}, we have $\lambda_{2k^{-1}-1}=\lambda_{2k-1}$. Thus if $\chi_k(T)\neq 0$, by \eqref{eqn:chikTneq0},
\begin{equation}\label{eqn:chik-1Tneq0}
	\chi_{k^{-1}}({g_p}{g_q}^{-1})=\exp(-{\mathrm i}t(\lambda_{2}-\lambda_{2k^{-1}-1})=\exp(-{\mathrm i}t(\lambda_{2}-\lambda_{2k-1})).
\end{equation}	
On the other hand,
\begin{align}\label{eqn:chik-1} \chi_{k^{-1}}({g_p}{g_q}^{-1})=\overline{\chi_{k}({g_p}{g_q}^{-1})}\overset{\eqref{eqn:chikTneq0}}{=}\overline{\exp(-{\mathrm i}t(\lambda_{2}-\lambda_{2k-1}))}.
\end{align}
Combining \eqref{eqn:chik-1Tneq0} and \eqref{eqn:chik-1}, we have
$\exp(-{\mathrm i}t(\lambda_{2}-\lambda_{2k-1}))=\overline{\exp(-{\mathrm i}t(\lambda_{2}-\lambda_{2k-1}))}$.
This implies $\exp(-{\mathrm i}t(\lambda_{2}-\lambda_{2k-1}))=\pm1$.
Hence  if $\chi_k(T)\neq 0$, \eqref{eqn:chikTneq0} holds if and only if $\chi_{k}({g_p}{g_q}^{-1})=\pm1$ and
\begin{equation*}\label{eqn:jj''}
 t(\lambda_{2}-\lambda_{2k-1}                                                        )\in \{(2z-\frac{\chi_{k}({g_p}{g_q}^{-1})-1}{2})\pi\mid z\in \ZZ\}.
\end{equation*}

Similarly,  if $\chi_k(T)= 0$, substituting $k$ by $k^{-1}$ in \eqref{eqn:8-11}, we have that \eqref{eqn:8-11} holds if and if only $\chi_{k}({g_p}{g_q}^{-1})=\pm1$, $t(|R|-\chi_k(R))\in\{(2z-\frac{\chi_{k}({g_p}{g_q}^{-1})-1}{2})\pi\mid z\in \ZZ\}$ when $T=\emptyset$, and
$t(\lambda_{2}-\chi_k(R))\in\{(2z-\frac{\chi_{k}({g_p}{g_q}^{-1})-1}{2})\pi\mid z\in \ZZ\}$ when $\chi_k(T)= 0$ and $T\neq\emptyset$.

{\bf Case 2}: Similar arguments to those for $(g_p,g_q)\in G_0\times G_0$ show that the desired results hold for $(g_p,g_q)\in G_1\times G_1$.

{\bf Case 3}: Now we consider the case $(g_p,g_q)\in (G_0\times G_1)\cup (G_1\times G_0)$. Since $\Gamma$ is an undirected graph whose adjacency matrix is symmetric, $H_{g_p,g_q}(t)=H_{g_q,g_p}(t)$. So without loss of generality, we may assume that $(g_p,g_q)\in G_0\times G_1$.

By Corollary \ref{core:H-entry},
\begin{equation}\label{eqn:H01t}
	H_{g_p,g_q}(t)=\frac{1}{n} \sum\limits_{k=1}^{n}\chi_{k}({g_p}{g_q}^{-1})(\hat{\xi}_{k,1}\overline{\hat{\xi}_{k,2}}\exp(-{\mathrm i}t\lambda_{2k-1})+\hat{\xi}_{k,3}\overline{\hat{\xi}_{k,4}}\exp(-{\mathrm i}t\lambda_{2k})).
\end{equation}	
By \eqref{eqn:xi1} and \eqref{eqn:xi2}, if there exists some $k\in\{1,2,\cdots,n\}$ such that $\chi_k(T)=0$, then either $\xi_{k,1}=\xi_{k,4}=0$ or $\xi_{k,2}=\xi_{k,3}=0$, and so either $\hat{\xi}_{k,1}=\hat{\xi}_{k,4}=0$ or $\hat{\xi}_{k,2}=\hat{\xi}_{k,3}=0$.
Thus \eqref{eqn:H01t} can be written as
\begin{equation}\label{eqn:H01t1}
	H_{g_p,g_q}(t)
	=\frac{1}{n}
	\sum\limits_{k=1,\cdots,n \atop \chi_k(T)\neq 0 }
	\chi_{k}({g_p}{g_q}^{-1})(\hat{\xi}_{k,1}\overline{\hat{\xi}_{k,2}}\exp(-{\mathrm i}t\lambda_{2k-1})+\hat{\xi}_{k,3}\overline{\hat{\xi}_{k,4}}\exp(-{\mathrm i}t\lambda_{2k})).
\end{equation}
When $\chi_k(T)\neq 0$ for some $k\in\{1,2,\cdots,n\}$, by \eqref{eqn:xi1} and \eqref{eqn:xi2}, $\xi_{k,2}=\xi_{k,3}=1$ and ${\xi_{k,4}}=-\overline{{\xi_{k,1}}}$ (this is from the fact that $\lambda_{2k-1}+\lambda_{2k}=\chi_k(R)+\chi_k(L)$ which comes from \eqref{eqn:eigenvalues}). So $\hat{\xi_{k,2}}=\hat{\xi_{k,3}}\in \mathbb{R}$ and $\hat{\xi_{k,4}}=-\overline{\hat{\xi_{k,1}}}$. Then using \eqref{eqn:H01t1} we have
$$
H_{g_p,g_q}(t)
=\frac{1}{n}
\sum\limits_{k=1,\cdots,n \atop \chi_k(T)\neq 0 }\chi_{k}({g_p}{g_q}^{-1})\hat{\xi}_{k,1}\hat{\xi}_{k,2}(\exp(-{\mathrm i}t\lambda_{2k-1})-\exp(-{\mathrm i}t\lambda_{2k})).
$$
It follows that \begin{align}\label{eqn:H01tn}
	|H_{g_p,g_q}(t)|=1&\Longleftrightarrow \Big|
	\sum\limits_{k=1,\cdots,n \atop \chi_k(T)\neq 0 }\chi_{k}({g_p}{g_q}^{-1})\hat{\xi}_{k,1}\hat{\xi}_{k,2}(\exp(-{\mathrm i}t\lambda_{2k-1})-\exp(-{\mathrm i}t\lambda_{2k}))\Big|=n.
\end{align}
Since
$$ \big|\chi_{k}({g_p}{g_q}^{-1})\hat{\xi}_{k,1}\hat{\xi}_{k,2}(\exp(-{\mathrm i}t\lambda_{2k-1})-\exp(-{\mathrm i}t\lambda_{2k}))\big|\leq 2|\hat{\xi}_{k,1}\hat{\xi}_{k,2}|
=\frac{2|\xi_{k,1}\xi_{k,2}|}{|\xi_{k,1}|^2+|\xi_{k,2}|^2}\leq 1,
$$
by \eqref{eqn:H01tn} we have that $|H_{g_p,g_q}(t)| = 1$ if and only if for any $k\in\{1,2,\ldots,n\}$, $\chi_k(T)\neq 0,$
\begin{equation}\label{eqn:H01t=1}
	\big|\chi_{k}({g_p}{g_q}^{-1})\hat{\xi}_{k,1}\hat{\xi}_{k,2}(\exp(-{\mathrm i}t\lambda_{2k-1})-\exp(-{\mathrm i}t\lambda_{2k}))\big|=1,
\end{equation}
and for any $k,k'\in\{1,2,\ldots,n\}$,
\begin{align}\label{eqn:H01i=l}
	& \chi_{k}({g_p}{g_q}^{-1})\hat{\xi}_{k,1}\hat{\xi}_{k,2}(\exp(-{\mathrm i}t\lambda_{2k-1})-\exp(-{\mathrm i}t\lambda_{2k})) \notag\\		= & \chi_{k'}({g_p}{g_q}^{-1})\hat{\xi}_{{k'},1}\hat{\xi}_{{k'},2}(\exp(-{\mathrm i}t\lambda_{2{k'}-1})-\exp(-{\mathrm i}t\lambda_{2{k'}})).
\end{align}	

Clearly \eqref{eqn:H01t=1} holds if and only if $|\hat{\xi}_{k,1}\hat{\xi}_{k,2}|\cdot \big|\exp(-{\mathrm i}t\lambda_{2k-1})-\exp(-{\mathrm i}t\lambda_{2k})\big|=1$. Since
$|\hat{\xi}_{k,1}\hat{\xi}_{k,2}|
=\frac{|\xi_{k,1}\xi_{k,2}|}{|\xi_{k,1}|^2+|\xi_{k,2}|^2}\leq \frac{1}{2}$
and
$\big|\exp(-{\mathrm i}t\lambda_{2k-1})-\exp(-{\mathrm i}t\lambda_{2k})\big|\leq 2,$
the equality \eqref{eqn:H01t=1} holds if and only if
$|\xi_{k,1}|=|\xi_{k,2}|$ and
$
\exp(-{\mathrm i}t\lambda_{2k-1})=-\exp(-{\mathrm i}t\lambda_{2k}).
$
Under the assumption of $\chi_k(T)\neq 0$,
\begin{align}\label{eqn:t1}
	|\xi_{k,1}|=|\xi_{k,2}| \overset{\eqref{eqn:xi1}}{\Longleftrightarrow} |\xi_{k,1}|=|\xi_{k,2}|=1 {\Longleftrightarrow} |\chi_k(T)|=|\lambda_{2k-1}-\chi_k(R)|\overset{\eqref{eqn:eigenvalues}}{\Longleftrightarrow} \chi_k(R)=\chi_k(L),
\end{align}
and
\begin{align}\label{eqn:t2}
	\exp(-{\mathrm i}t\lambda_{2k-1})=-\exp(-{\mathrm i}t\lambda_{2k}) \Longleftrightarrow  t(\lambda_{2k}-\lambda_{2k-1})\in \{(2z+1)\pi\mid z\in \ZZ\}.
\end{align}
Furthermore, \eqref{eqn:t1} holds (i.e., $\chi_k(R)=\chi_k(L)$ for every $k\in\{1,2,\ldots,n\}$) if and only if $R=L$. This is because for each $g\in G$,
$$\sum\limits_{k=1}^{n}\chi_k(g^{-1})\chi_k(R)=
\sum\limits_{k=1}^{n}\sum\limits_{r\in R}\chi_k(g^{-1})\chi_k(r)=
\sum\limits_{r\in R}\sum\limits_{k=1}^{n}\chi_k(g^{-1}r)=
n\sum\limits_{r\in R}\delta_{rg},$$
where $\delta_{rg}=1$ if $r=g$ and $0$ otherwise; similarly, for each $g\in G$, $$\sum\limits_{k=1}^{n}\chi_k(g^{-1})\chi_k(L)=n\sum\limits_{l\in L}\delta_{lg}.$$
Since $\chi_k(R)=\chi_k(L)$ for every $k\in\{1,2,\ldots,n\}$, $\sum\limits_{k=1}^{n}\chi_k(g^{-1})\chi_k(R)=\sum\limits_{k=1}^{n}\chi_k(g^{-1})\chi_k(L)$, so $$n\sum\limits_{r\in R}\delta_{rg}=n\sum\limits_{l\in L}\delta_{lg}$$ for each $g\in G$, which leads to $R=L$.

Taking ${k'}=1$ in \eqref{eqn:H01i=l}, we have that for any $k\in\{1,2,\ldots,n\}$,
\begin{align}\label{eqn:f1}
	\hat{\xi}_{1,1}\hat{\xi}_{1,2}(\exp(-{\mathrm i}t\lambda_{1})-\exp(-{\mathrm i}t\lambda_{2}))
	=\chi_{k}({g_p}{g_q}^{-1})\hat{\xi}_{k,1}\hat{\xi}_{k,2}(\exp(-{\mathrm i}t\lambda_{2k-1})-\exp(-{\mathrm i}t\lambda_{2k})).
\end{align}
In the assumption of \eqref{eqn:t2}, i.e. $\exp(-{\mathrm i}t\lambda_{2k-1})=-\exp(-{\mathrm i}t\lambda_{2k})$, \eqref{eqn:f1} holds if and only if
\begin{align}\label{eqn:f10}
	\chi_k(g_p g_q^{-1})=\frac{\hat{\xi}_{1,1}\hat{\xi}_{1,2}}{\hat{\xi}_{k,1}\hat{\xi}_{k,2}}\cdot \frac{\exp(-{\mathrm i}t\lambda_{2})}{\exp(-{\mathrm i}t\lambda_{2k})}=\frac{\hat{\xi}_{1,1}\hat{\xi}_{1,2}}{\hat{\xi}_{k,1}\hat{\xi}_{k,2}}\cdot \exp(-{\mathrm i}t(\lambda_{2}-\lambda_{2k})).
\end{align}
By \eqref{eqn:t1}, $|\xi_{k,1}|=|\xi_{k,2}|=1$, and by \eqref{eqn:xi1} when $\chi_k(T)\neq 0$, $\xi_{k,2}=1$, so $\hat{\xi}_{k,1}\hat{\xi}_{k,2}=\frac{{\xi}_{k,1}{\xi}_{k,2}}{|{\xi}_{k,1}|^2+|{\xi}_{k,2}|^2}=\frac{\xi_{k,1}}{2}$. which yields $\frac{\hat{\xi}_{1,1}\hat{\xi}_{1,2}}{\hat{\xi}_{k,1}\hat{\xi}_{k,2}}=\frac{{\xi}_{1,1}}{{\xi}_{k,1}}$.
It follows that under the assumption of \eqref{eqn:t1}, \eqref{eqn:f10} holds if and only if
\begin{align}\label{eqn:f20}
	\chi_k(g_p g_q^{-1})& =\frac{\xi_{1,1}}{\xi_{k,1}}\cdot \exp(-{\mathrm i}t(\lambda_{2}-\lambda_{2k})) \overset{\eqref{eqn:xi1}}{=}\frac{\chi_{1}(T)(\lambda_{2k-1}-\chi_{k}(R))}{\chi_k(T)(\lambda_1-\chi_{1}(R))}\cdot \exp(-{\mathrm i}t(\lambda_{2}-\lambda_{2k})).
\end{align}
By \eqref{eqn:t1}, $\chi_k(R)=\chi_k(L)$. Then by \eqref{eqn:eigenvalues} we have $\lambda_{2k-1}-\chi_k(R)=-|\chi_k(T)|$. Thus by \eqref{eqn:f20},
\begin{align}\label{eqn:f30}
	\chi_k(g_p g_q^{-1})&=\frac{\chi_{1}(T)|\chi_k(T)|}{\chi_k(T)|\chi_1(T)|}\cdot \exp(-{\mathrm i}t(\lambda_{2}-\lambda_{2k}))=\frac{|\chi_k(T)|}{\chi_k(T)}\cdot \exp(-{\mathrm i}t(\lambda_{2}-\lambda_{2k})),
\end{align}
which implies
\begin{align}\label{eqn:conj}
	\overline{\chi_{k}(g_p g_q^{-1})}=\frac{|\chi_k(T)|}{\overline{\chi_k(T)}}\cdot \overline{\exp(-{\mathrm i}t(\lambda_{2}-\lambda_{2k}))}.
\end{align}
On the other hand,
\begin{align}\label{eqn:inverse-1}
	\overline{\chi_{k}(g_p g_q^{-1})}=\chi_{k^{-1}}(g_p g_q^{-1})&\overset{\eqref{eqn:f30}}{=}\frac{|\chi_{k^{-1}}(T)|}{\chi_{k^{-1}}(T)}\cdot \exp(-{\mathrm i}t(\lambda_{2}-\lambda_{2k^{-1}}))=\frac{|\chi_k(T)|}{\overline{\chi_k(T)}}\cdot \exp(-{\mathrm i}t(\lambda_{2}-\lambda_{2k^{-1}})).
\end{align}
Since
$$\lambda_{2k}=\chi_k(R)+|\chi_k(T)|=\overline{\chi_k(R)}+|\overline{\chi_k(T)}|
=\chi_{k^{-1}}(R)+|\overline{\chi_k(T)}|=\lambda_{2k^{-1}},$$
we have by \eqref{eqn:inverse-1} that
\begin{align}\label{eqn:conj-8-6}
	\overline{\chi_{k}(g_p g_q^{-1})}=\frac{|\chi_k(T)|}{\overline{\chi_k(T)}}\cdot \exp(-{\mathrm i}t(\lambda_{2}-\lambda_{2k})).
\end{align}
Combining \eqref{eqn:conj} and \eqref{eqn:conj-8-6}, we have
$$\overline{\exp(-{\mathrm i}t(\lambda_{2}-\lambda_{2k}))}=\exp(-{\mathrm i}t(\lambda_{2}-\lambda_{2k})),$$
which implies $\exp(-{\mathrm i}t(\lambda_{2}-\lambda_{2k}))=\pm 1$, and so
\begin{equation}\label{eqn:jj'}
t(\lambda_{2}-\lambda_{2k})\in \{z\pi\mid z\in \ZZ\}.
\end{equation}
Therefore, under the assumption of \eqref{eqn:H01t=1}, \eqref{eqn:H01i=l} holds if and only if
\begin{equation*}
	\chi_k(g_qg_q^{-1})=
	\left\{
	\begin{array}{ll}
		\frac{|\chi_k(T)|}{\chi_k(T)},~&\text{if }\exp(-{\mathrm i}t(\lambda_{2}-\lambda_{2k}))=1,\\
		-\frac{|\chi_k(T)|}{\chi_k(T)},~&\text{if }\exp(-{\mathrm i}t(\lambda_{2}-\lambda_{2k}))=-1,
	\end{array}
	\right.
\end{equation*}
and \eqref{eqn:jj'} holds. \end{proof}

Taking $T=\emptyset$ in Lemma \ref{PSTiff}, we can obtain Lemma 2.1 in \cite{f1}. When $T\neq \emptyset$, we have the following corollary.

\begin{core}\label{core-t}
Let $\Gamma=\mathrm{BiCay}(G;R,L,T)$ with $T\neq \emptyset$ be a bi-Cayley graph over a finite abelian group $G$. Then pairs of vertices having PST at the time $t$ in $\Gamma$ are either all from $(G_0\times G_0)\cup(G_1\times G_1)$, or all from $(G_0\times G_1)\cup (G_1\times G_0)$.
\end{core}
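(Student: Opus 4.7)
The plan is to argue by contradiction: suppose that at some single time $t$, $\Gamma$ admits PST between a pair of vertices $(g_p,g_q)\in(G_0\times G_0)\cup(G_1\times G_1)$ and also PST between a (possibly different) pair $(g_{p'},g_{q'})\in (G_0\times G_1)\cup(G_1\times G_0)$. I then plan to apply Lemma \ref{PSTiff} to each of these pairs and extract mutually exclusive constraints on the real number $t(\lambda_2-\lambda_1)$, where $\lambda_1,\lambda_2$ are the two eigenvalues associated with the principal character $\chi_1$ via formula \eqref{eqn:eigenvalues}.

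The decisive structural observation is that the hypothesis $T\neq\emptyset$ makes the index $k=1$ "visible" to both cases of Lemma \ref{PSTiff} simultaneously. Indeed, the principal character satisfies $\chi_1(T)=|T|\geq 1$, so in particular $\chi_1(T)\neq 0$. Consequently, applying condition 2 of Lemma \ref{PSTiff} to the same-part pair at $k=1$ puts us in the branch "$\chi_k(T)\neq 0$", which forces
\[
t(\lambda_2-\lambda_1)\in\{2z\pi\mid z\in\ZZ\}.
\]
On the other hand, applying condition 3 of Lemma \ref{PSTiff} to the cross-part pair at $k=1$ (condition 3 is asserted for every $k\in\{1,\dots,n\}$) forces
\[
t(\lambda_2-\lambda_1)\in\{(2z+1)\pi\mid z\in\ZZ\}.
\]
These two sets are disjoint, which yields the contradiction and thus proves that no two PSTs at the same time $t$ can straddle the two types.

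The proof is therefore essentially a one-line consequence of Lemma \ref{PSTiff} once the right test index $k$ is chosen. I expect no real obstacle: the only thing to double-check is the bookkeeping that $k=1$ legitimately appears in both conditions 2 and 3 of Lemma \ref{PSTiff}, which is immediate from the statement (condition 2 is quantified over all $k$ for which $\chi_k(T)\neq 0$, and condition 3 is quantified over all $k$). No further representation-theoretic input or eigenvalue manipulation is needed; in particular, the role of $T\neq\emptyset$ is pinpointed as guaranteeing $\chi_1(T)\neq 0$, without which condition 2 would be silent at $k=1$ and the argument would collapse.
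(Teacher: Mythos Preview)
Your proposal is correct and follows essentially the same argument as the paper: both pick the test index $k=1$, use $T\neq\emptyset$ to ensure $\chi_1(T)=|T|\neq 0$, and then observe via Lemma~\ref{PSTiff} that a same-part PST forces $t(\lambda_2-\lambda_1)\in 2\pi\ZZ$ while a cross-part PST forces $t(\lambda_2-\lambda_1)\in(2\ZZ+1)\pi$, which are disjoint.
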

	
\begin{proof} Note that the two conditions $t(\lambda_{2}-\lambda_{1})\in\{2z\pi\mid z\in \ZZ\}$ and $t(\lambda_{2}-\lambda_{1})\in\{(2z+1)\pi\mid z\in \ZZ\}$ cannot hold at the same time $t$. Since $T\neq \emptyset$, we have $\chi_1(T)\neq 0$. Therefore, by Lemma \ref{PSTiff}, pairs of vertices having PST at the time $t$ in $\Gamma$ are either all from $(G_0\times G_0)\cup(G_1\times G_1)$, or all from $(G_0\times G_1)\cup (G_1\times G_0)$. \end{proof}
	
\begin{core}\label{core-8-11}
Let $\Gamma$ be a bi-Cayley graph over a finite abelian group $G$. If $\Gamma$ has PST between vertices $g_p$ and $g_q$ with $(g_p,g_q)\in (G_0\times G_0)\cup (G_1\times G_1)$ and $g_p\neq g_q$, then the order of $g_pg_q^{-1}$ is $2$, which implies that $|G|$ is even.
\end{core}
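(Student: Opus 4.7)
The plan is to invoke part 1) of Lemma \ref{PSTiff} for the case $(g_p,g_q)\in (G_0\times G_0)\cup (G_1\times G_1)$, which tells us that
$$\chi_k(g_pg_q^{-1})=\pm 1 \quad \text{for every }k\in\{1,2,\ldots,n\}.$$
Set $h:=g_pg_q^{-1}$. Since $g_p\neq g_q$, we have $h\neq e$, so it suffices to show that $h^2=e$.

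Squaring the above identity gives $\chi_k(h)^2=1$ for all $k$, and since each $\chi_k$ is a homomorphism, $\chi_k(h^2)=\chi_k(h)^2=1$ for all $k$. In other words, $h^2$ lies in the kernel of every character of the finite abelian group $G$. A standard fact from the representation theory of finite abelian groups (see Section 2.1 of the paper) is that the characters separate points, i.e.
$$\bigcap_{k=1}^{n}\ker\chi_k=\{e\},$$
which follows immediately from the orthogonality relations: if $x\neq e$, then $\sum_{k=1}^{n}\chi_k(x)=0$, forcing some $\chi_k(x)\neq 1$. Applying this to $x=h^2$ yields $h^2=e$, so $h$ has order exactly $2$. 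By Lagrange's theorem, $2\mid |G|$, so $|G|$ is even.

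There is essentially no obstacle here; the only subtlety is recognizing that part 1) of Lemma \ref{PSTiff} already does all the heavy lifting and that the remainder is a one-line application of character orthogonality. The proof is thus very short and purely group-theoretic once the lemma is available.
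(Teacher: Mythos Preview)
Your proof is correct and follows essentially the same approach as the paper: both invoke part~1) of Lemma~\ref{PSTiff} to obtain $\chi_k(g_pg_q^{-1})=\pm 1$ for all $k$, and then finish with a one-line character-theoretic argument. The only cosmetic difference is in that last step: the paper uses the duality $\chi_{g_pg_q^{-1}}(g_k)=\chi_k(g_pg_q^{-1})$ to conclude that the character $\chi_{g_pg_q^{-1}}$ has order $2$ in $\hat G\cong G$, whereas you square to get $\chi_k(h^2)=1$ and invoke the fact that characters separate points; these are equivalent formulations of the same elementary fact.
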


\begin{proof} By Lemma \ref{PSTiff}, when $(g_p,g_q)\in (G_0\times G_0)\cup (G_1\times G_1)$, $\chi_k(g_pg_q^{-1})=\pm 1$ for each $1\leq k\leq |G|$. Due to $\chi_{g_pg_q^{-1}}(g_k)=\chi_{k}(g_pg_q^{-1})$, we have $\chi_{g_pg_q^{-1}}(g_k)=\pm1$ for each $1\leq k\leq |G|$. Thus if $g_p\neq g_q$, $\chi_{g_pg_q^{-1}}$ is of order $2$. It follows that the order of $g_pg_q^{-1}$ is also $2$. \end{proof}

As a special case of Lemma 3.2 in \cite{zf1}, we can see that a bi-Cayley graph $\Gamma=\mathrm{BiCay}(G;R,L,T)$ with $R=L$ over an abelian group $G$ is isomorphic a Cayley graph over a generalized dihedral group. This yields the following corollary. Here we give a proof independent of  \cite[Lemma 3.2]{zf1}.

\begin{core}\label{core:GDn}
Let $\Gamma=\mathrm{BiCay}(G;R,L,T)$ be a bi-Cayley graph over a finite abelian group $G$. If $\Gamma$ has PST between vertices $g_p$ and $g_q$ with $(g_p,g_q)\in (G_0\times G_1)\cup (G_1\times G_0)$, then $\Gamma$ is a Cayley graph over a generalized dihedral group.
\end{core}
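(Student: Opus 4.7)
My plan is to construct an explicit graph isomorphism from $\Gamma$ onto a Cayley graph over the generalized dihedral group $\mathrm{Dih}(G) := G \rtimes \langle \sigma \rangle$, where $\sigma$ has order $2$ and satisfies $\sigma a \sigma = a^{-1}$ for every $a \in G$. The first step is to apply Lemma \ref{PSTiff}(3), whose PST hypothesis forces $R = L$; this is the only place PST is used. From here on the argument becomes purely structural and in fact shows that \emph{every} bi-Cayley graph $\mathrm{BiCay}(G; R, R, T)$ over an abelian $G$ is a Cayley graph over $\mathrm{Dih}(G)$.

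I would then set $\varphi : V(\Gamma) \to \mathrm{Dih}(G)$ by $\varphi(g_0) = g$ and $\varphi(g_1) = g^{-1}\sigma$, and propose the connection set $S := R \cup T^{-1}\sigma$. Verifying that $S$ is a legitimate Cayley connection set is immediate: $e \notin R$ by assumption and $\sigma \notin G \supseteq T^{-1}$, so $e \notin S$; and $(T^{-1}\sigma)^{-1} = \sigma T = T^{-1}\sigma$ by the relation $\sigma t = t^{-1}\sigma$, which combined with $R^{-1} = R$ gives $S^{-1} = S$. The core of the proof is then a three-case edge-check. For a right edge $\{h_0, g_0\}$ with $gh^{-1} \in R$, one has $\varphi(g_0)\varphi(h_0)^{-1} = gh^{-1} \in R \subseteq S$. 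For a left edge $\{h_1, g_1\}$ with $gh^{-1} \in L = R$, a short computation using $(h^{-1}\sigma)^{-1} = \sigma h$ yields $\varphi(g_1)\varphi(h_1)^{-1} = g^{-1}\sigma \cdot \sigma h = g^{-1}h \in R$, employing $R = R^{-1}$ and the abelianness of $G$. For a spoke $\{h_0, g_1\}$ with $gh^{-1} \in T$, we get $\varphi(g_1)\varphi(h_0)^{-1} = g^{-1}\sigma h^{-1} = g^{-1} h \sigma = (gh^{-1})^{-1}\sigma \in T^{-1}\sigma$ via $\sigma h^{-1} = h\sigma$. All three equivalences are reversible, so $\varphi$ realizes the desired isomorphism $\Gamma \cong \mathrm{Cay}(\mathrm{Dih}(G),\, R \cup T^{-1}\sigma)$.

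The only point that requires real care, and the main obstacle to a purely mechanical verification, is the twist $\varphi(g_1) = g^{-1}\sigma$ rather than the naive $g\sigma$. In the Cayley convention of Section \ref{sec:Cayley}, neighbors of $x$ are $Hx$, so the regular action of $\mathrm{Dih}(G)$ on itself is by right multiplication; under the naive identification, right multiplication by $a \in G$ would send $g\sigma \mapsto g\sigma a = (ga^{-1})\sigma$, which conflicts with the bi-Cayley $G$-action $a \cdot g_1 = (ag)_1$. The inversion in $\varphi(g_1) = g^{-1}\sigma$ repairs this mismatch without any constraint on $T$, and the hypothesis $R = L$ is then exactly what promotes right multiplication by $\sigma$ to a graph automorphism of $\Gamma$.
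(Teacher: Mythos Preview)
Your proof is correct and essentially coincides with the paper's. The paper applies Lemma~\ref{PSTiff}(3) to obtain $R=L$, then declares the isomorphism $\mathrm{BiCay}(G;R,R,T)\cong\mathrm{Cay}(\tilde G,\,R\cup bT)$ via $g_0\mapsto g$, $g_1\mapsto bg$, where $\tilde G=\langle G,b\mid b^2=1,\ bgb=g^{-1}\rangle$. Since $bg=g^{-1}b$ and $bT=T^{-1}b$ in $\tilde G$, your map $\varphi(g_1)=g^{-1}\sigma$ and connection set $R\cup T^{-1}\sigma$ are \emph{literally the same data} written on the other side of $b$; the ``twist'' you highlight is invisible in the paper's $bg$ notation, and your careful three-case edge check simply fills in the verification that the paper leaves to the reader.
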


\begin{proof} By Lemma \ref{PSTiff}(3), if $\Gamma=\mathrm{BiCay}(G;R,L,T)$ over a finite abelian group $G$ has PST between vertices $g_p$ and $g_q$ with $(g_p,g_q)\in (G_0\times G_1)\cup (G_1\times G_0)$, then $R=L$. Let the vertex set of $\Gamma$ be the union of the right part $G_0=\{g_0\mid g\in G\}$ and the left part $G_1=\{g_1\mid g\in G\}$. Let $\tilde{G}=\left\langle G,b\mid b^2=1, bgb=g^{-1}, g\in G\right\rangle$ be the generalized dihedral group over $G$ and $S=R\cup bT\subseteq \tilde{G}$, where $bT=\{bt\mid t\in T\}$. Then $\mathrm{BiCay}(G;R,L,T)$ is isomorphic to $\mathrm{Cay}(\tilde{G},S)$ by using the vertex mapping $g_0\mapsto g$ and $g_1\mapsto bg$ for all $g\in G$. \end{proof}

\subsection{The case of $(g_p,g_q)\in (G_0\times G_1)\cup (G_1\times G_0)$}\label{sec:0-1}

Lemma \ref{PSTiff} provides a necessary and sufficient condition for an abelian bi-Caylay graph having PST. For ease of use, we shall refine Lemma \ref{PSTiff}. This subsection examines the case of $(g_p,g_q)\in (G_0\times G_1)\cup (G_1\times G_0)$.

We need the notation of the 2-adic exponential valuation of rational numbers which is a mapping defined by
$$v_2:\mathbb{Q}\longrightarrow \ZZ\cup \{\infty\},~v_2(0)=\infty,~v_2(2^l\frac{a}{b})=l,~where ~a,b,l\in \ZZ~and~2\nmid ab.$$
We assume that $\infty + \infty= \infty + l=\infty$ and $\infty\ge l$ for any $l\in \ZZ$.
Then $v_2$ has the following properties.
For $\beta,\beta'\in \mathbb{Q}$,
\begin{itemize}
	\item [(P1)] $v_2(\beta \beta')=v_2(\beta)+v_2(\beta')$;
	\item [(P2)] $v_2(\beta + \beta')\ge \min(v_2(\beta),v_2(\beta')), \mbox{ and the equality
		holds if }v_2(\beta)\neq v_2(\beta').$
\end{itemize}

A graph is said to be {\em integral} if the eigenvalues of its adjacency matrix are all integers. This is equivalent to saying that all eigenvalues are rational numbers, since they are algebraic integers.

\begin{lem}\label{lem:integral2}
	Let $\Gamma=\mathrm{BiCay}(G;R,L,T)$ be a bi-Cayley graph over an abelian group $G$ of order $n$.
	Assume that $\Gamma$ has PST between vertices $g_p$ and $g_q$ with $(g_p,g_q)\in (G_0\times G_1)\cup (G_1\times G_0)$ at the time $t$. Then
	\begin{itemize}
		\item [$1)$] $\Gamma$ is an integral graph;
		\item [$2)$] $\chi_k(T)\in \mathbb{Q}$ and $v_2(|\chi_k(T)|)=v_2(|T|)$ for each $1\leq k\leq n$.
	\end{itemize}	
\end{lem}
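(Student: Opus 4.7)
The plan is to invoke Lemma \ref{PSTiff}(3) to unlock the structural consequences of PST between vertices in $(G_0\times G_1)\cup(G_1\times G_0)$: the hypothesis forces $R=L$, $\chi_k(T)\neq 0$ for all $k$, together with the two time-quantization conditions $t(\lambda_{2k}-\lambda_{2k-1})\in(2\ZZ+1)\pi$ and $t(\lambda_2-\lambda_{2k})\in\ZZ\pi$. Because $R=L$, the eigenvalue formula in \eqref{eqn:eigenvalues} collapses to $\lambda_{2k-j}=\chi_k(R)+(-1)^j|\chi_k(T)|$, so $\lambda_{2k}-\lambda_{2k-1}=2|\chi_k(T)|$; at $k=1$ this reads $\lambda_2-\lambda_1=2|T|$ since the principal character gives $\chi_1(T)=|T|$.

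Next I would solve the $k=1$ quantization for $t$, obtaining $t=(2z_1+1)\pi/(2|T|)$ for some $z_1\in\ZZ$, and substitute this back into the general-$k$ quantization $2t|\chi_k(T)|=(2z_k+1)\pi$. This yields
$$\frac{|\chi_k(T)|}{|T|}=\frac{2z_k+1}{2z_1+1}\in\mathbb{Q},$$
which already proves rationality of $|\chi_k(T)|$. Applying the $2$-adic valuation and using property (P1) together with $v_2(2z_k+1)=v_2(2z_1+1)=0$ gives $v_2(|\chi_k(T)|)=v_2(|T|)$.

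For integrality of $\Gamma$, I would substitute the same expression for $t$ into $t(\lambda_2-\lambda_{2k})\in\ZZ\pi$; this forces $\lambda_2-\lambda_{2k}$ to be rational. Since $\lambda_2=|R|+|T|\in\ZZ$, it follows that $\lambda_{2k}\in\mathbb{Q}$, and being an algebraic integer (an eigenvalue of the integer matrix $D$) it must lie in $\ZZ$. The same argument handles $\lambda_{2k-1}$, and these exhaust the spectrum of $D$. As a byproduct, $\chi_k(R)=(\lambda_{2k}+\lambda_{2k-1})/2$ and $|\chi_k(T)|=(\lambda_{2k}-\lambda_{2k-1})/2$ are rational algebraic integers (the latter since $|\chi_k(T)|^2=\chi_k(T)\chi_{k^{-1}}(T)$ is an algebraic integer), so both lie in $\ZZ$.

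The main obstacle I anticipate is the upgrade from rationality of $|\chi_k(T)|$ to rationality of $\chi_k(T)$ itself. To attack this I would couple condition (1) of Lemma \ref{PSTiff}, giving $\chi_k(g_pg_q^{-1})=\pm|\chi_k(T)|/\chi_k(T)$, with its counterpart at the character $\chi_{k^{-1}}$, exploiting $\chi_{k^{-1}}(T)=\overline{\chi_k(T)}$ and $\lambda_{2k^{-1}}=\lambda_{2k}$ so that the two $\pm$ signs coincide. The consistency of these identities across $k$ should pin down the phase $\chi_k(T)/|\chi_k(T)|$ as $\pm 1$, because it must simultaneously be a root of unity and compatible with the integrality of $\chi_k(R)$ and $|\chi_k(T)|$; ruling out non-real phases is the delicate step, since rationality of $|\chi_k(T)|$ by itself only constrains $\chi_k(T)$ to a circle of integer radius.
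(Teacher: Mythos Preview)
Your argument for integrality and for $v_2(|\chi_k(T)|)=v_2(|T|)$ is essentially the paper's: write $t=\pi F$, use the $k=1$ quantization $2F|T|\in 2\ZZ+1$ to get $F\in\mathbb{Q}$, then feed this rationality through the remaining conditions $F(\lambda_2-\lambda_{2k})\in\ZZ$ and $F(\lambda_{2k}-\lambda_{2k-1})\in 2\ZZ+1$ (with $\lambda_2=|R|+|T|\in\ZZ$) to conclude $\lambda_{2k},\lambda_{2k-1}\in\mathbb{Q}$, hence in $\ZZ$ as algebraic integers; the $2$-adic statement drops out of $v_2(2F|\chi_k(T)|)=0$ for every $k$.

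Regarding your anticipated obstacle---the upgrade from $|\chi_k(T)|\in\mathbb{Q}$ to $\chi_k(T)\in\mathbb{Q}$---the paper's own proof does \emph{not} address it: it establishes only $|\chi_k(T)|\in\mathbb{Q}$ and stops there, and the downstream Theorem~\ref{PSTiffG0G1} only ever uses $|\chi_k(T)|$ and its $2$-adic valuation. Whether the stronger claim $\chi_k(T)\in\mathbb{Q}$ is forced is essentially the question of whether PST implies $T=T^{-1}$ (which would make $\chi_k(T)$ real), and the paper explicitly lists this as an open problem in its concluding remarks. So you should regard that clause as a slight over-statement of what is actually proved and needed, and not invest further effort in it.
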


\begin{proof} 1) For each $1\leq k\leq n$, by Lemma \ref{PSTiff}, $t(\lambda_{2k}-\lambda_{2k-1})\in \{(2z+1)\pi\mid z\in \ZZ\}$. Let $t=\pi F$. Then
\begin{align}\label{eqn:10-9}
F(\lambda_{2k}-\lambda_{2k-1})\in \{2z+1\mid z\in \ZZ\}.
\end{align}
Since Lemma \ref{PSTiff} yields $R=L$, by \eqref{eqn:eigenvalues}, $\lambda_{2k}-\lambda_{2k-1}=2|\chi_k(T)|$, so
\begin{align}\label{eqn:10-9-1}
2F|\chi_k(T)|\in \{2z+1\mid z\in \ZZ\}.
\end{align}
Taking $k=1$ in \eqref{eqn:10-9-1}, we have $2F|T|\in \ZZ$, which implies $F\in \mathbb{Q}$. Furthermore, by Lemma \ref{PSTiff}, $t(\lambda_{2}-\lambda_{2k})\in \{z\pi\mid z\in\ZZ\}$ for each $1\leq k\leq n$. Due to $t=\pi F$, $F(\lambda_{2}-\lambda_{2{k}})\in \ZZ$. By \eqref{eqn:eigenvalues}, $\lambda_{2}=|R|+|T|\in \ZZ$, so $\lambda_{2{k}}\in \mathbb{Q}$, which yields $\lambda_{2{k}}\in \ZZ$. Then apply \eqref{eqn:10-9} to obtain $\lambda_{2{k}-1}\in \ZZ$. Therefore, $\Gamma$ is an integral graph.

2) Since $\lambda_{2k}-\lambda_{2k-1}=2|\chi_k(T)|\in \ZZ$, we have $|\chi_k(T)|\in \mathbb{Q}$. By \eqref{eqn:10-9-1}, $v_2(2F|\chi_k(T)|)=0$. It follows that $v_2(|\chi_k(T)|)=-1-v_2(F)$ for each $1\leq k\leq n$. Therefore, $v_2(|\chi_k(T)|)=v_2(|\chi_1(T)|)=v_2(|T|)$.
\end{proof}

For convenience, we set $2\ZZ=\{2z\mid z\in \ZZ\}$ and $2\ZZ+1=\{2z+1\mid z\in \ZZ\}$. The following lemma is simple but very useful.

\begin{lem}\label{lem:number theory}
Let $x$ be a rational number and let $d_1,d_2,\ldots,d_n$ be nonzero integers such that $\gcd(d_1,d_2,\cdots,d_n)=1$. Then
\begin{itemize}
	\item [$(1)$] $xd_k\in \ZZ$ for each $1\leq k\leq n$ if and only if $x\in \ZZ$;
	\item [$(2)$] $xd_k\in 2\ZZ$ for each $1\leq k\leq n$ if and only if $x\in 2\ZZ$;
	\item [$(3)$] $xd_k\in 2\ZZ+1$ for each $1\leq k\leq n$ if and only if $x\in 2\ZZ+1$ and $d_k\in 2\ZZ+1$ for each $1\leq k\leq n$.
\end{itemize}
\end{lem}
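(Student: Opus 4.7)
The lemma is a straightforward elementary number-theoretic fact, and my plan is to reduce all three parts to the first one, which in turn follows from writing $x$ in lowest terms.

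For part (1), I would write $x = a/b$ with $a \in \ZZ$, $b \in \ZZ_{>0}$ and $\gcd(a,b) = 1$. The hypothesis $xd_k \in \ZZ$ becomes $b \mid ad_k$, which together with $\gcd(a,b)=1$ yields $b \mid d_k$ for every $k$. Hence $b$ divides $\gcd(d_1,d_2,\ldots,d_n) = 1$, so $b=1$ and $x \in \ZZ$. The converse direction is trivial.

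For part (2), the condition $xd_k \in 2\ZZ$ in particular implies $xd_k \in \ZZ$, so part (1) gives $x \in \ZZ$. Now $xd_k$ is even for every $k$, i.e.\ $v_2(x) + v_2(d_k) \ge 1$ by property (P1). If $x$ were odd, then $v_2(d_k) \ge 1$ for all $k$, so $2 \mid \gcd(d_1,\ldots,d_n) = 1$, a contradiction. Therefore $x$ is even. The converse is immediate. Part (3) is similar: $xd_k \in 2\ZZ+1 \subset \ZZ$ forces $x \in \ZZ$ by (1), and the product of two integers is odd iff both factors are odd, so $x$ and each $d_k$ must be odd; again the converse direction is obvious.

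The only step requiring any care is the reduction in part (1), where the hypothesis $\gcd(d_1,\ldots,d_n)=1$ is used essentially; the other two parts are then immediate consequences of (1) combined with the trivial parity fact that a product of integers is even iff some factor is even. I do not expect any genuine obstacle here — the lemma is purely a packaging of basic divisibility facts in the form in which they will be invoked in later subsections (e.g., after \eqref{eqn:10-9} and \eqref{eqn:10-9-1} in Lemma \ref{lem:integral2}).
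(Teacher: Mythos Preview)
Your proof is correct, but it follows a different line from the paper's. The paper treats (1) and (2) simultaneously by computing the intersection $\bigcap_{k}\frac{1}{d_k}\alpha_\ZZ$ for $\alpha_\ZZ\in\{\ZZ,2\ZZ\}$ and simplifying via the identity $\mathrm{lcm}(d_1\cdots d_{k-1}d_{k+1}\cdots d_n\mid k)=\frac{d_1\cdots d_n}{\gcd(d_1,\ldots,d_n)}$; for (3) it uses the $2$-adic valuation to force all $d_k$ odd and then repeats the intersection argument. Your approach is more modular: you settle (1) by the lowest-terms argument and then reduce (2) and (3) to (1) plus the elementary parity observation that a product of integers is even iff some factor is even. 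Your route is shorter and avoids the lcm manipulation; the paper's route has the minor advantage of a uniform treatment of (1) and (2), and it keeps the $2$-adic valuation in play, which matches how the lemma is applied afterwards. Either argument is entirely adequate here.
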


\begin{proof} The sufficiency is straightforward. It suffices to examine the necessity. Let $\alpha_\ZZ\in \{\ZZ,2\ZZ\}$. If $xd_k\in \alpha_\ZZ$ for each $1\leq k\leq n$, then
\begin{align}\label{eqn:1119}
	x\in \bigcap_{1\leq k\leq n}\frac{1}{d_k}\alpha_\ZZ
	&=\bigcap_{1\leq k\leq n} \frac{1}{d_1d_2\cdots d_n}d_1\cdots d_{k-1}d_{k+1}\cdots d_n \alpha_\ZZ\\
	&=\frac{1}{d_1d_2\cdots d_n}\cdot {\rm lcm}(d_1\cdots d_{k-1}d_{k+1}\cdots d_n\mid 1\leq k\leq n)\cdot\alpha_\ZZ\notag\\
	&=\frac{1}{\gcd(d_1,d_2,\cdots d_n)}\cdot\alpha_\ZZ
	=\alpha_\ZZ.\notag
\end{align}
If $xd_k\in 2\ZZ+1$ for each $1\leq k\leq n$, then $v_2(xd_k)=0$, and so $v_2(d_k)=-v_2(x)$, which implies that $v_2(d_k)=v_2(d_{k'})$ for any $k\neq k'$. Due to $\gcd(d_1,d_2,\cdots,d_n)=1$, we have $d_k\in 2\ZZ+1$ for each $1\leq k\leq n$. Thus $$\bigcap_{1\leq k\leq n} d_1\cdots d_{k-1}d_{k+1}\cdots d_n (2\ZZ+1)={\rm lcm}(d_1\cdots d_{k-1}d_{k+1}\cdots d_n\mid 1\leq k\leq n)\cdot(2\ZZ+1).$$
Using the the same argument as that in \eqref{eqn:1119}, we have $x\in 2\ZZ+1$.
\end{proof}

\begin{thm}\label{PSTiffG0G1}
Let $\Gamma=BiCay(G;R,L,T)$ be a bi-Cayley graph over an abelian group $G$ of order $n$. Then $\Gamma$ has PST between vertices $g_p$ and $g_q$ with $(g_p,g_q)\in (G_0\times G_1)\cup (G_1\times G_0)$ at the time $t$ if and only if the following conditions hold, where $\lambda_1,\lambda_2,\ldots,\lambda_{2n}$ are given in Lemma $\ref{lem:eigenvalues and eigenvectors of Dhat}$, $M=\gcd(\lambda_{2}-\lambda_{2{k}}\mid 1\leq k\leq n)$ and $M_T=\gcd(|\chi_k(T)|\mid 1\leq k\leq n)$.
\begin{itemize}
\item [$1)$] $\Gamma$ is an integral graph and $R=L$.		
\item [$2)$] For each $1\leq k\leq n$,
    \begin{itemize}
    \item [$2.1)$] $\chi_{k}({g_p}{g_q}^{-1})=\left\{
			\begin{array}{ll}
			\frac{|\chi_k(T)|}{\chi_k(T)}, & {\rm{if}}\ (g_p,g_q)\in G_0\times G_1\  {\rm{and}} \ \exp(\rm{i}t(\lambda_{2}-\lambda_{2{k}}))=1 ;\\[3mm]

            -\frac{|\chi_k(T)|}{\chi_k(T)}, & {\rm{if}}\ (g_p,g_q)\in G_0\times G_1\  {\rm{and}} \ \exp(\rm{i}t(\lambda_{2}-\lambda_{2{k}}))=-1 ;\\[3mm]

             \frac{|\chi_k(T)|}{\overline{\chi_k(T)}}, & {\rm{if}}\ (g_p,g_q)\in G_1\times G_0\  {\rm{and}} \ \exp(\rm{i}t(\lambda_{2}-\lambda_{2{k}}))=1 ;\\[3mm]

            -\frac{|\chi_k(T)|}{\overline{\chi_k(T)}}, & {\rm{if}}\ (g_p,g_q)\in  G_1\times G_0\  {\rm{and}} \ \exp(\rm{i}t(\lambda_{2}-\lambda_{2{k}}))=-1;
			\end{array}
			\right.$
    \item [$2.2)$] $\chi_k(T)\neq 0$;
    \item [$2.3)$] $v_2(|\chi_k(T)|)=v_2(|T|)$.
    \end{itemize}
\item [$3)$] If $M>0$, then $v_2(M)> v_2(|T|)$.
\item [$4)$] $t\in\{\frac{(1+2z)\pi}{\gcd(2M_T,M)}\mid z\in\ZZ\}$.
\end{itemize}
\end{thm}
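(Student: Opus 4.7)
The plan is to derive the theorem from Lemma \ref{PSTiff}(3), Lemma \ref{lem:integral2}, and Lemma \ref{lem:number theory}, splitting into necessity and sufficiency.

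For necessity, assume $\Gamma$ has PST between $g_p$ and $g_q$ with $(g_p,g_q)\in(G_0\times G_1)\cup(G_1\times G_0)$ at time $t$. Lemma \ref{PSTiff}(3) immediately yields $R=L$, $\chi_k(T)\neq 0$ for every $k$, and the two timing conditions $t(\lambda_{2k}-\lambda_{2k-1})\in(2\ZZ+1)\pi$ and $t(\lambda_2-\lambda_{2k})\in\ZZ\pi$, and its displayed formula for $\chi_k(g_pg_q^{-1})$ is exactly 2.1. Lemma \ref{lem:integral2} then gives integrality of $\Gamma$ (completing 1) and $v_2(|\chi_k(T)|)=v_2(|T|)$ (which is 2.3). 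So the remaining work is to produce 3 and 4 from the timing conditions.

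Write $t=\pi F$ with $F\in\mathbb{Q}$ (as in the proof of Lemma \ref{lem:integral2}), and use $R=L$ together with \eqref{eqn:eigenvalues} to rewrite the timing conditions as
\begin{align*}
2F|\chi_k(T)|\in 2\ZZ+1\qquad\text{and}\qquad F(\lambda_2-\lambda_{2k})\in\ZZ,\qquad 1\le k\le n.
\end{align*}
Setting $|\chi_k(T)|=M_T d_k$ with $\gcd(d_k)=1$ and applying Lemma \ref{lem:number theory}(3) to the first family yields that each $d_k$ is odd and $2FM_T\in2\ZZ+1$, so $v_2(F)=-1-v_2(M_T)=-1-v_2(|T|)$. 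Applying Lemma \ref{lem:number theory}(1) to the second family (after dividing by $M$ when $M>0$) yields $FM\in\ZZ$, whence $v_2(M)\ge -v_2(F)=v_2(|T|)+1$, proving 3. For 4, the relations $2FM_T\in2\ZZ+1$ and $FM\in\ZZ$ force the reduced denominator of $F$ to divide both $2M_T$ and $M$, hence to divide $g:=\gcd(2M_T,M)$, so $F=c/g$ for some $c\in\ZZ$; the $2$-adic computation $v_2(g)=\min(1+v_2(M_T),v_2(M))=1+v_2(|T|)$ (using 3) combined with $v_2(2FM_T)=0$ forces $v_2(c)=0$, so $c$ is odd and $t=(1+2z)\pi/g$ as required.

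For sufficiency, assume 1--4 and run the previous chain backward. Writing $t=(1+2z)\pi/g$ with $g=\gcd(2M_T,M)$ and using the integrality supplied by 1 together with $R=L$, one checks via the same $2$-adic bookkeeping that $g\mid 2|\chi_k(T)|$ with $2|\chi_k(T)|/g$ odd (using 2.3 and the computation of $v_2(g)$), which gives $t(\lambda_{2k}-\lambda_{2k-1})\in(2\ZZ+1)\pi$; and that $g\mid(\lambda_2-\lambda_{2k})$ (since $M\mid(\lambda_2-\lambda_{2k})$ and $g\mid M$), which gives $t(\lambda_2-\lambda_{2k})\in\ZZ\pi$. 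Condition 2.1 is then precisely the character identity required in Lemma \ref{PSTiff}(3), so that lemma yields PST. The main technical obstacle is the clean $2$-adic bookkeeping that links $v_2(M_T)$, $v_2(M)$, and $v_2(g)$, particularly the boundary case $M=0$ (interpreted via $\gcd(2M_T,0)=2M_T$), which must be treated consistently throughout both directions.
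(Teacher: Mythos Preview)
Your proposal is correct and follows essentially the same route as the paper: both derive conditions 1) and 2) from Lemmas \ref{PSTiff} and \ref{lem:integral2}, then reduce the two timing conditions $t(\lambda_{2k}-\lambda_{2k-1})\in(2\ZZ+1)\pi$ and $t(\lambda_2-\lambda_{2k})\in\ZZ\pi$ via Lemma \ref{lem:number theory} and $2$-adic bookkeeping to the gcd conditions 3) and 4), handling the boundary case $M=0$ through $\gcd(2M_T,0)=2M_T$. The only cosmetic difference is that the paper phrases the final step as an explicit computation of the intersection $\frac{\pi}{2M_T}(2\ZZ+1)\cap\frac{\pi}{M}\ZZ$ (deriving 3) as the nonemptiness condition and 4) as the resulting set), whereas you argue directly with the reduced denominator of $F=t/\pi$.
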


\begin{proof} By Lemma \ref{PSTiff}, $\Gamma$ has PST between vertices $g_p$ and $g_q$ with $(g_p,g_q)\in (G_0\times G_1)\cup (G_1\times G_0)$ at the time $t$ if and only if $R=L$ and for each $1\leq k\leq n$,
\begin{itemize}
	\item [$i)$] $\chi_{k}({g_p}{g_q}^{-1})=\left\{
		\begin{array}{ll}
			\frac{|\chi_k(T)|}{\chi_k(T)}, & {\rm{if}}\ (g_p,g_q)\in G_0\times G_1\  {\rm{and}} \ \exp(\rm{i}t(\lambda_{2}-\lambda_{2{k}}))=1 ;\\[3mm]
			
			-\frac{|\chi_k(T)|}{\chi_k(T)}, & {\rm{if}}\ (g_p,g_q)\in G_0\times G_1\  {\rm{and}} \ \exp(\rm{i}t(\lambda_{2}-\lambda_{2{k}}))=-1 ;\\[3mm]
			
			\frac{|\chi_k(T)|}{\overline{\chi_k(T)}}, & {\rm{if}}\ (g_p,g_q)\in G_1\times G_0\  {\rm{and}} \ \exp(\rm{i}t(\lambda_{2}-\lambda_{2{k}}))=1 ;\\[3mm]
			
			-\frac{|\chi_k(T)|}{\overline{\chi_k(T)}}, & {\rm{if}}\ (g_p,g_q)\in  G_1\times G_0\  {\rm{and}} \ \exp(\rm{i}t(\lambda_{2}-\lambda_{2{k}}))=-1;
		\end{array}
		\right.$
	\item [$ii)$] $\chi_k(T)\neq 0$;
	\item [$iii)$] $t(\lambda_{2k}-\lambda_{2k-1})\in \{(2z+1)\pi\mid z\in \ZZ\}$;
	\item [$iv)$] $t(\lambda_{2}-\lambda_{2k})\in \{z\pi\mid z\in \ZZ\}$.
\end{itemize}

If $\Gamma$ has PST between vertices $g_p$ and $g_q$ with $(g_p,g_q)\in (G_0\times G_1)\cup (G_1\times G_0)$ at the time $t$, then Lemma \ref{PSTiff} yields Conditions $2.1)$ and $2.2)$, Lemmas \ref{PSTiff} and \ref{lem:integral2} yields Condition $1)$, and Lemma \ref{lem:integral2} yields Condition $2.3)$. Thus to complete the proof, it suffices to show, under the assumption of Conditions $1)$ and $2)$, the equivalence between Conditions $iii)$, $iv)$ and Conditions $3)$, $4)$.

Since $R=L$, by Lemma \ref{lem:eigenvalues and eigenvectors of Dhat}, for each $1\leq k\leq n$,  $\lambda_{2k}-\lambda_{2k-1}=2|\chi_k(T)|$. Since $\Gamma$ is an integral graph, $2|\chi_k(T)|\in \ZZ$. By Condition $2.3)$, $v_2(|\chi_k(T)|)=v_2(|T|)\geq 0$, and so $|\chi_k(T)|\in \ZZ$. By Condition $2.2)$, $\chi_k(T)\neq 0$ for each $1\leq k\leq n$, and we can define
$$M_T=\gcd(|\chi_k(T)|\mid 1\leq k\leq n),$$
which is a positive integer. We will show that, under the assumption of Conditions $1)$ and $2)$, Condition $iii)$ is equivalent to
\begin{equation}\label{eqn:10-10}
t\in \{\frac{\pi}{2M_T}(2z+1)\mid z\in \ZZ\}.
\end{equation}
Let $|\chi_k(T)|=M_Td_k$ for some positive integer $d_k$.
By Condition $iii)$, $t(\lambda_{2k}-\lambda_{2k-1})=t\cdot2|\chi_k(T)|=t\cdot2M_Td_k\in \{(2z+1)\pi\mid z\in \ZZ\}$.
Clearly $\gcd(d_1,d_2,\ldots,d_n)=1$, and Condition $2.3)$ implies that $d_k$ is odd for each $1\leq k\leq n$. Thus setting $t=\pi F$ and applying Lemma \ref{lem:number theory}, one can see that Condition $iii)$ is equivalent to \eqref{eqn:10-10}.

Now consider Condition $(iv)$, i.e., $t(\lambda_{2}-\lambda_{2k})\in \{z\pi\mid z\in \ZZ\}$. Due to $R=L$, by Lemma \ref{lem:eigenvalues and eigenvectors of Dhat}, $\lambda_{2}-\lambda_{2{k}}=|R|-\chi_k(R)+|T|-|\chi_k(T)|$. Since $\chi_k(R)\leq |R|$,  $|\chi_k(T)|\leq |T|$ and $\Gamma$ is an integral graph, $\lambda_{2}-\lambda_{2{k}}$ is a nonnegative integer. So we can define
$$
	M=\gcd(\lambda_{2}-\lambda_{2{k}}\mid 1\leq k\leq n).
$$

If $\lambda_{2}\neq \lambda_{2{k}}$ for some $1\leq k\leq n$, then $M$ is a positive integer. By a similar argument to that for \eqref{eqn:10-10}, one can see that Condition $iv)$ is equivalent to
\begin{equation}\label{eqn:10-10-1}
t\in \{\frac{\pi}{M}z\mid z\in \ZZ\}.
\end{equation}
Combining \eqref{eqn:10-10} and \eqref{eqn:10-10-1}, we have
\begin{equation}\label{eqn:TG0G1}
	t\in\frac{\pi}{2M_T}(2\ZZ+1)\cap \frac{\pi}{M}\ZZ=\frac{\pi}{2M_TM}\big(M(2\ZZ+1)\cap 2M_T\ZZ\big).
\end{equation}
For every $z\in \ZZ$,
\begin{center}
\begin{align*}
	&~z\in M(2\ZZ+1)\cap 2M_T\ZZ\\
	\Longleftrightarrow &~z=(2x+1)M=2yM_T \text{ for some }x,y\in\ZZ\\
	\Longleftrightarrow &~2yM_T-2xM=M\\
	\Longleftrightarrow &~\gcd(2M_T,2M)\mid M\\
    \Longleftrightarrow &~v_2(M)> v_2(M_T)\geq 0. 	
\end{align*}
\end{center}
Since $M_T=\gcd(|\chi_k(T)|\mid 1\leq k\leq n)$, and by Condition 2.3), $v_2(|\chi_k(T)|)=v_2(|T|)$ for any $1\leq k\leq n$, we have $v_2(M_T)=v_2(|T|)$. Therefore,
$$v_2(M)> v_2(M_T)\geq 0 \Longleftrightarrow v_2(M)>v_2(|T|),$$
which yields Condition 3). On the other hand, $v_2(M)>v_2(M_T)$ implies $v_2(M)\geq 1+v_2(M_T)=v_2(2M_T)$. This implies that
$$M(2\ZZ+1)\cap 2M_T\ZZ={\rm lcm}(M,2M_T)\cdot (2\ZZ+1).$$
Hence by \eqref{eqn:TG0G1}, we have $$t\in\frac{\pi}{2M_TM} \cdot {\rm lcm}(M,2M_T)\cdot (2\ZZ+1)= \frac{\pi}{\gcd(2M_T,M)}(2\ZZ+1),$$
which yields Condition 4).
To summarize, if $\lambda_{2}\neq \lambda_{2{k}}$ for some $1\leq k\leq n$, then under the assumption of Conditions $1)$ and $2)$, Conditions $iii)$ and $iv)$ are equivalent to Conditions $3)$ and $4)$.

If $\lambda_{2}= \lambda_{2{k}}$ for every $1\leq k\leq n$, then Condition $iv)$ always holds and $M=0$. So $\frac{\pi}{\gcd(2M_T,M)}(2\ZZ+1)=\frac{\pi}{2M_T}(2\ZZ+1)$, which coincides with \eqref{eqn:10-10}. Therefore, if $\lambda_{2}= \lambda_{2{k}}$ for every $1\leq k\leq n$, then under the assumption of Conditions $1)$ and $2)$, Conditions $iii)$ and $iv)$ are equivalent to Conditions $3)$ and $4)$. \end{proof}

\subsection{The case of $(g_p,g_q)\in (G_0\times G_0)\cup (G_1\times G_1)$}\label{sec:0-0}

\begin{lem}\label{lem:integer0} {\rm \cite[Theorem 3.1]{PeriodicGraphs}}
A graph is periodic if and only if the ratio of its any two nonzero eigenvalues is rational.
\end{lem}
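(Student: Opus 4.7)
The plan is to argue via the spectral decomposition $A=\sum_j\mu_j F_j$, where $\mu_1,\ldots,\mu_r$ are the distinct eigenvalues of the adjacency matrix $A$ and $F_j$ are the orthogonal projectors onto the corresponding eigenspaces, so that $H(t)=\exp(-\mathrm{i}tA)=\sum_j e^{-\mathrm{i}t\mu_j}F_j$ is unitary and commutes with $A$.

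For the forward direction, suppose $\Gamma$ is periodic at some $t\neq 0$, so $|H_{u,u}(t)|=1$ for every vertex $u$. First I would observe that because each column of the unitary matrix $H(t)$ has $\ell^2$-norm $1$, the condition $|H_{u,u}(t)|=1$ forces $H_{v,u}(t)=0$ for $v\neq u$; hence $H(t)=\diag(c_u)_u$ is diagonal with $|c_u|=1$. Writing $H(t)A=AH(t)$ entrywise gives $c_u A_{u,v}=A_{u,v}c_v$, so $c_u=c_v$ whenever $uv$ is an edge, and consequently $c_u$ is constant on each connected component $C$; denote the common value by $c_C$, so $H(t)|_{V_C}=c_C I$. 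Because $A$ has zero diagonal, $\tr(A|_{V_C})=0$, and from $\det H(t)|_{V_C}=\exp(-\mathrm{i}t\,\tr(A|_{V_C}))=1$ I read off $c_C^{|V_C|}=1$, i.e., each $c_C$ is a root of unity. Choosing a positive integer $N$ with $c_C^N=1$ for every component $C$ gives $H(Nt)=I$, which by comparing spectral decompositions forces $e^{-\mathrm{i}Nt\mu_j}=1$ for every eigenvalue $\mu_j$ of $A$. Hence every eigenvalue lies in $(2\pi/(Nt))\ZZ$, so the ratio of any two nonzero eigenvalues is rational.

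For the converse, assume every ratio of two nonzero eigenvalues is rational. If $A=0$ then $H(t)=I$ for every $t$ and $\Gamma$ is trivially periodic, so suppose $A$ has a nonzero eigenvalue $\mu_1$. Writing each other distinct nonzero eigenvalue as $\mu_j=(p_j/q_j)\mu_1$ with $p_j\in\ZZ$ and $q_j\in\ZZ_{>0}$, let $M$ be a common multiple of the $q_j$ and set $t=2\pi M/\mu_1\neq 0$. Then $t\mu_j=2\pi M p_j/q_j\in 2\pi\ZZ$ for each nonzero $\mu_j$, while trivially $t\cdot 0\in 2\pi\ZZ$, so
$$H(t)=\sum_j e^{-\mathrm{i}t\mu_j}F_j=\sum_j F_j=I,$$
which gives $|H_{u,u}(t)|=1$ for every vertex $u$, i.e., $\Gamma$ is periodic.

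The main obstacle lies in the forward direction, specifically the upgrade of the pointwise hypothesis $|H_{u,u}(t)|=1$ to the structural identity $H(t)|_{V_C}=c_C I$ with $c_C$ a root of unity. The route above uses unitarity twice (first to force $H(t)$ to be diagonal, and then via the commutation with $A$ to propagate constancy of the diagonal along edges); the subsequent use of $\tr(A)=0$, i.e., the absence of loops, is what lets one pass from $H(t)=c_C I$ to the stronger $H(Nt)=I$ needed to read off rational ratios of nonzero eigenvalues. An alternative triangle-inequality route using the nonnegativity of $(F_j)_{u,u}$ and $\sum_j (F_j)_{u,u}=1$ is possible but less transparent than the unitarity-plus-commutation argument sketched above.
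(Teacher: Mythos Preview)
The paper does not supply a proof of this lemma; it is quoted verbatim from \cite[Theorem~3.1]{PeriodicGraphs} and used as a black box in the proof of Lemma~\ref{lem:period=integral}. So there is no ``paper's proof'' to compare against.

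Your argument is correct and self-contained. In the forward direction, unitarity of $H(t)$ forces $H(t)$ to be diagonal once every diagonal entry has modulus $1$; the commutation $H(t)A=AH(t)$ then propagates the diagonal value along edges, making it constant on each component; and the identity $\det\exp(M)=\exp(\tr M)$ together with $\tr(A|_{V_C})=0$ (no loops) shows each component value $c_C$ is a root of unity. Passing to $H(Nt)=H(t)^N=I$ then places every eigenvalue in the lattice $(2\pi/(Nt))\ZZ$, from which the rationality of ratios of nonzero eigenvalues is immediate. The converse is routine.

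One small comment: the determinant/trace step is where the hypothesis that $\Gamma$ is simple (hence $\tr A=0$) actually enters, and it is essential---without it one could have $H(t)=cI$ with $c$ an arbitrary unit complex number and no further leverage. Godsil's original treatment in \cite{PeriodicGraphs} is phrased at the level of a single vertex and proceeds via the nonnegativity of the diagonal entries $(F_j)_{u,u}$ of the spectral idempotents (what you call the ``triangle-inequality route''); your global argument via unitarity and commutation is a clean alternative well suited to the periodicity notion adopted in this paper.
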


\begin{lem}\label{lem:period=integral}
Let $\Gamma=\mathrm{BiCay}(G;R, L, T)$ be a bi-Cayley graph over a finite abelian group $G$.
Then $\Gamma$ is periodic if and only if $\Gamma$ is an integral graph.
\end{lem}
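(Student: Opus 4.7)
The plan is to invoke Lemma~\ref{lem:integer0}, which states that a graph is periodic if and only if every ratio of two of its nonzero eigenvalues is rational, together with the standard fact that a rational algebraic integer is an integer. The easy direction is immediate: if $\Gamma$ is integral, every eigenvalue of $D$ is an integer, so every ratio of two nonzero eigenvalues is rational and $\Gamma$ is periodic. For the converse I assume $\Gamma$ is periodic and aim to show that every eigenvalue $\lambda_{2k-j}$ from Theorem~\ref{thm:eigenvalues and eigenvectors of D} lies in $\mathbb{Z}$. Since $D$ is a $(0,1)$-matrix, each $\lambda_{2k-j}$ is already an algebraic integer, so it is enough to prove rationality.

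The main step is to analyse the pair $\lambda_1,\lambda_2$ corresponding to the principal character $\chi_1$. Applying \eqref{eqn:eigenvalues} with $k=1$ and using $\chi_1(R)=|R|$, $\chi_1(L)=|L|$, $\chi_1(T)=|T|$, these are the two roots of the monic integer quadratic
\[
x^2 - (|R|+|L|)\,x + (|R|\,|L|-|T|^2),
\]
so they are either both rational or a Galois conjugate pair of irrationals of the form $a\pm b\sqrt{d}$ with $b\neq 0$ and $d>0$ squarefree. I then split into cases by how many of $\lambda_1,\lambda_2$ vanish. If both vanish, then $R=L=T=\emptyset$ and $\Gamma$ is edgeless, hence trivially integral. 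If exactly one vanishes, the other equals $|R|+|L|\in\mathbb{Z}$. Otherwise both are nonzero, so periodicity gives $\lambda_1/\lambda_2\in\mathbb{Q}$; a direct computation with $a\pm b\sqrt{d}$ shows this forces $a=0$, i.e.\ $\lambda_1=-\lambda_2$. Then $|R|+|L|=0$, so $R=L=\emptyset$, whence $\lambda_1=\pm|T|\in\mathbb{Z}$, contradicting irrationality. In every case $\lambda_1,\lambda_2\in\mathbb{Z}$.

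To finish, excluding the trivial case $D=0$, at least one of $\lambda_1,\lambda_2$ is a nonzero integer $\lambda^{\star}$. For any other eigenvalue $\lambda_{2k-j}$, either it is zero or, by the periodicity hypothesis, $\lambda_{2k-j}/\lambda^{\star}\in\mathbb{Q}$; hence $\lambda_{2k-j}\in\mathbb{Q}$, and as an algebraic integer it lies in $\mathbb{Z}$. This gives integrality of $\Gamma$. The only delicate point in the plan is the exclusion of the irrational Galois-conjugate case for $\{\lambda_1,\lambda_2\}$: this is exactly where the principal character is essential, because for a general $k$ the quantities $\chi_k(R)$, $\chi_k(L)$, $|\chi_k(T)|^2$ need only be algebraic integers, whereas for $k=1$ they collapse to the integer cardinalities $|R|,|L|,|T|^2$, so that the coefficients of the relevant quadratic are in $\mathbb{Z}$ and the non-negativity of $|T|^2$ forces rationality.
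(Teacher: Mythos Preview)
Your proof is correct and follows essentially the same approach as the paper: both invoke Lemma~\ref{lem:integer0} and reduce the hard direction to showing that one of $\lambda_1,\lambda_2$ (coming from the principal character) is a nonzero rational eigenvalue, then deduce rationality of all other eigenvalues. The only cosmetic difference is in the case organisation---the paper splits according to which of $R,L,T$ are empty and computes directly, whereas you phrase it via the integer quadratic $x^2-(|R|+|L|)x+(|R||L|-|T|^2)$ and a Galois-conjugate argument to exclude irrationality---but the underlying idea and the information used are the same.
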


\begin{proof} If $\Gamma$ is integral, then $\Gamma$ is periodic by Lemma \ref{lem:integer0}. Suppose that $\Gamma$ is periodic. By Lemma \ref{lem:integer0}, if we can show that $\Gamma$ has a nonzero rational eigenvalue, then all eigenvalues of $\Gamma$ are rational, and so $\Gamma$ is an integral graph.

If $T=\emptyset$, then $R\cup L\neq \emptyset$. By Lemma \ref{lem:eigenvalues and eigenvectors of Dhat}, $\{\lambda_1,\lambda_2\}=\{|L|,|R|\}$, and so at least one of $\lambda_1$ and $\lambda_2$ is a positive integer.

If $R=L=\emptyset$, then $T\neq \emptyset$. By Lemma \ref{lem:eigenvalues and eigenvectors of Dhat},  eigenvalues of $\Gamma$ are $\pm |\chi_k(T)|$, $1\leq k\leq |G|$, where $|\chi_1(T)|=|T|$ is a positive integer.

If $T\neq \emptyset$ and $R\cup L\neq \emptyset$, then by Lemma \ref{lem:eigenvalues and eigenvectors of Dhat}, $\lambda_{2}\neq 0$. If $\lambda_{1}=0$, then $\sqrt{(|R|-|L|)^2+4|T|^2}=|R|+|L|$, and so $\lambda_{2}=\frac{|R|+|L|+\sqrt{(|R|-|L|)^2+4|T|^2}}{2}=|R|+|L|$, which is a positive integer. If $\lambda_{1}\neq 0$, then by Lemma \ref{lem:integer0}, $\frac{\lambda_{1}}{\lambda_{2}}=\frac{|R|+|L|-\sqrt{(|R|-|L|)^2+4|T|^2}}{|R|+|L|+\sqrt{(|R|-|L|)^2+4|T|^2}}
=\frac{x}{y},$ where $x$ and $y$ are nonzero integers. Then
$\sqrt{(|R|-|L|)^2+4|T|^2}=\frac{(y-x)(|R|+|L|)}{x+y}\in \mathbb{Q}$. Thus $\lambda_{2}=\frac{|R|+|L|+\sqrt{(|R|-|L|)^2+4|T|^2}}{2}$ is a nonzero rational eigenvalue.
\end{proof}

\begin{lem}\label{lem:PSTiffG0G0}
Let $\Gamma=BiCay(G;R,L,T)$ be an integral bi-Cayley graph over a finite abelian group $G$. Assume that $\Gamma$ has PST between vertices $g_p$ and $g_q$ with $(g_p,g_q)\in (G_0\times G_0)\cup (G_1\times G_1)$ at the time $t$. For $j\in\{-1,1\}$, let $\Omega_{j}=\{k\mid  \chi_k(g_pg_q^{-1})=j, 1\leq k\leq n\}.$
Let $H=\{k\mid \chi_k(T)=0, 1\leq k\leq n\}.$ Write $X=R$ if $(g_p,g_q)\in G_0\times G_0$, and $X=L$ if $(g_p,g_q)\in G_1\times G_1$.
Then there exists an integer $\mu$ such that
\begin{itemize}
    \item [$1)$] $v_2(\lambda_{2k}-\lambda_{2k-1})\geq \mu+1$ for all $k\not\in H$;
	\item [$2)$] $v_2(\lambda_{2}-\lambda_{2k-1})=\mu$ for all $k\in \Omega_{-1}\setminus H$, and $v_2(\lambda_{2}-\lambda_{2k-1})\geq \mu+1$ for all $k\in \Omega_{1}\setminus H$;
	\item [$3)$] if $T\neq\emptyset$, then $v_2(\lambda_{2}-\chi_k(X))=\mu$ for all $k\in \Omega_{-1} \cap H$, and $v_2(\lambda_{2}-\chi_k(X))\geq \mu+1$ for all $k\in \Omega_{1} \cap H$;
    \item [$4)$] if $T=\emptyset$, then $v_2(|X|-\chi_k(X))=\mu$ for all $k\in \Omega_{-1}$, and $v_2(|X|-\chi_k(X))\geq\mu+1$ for all $k\in \Omega_{1}$.
\end{itemize}
\end{lem}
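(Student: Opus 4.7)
The plan is to parameterize $t=\pi F$ for some rational $F$, define $\mu:=-v_2(F)$, and then read the four conclusions directly off Lemma \ref{PSTiff} by tracking $2$-adic valuations.

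First I would translate the conditions of Lemma \ref{PSTiff} for the present case. Since $\Gamma$ is integral, every eigenvalue $\lambda_{2k-j}$ and every character sum $\chi_k(X)$ is an integer, and $\tfrac12(\chi_k(g_pg_q^{-1})-1)$ equals $0$ if $k\in\Omega_1$ and $-1$ if $k\in\Omega_{-1}$. Substituting $t=\pi F$, the PST conditions rewrite uniformly as statements about the parity of $FN$ for an integer $N$: for every $k\notin H$, $F(\lambda_{2k}-\lambda_{2k-1})\in 2\ZZ$; and additionally $F(\lambda_2-\lambda_{2k-1})\in 2\ZZ$ for $k\in\Omega_1\setminus H$ but $F(\lambda_2-\lambda_{2k-1})\in 2\ZZ+1$ for $k\in\Omega_{-1}\setminus H$. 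For $k\in H$, the same dichotomy holds with $N$ replaced by $\lambda_2-\chi_k(X)$ if $T\ne\emptyset$ and by $|X|-\chi_k(X)$ if $T=\emptyset$.

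Next I would verify that $F\in\mathbb{Q}$, so that $v_2(F)$ is a finite integer and $\mu$ is well-defined. If $T\ne\emptyset$, then $k=1$ lies outside $H$ and in $\Omega_1$, and $F(\lambda_2-\lambda_1)\in 2\ZZ$ combined with $\lambda_2-\lambda_1=\sqrt{(|R|-|L|)^2+4|T|^2}$ being a positive integer forces $F\in\mathbb{Q}$. If $T=\emptyset$ and $\Omega_{-1}\ne\emptyset$, then for any $k^*\in\Omega_{-1}$ the condition $F(|X|-\chi_{k^*}(X))\in 2\ZZ+1$ yields a nonzero integer on the right, again giving $F\in\mathbb{Q}$. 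The remaining degenerate sub-case (periodicity at $g_p=g_q$ with $T=\emptyset$) either makes every constraint vacuous (when $X=\emptyset$, in which case any integer $\mu$ suffices) or features some $k$ with $|X|-\chi_k(X)\ne 0$, which again pins down $F$.

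With $\mu:=-v_2(F)\in\ZZ$ in hand, properties $(P1)$ and $(P2)$ of $v_2$ recorded just before the lemma produce the elementary dictionary
\begin{align*}
FN\in 2\ZZ\ \ &\Longleftrightarrow\ \ v_2(N)\ge \mu+1,\\
FN\in 2\ZZ+1\ \ &\Longleftrightarrow\ \ v_2(N)=\mu,
\end{align*}
valid for every integer $N$ under the convention $v_2(0)=\infty$; note the second equivalence automatically forces $N\ne 0$. Applying this dictionary term-by-term to the list compiled above immediately delivers the four claims: claim $1)$ comes from $F(\lambda_{2k}-\lambda_{2k-1})\in 2\ZZ$ (which has no $\Omega$-dependence), claim $2)$ comes from the two $\Omega_{\pm 1}\setminus H$ cases, and claims $3)$ and $4)$ come from the $k\in H$ conditions split according to whether $T\ne\emptyset$ or $T=\emptyset$. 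The main obstacle is not conceptual but a matter of careful bookkeeping across the four disjoint regimes ($k\in H$ vs.\ $k\notin H$ crossed with $\Omega_1$ vs.\ $\Omega_{-1}$) and handling the degenerate corners; the numerical heart of the argument is the single observation $\mu=-v_2(F)$.
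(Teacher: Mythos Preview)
Your proposal is correct and follows essentially the same approach as the paper: write $t$ as $\pi$ times a rational, define $\mu$ via its $2$-adic valuation, and translate each clause of Lemma~\ref{PSTiff} into a valuation statement. The only cosmetic difference is normalization---the paper sets $t=2\pi F$ and $\mu=-1-v_2(F)$, whereas you set $t=\pi F$ and $\mu=-v_2(F)$, which amounts to the same $\mu$; your explicit ``dictionary'' $FN\in 2\ZZ\Leftrightarrow v_2(N)\ge\mu+1$, $FN\in 2\ZZ+1\Leftrightarrow v_2(N)=\mu$ and your treatment of the degenerate corners (rationality of $F$, the $T=\emptyset$ periodic case) are slightly more systematic than the paper's case-by-case verification, but the underlying argument is identical.
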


\begin{proof} If $\Gamma$ has PST between vertices $g_p$ and $g_q$ with $(g_p,g_q)\in (G_0\times G_0)\cup (G_1\times G_1)$ at the time $t$, then by Lemma \ref{PSTiff}, $\chi_k(g_pg_q^{-1})=\pm 1$ for $1\leq k\leq n$, and so $\Omega_{-1}\cup \Omega_{1}=\{1,2,\ldots,n\}$. Set $t=2\pi F$.

1) If $\chi_k(T)\neq 0$, i.e. $k\not\in H$, then by Lemma \ref{PSTiff}, $F(\lambda_{2k}-\lambda_{2k-1})\in \ZZ$. Thus $v_2(F(\lambda_{2k}-\lambda_{2k-1}))\geq 0$. Since $\Gamma$ is an integral graph, we have $F\in \mathbb{Q}$. Hence $v_2(\lambda_{2k}-\lambda_{2k-1})\geq -v_2(F)$.

2) If $\chi_k(T)\neq 0$ and $\chi_{k}({g_p}{g_q}^{-1})=-1$, i.e. $k\in \Omega_{-1}\setminus H$, then by Lemma \ref{PSTiff}, $F(\lambda_{2}-\lambda_{2k-1})\in \{\frac{1}{2}+z\mid z\in \ZZ\}$. Thus $v_2(F(\lambda_{2}-\lambda_{2k-1}))=-1$, which implies $v_2(\lambda_{2}-\lambda_{2k-1})=-1-v_2(F)$.
If $\chi_k(T)\neq 0$ and $\chi_{k}({g_p}{g_q}^{-1})=1$, i.e., $k\in \Omega_{1}\setminus H$, then $F(\lambda_{2}-\lambda_{2k-1})\in \ZZ$. Hence $v_2(\lambda_{2}-\lambda_{2k-1})\geq -v_2(F)$.

3) Assume that $T\neq\emptyset$. If $\chi_k(T)=0$ and $\chi_{k}({g_p}{g_q}^{-1})=-1$, i.e., $k\in \Omega_{-1}\cap H$, then $F(\lambda_{2}-\chi_k(X))\in \{\frac{1}{2}+z\mid z\in \ZZ\}$. Hence $v_2(\lambda_{2}-\chi_k(X))=-1-v_2(F)$. If $\chi_k(T)=0$ and $\chi_{k}({g_p}{g_q}^{-1})=1$, i.e., $k\in \Omega_{1}\cap H$, then $F(\lambda_{2}-\chi_k(X))\in \ZZ$. Hence $v_2(\lambda_{2}-\chi_k(X))\geq -v_2(F)$.

4) Assume that $T=\emptyset$. If $k\in \Omega_{-1}$, then $F(|X|-\chi_k(X))\in \{\frac{1}{2}+z\mid z\in \ZZ\}$. Hence $v_2(|X|-\chi_k(X))=-1-v_2(F)$. If $k\in \Omega_{1}$, then $F(|X|-\chi_k(X))\in \ZZ$. Hence $v_2(|X|-\chi_k(X))\geq -v_2(F)$.

Now take $\mu=-1-v_2(F)$ to complete the proof.
\end{proof}

\begin{thm}\label{PSTiffG0G0}
Let $\Gamma=BiCay(G;R,L,T)$ be an integral bi-Cayley graph over an abelian group $G$ of order $n$. Write
$$H=\{k\mid \chi_k(T)=0, 1\leq k\leq n\}.$$
Let $\lambda_1,\lambda_2,\ldots,\lambda_{2n}$ be as in Lemma $\ref{lem:eigenvalues and eigenvectors of Dhat}$. Let
$$M_{0}=\gcd(\lambda_{2k}-\lambda_{2k-1}\mid 1\leq k\leq n, k\notin H)$$
and
$$M_{1}=\gcd(\lambda_{2}-\lambda_{2k-1}\mid 1\leq k\leq n, k\notin H).$$
Then $\Gamma$ has PST between vertices $g_p$ and $g_q$ with $(g_p,g_q)\in (G_0\times G_0)\cup (G_1\times G_1)$ at the time $t$ if and only if the following conditions hold, where for $j\in\{-1,1\}$,
$$\Omega_{j}=\{k\mid  \chi_k(g_pg_q^{-1})=j, 1\leq k\leq n\};$$
$X=R$ if $(g_p,g_q)\in G_0\times G_0$ and $X=L$ if $(g_p,g_q)\in G_1\times G_1$;
$$M_{\emptyset,X}=\gcd(|X|-\chi_k(X)\mid 1\leq k\leq n)$$
and
$$	M_{X}=\left\{
		\begin{array}{ll}
			\gcd(\lambda_{2}-\chi_k(X)\mid k\in H), & {\rm{if\ }}\ H\neq \emptyset;\\[3mm]
			0, & {\rm{if\ }}\ H=\emptyset.
		\end{array}
		\right.$$
\begin{itemize}
		\item [$1)$] For each $1\leq k\leq n$, $\chi_{k}({g_p}{g_q}^{-1})=\pm 1$.
		\item [$2)$] There exists an integer $\mu$ such that
        \begin{itemize}
        \item [$2.1)$] $\left\{
		\begin{array}{ll}
		v_2(\lambda_{2}-\lambda_{2k-1})=\mu & {\rm{for\ all }}\  k\in \Omega_{-1}\setminus H;\\[3mm]
		v_2(\lambda_{2}-\chi_k(X))=\mu & {\rm{for }}\ T\neq\emptyset\ {\rm{and\ all}}\ k\in \Omega_{-1} \cap H;\\[3mm]
        v_2(|X|-\chi_k(X))=\mu & {\rm{for }}\ T=\emptyset\ {\rm{and\ all}}\ k\in \Omega_{-1};\\[3mm]
		\end{array}
		\right.$
        \item [$2.2)$]
        $\left\{
		\begin{array}{ll}
		v_2(\lambda_{2k}-\lambda_{2k-1})\geq \mu+1 & {\rm{for\ all }}\  k\not\in H;\\[3mm]
		v_2(\lambda_{2}-\lambda_{2k-1})\geq \mu+1 & {\rm{for\ all }}\ k\in \Omega_{1}\setminus H;\\[3mm]
v_2(\lambda_{2}-\chi_k(X))\geq \mu+1 & {\rm{for }}\ T\neq\emptyset\ {\rm{and\ all}}\ k\in \Omega_{1} \cap H;\\[3mm]
v_2(|X|-\chi_k(X))\geq\mu+1 & {\rm{for }}\ T=\emptyset\ {\rm{and\ all}}\ k\in \Omega_{1}.\\[3mm]
		\end{array}
		\right.$
        \end{itemize}
		\item [$3)$] If $T\neq \emptyset$, then
                    $$t\in\left\{
		              \begin{array}{ll}
           \{\frac{(1+2z)\pi}{\gcd(M_0,M_{1},M_{X})}\mid z\in\ZZ\},& {\rm{when\ }} \Omega_{-1}\neq\emptyset; \\[3mm]			
			\{\frac{2\pi z}{\gcd(M_0,M_{1},M_{X})}\mid z\in\ZZ\}, & {\rm{when\ \Omega_{-1}=\emptyset}}.
		\end{array}
		      \right.$$
		\item [$4)$] If $T=\emptyset$, then
		$$t\in \left\{
		\begin{array}{ll}
            \{\frac{2\pi z}{M_{\emptyset,X}}\mid z\in \ZZ\}, & {\rm{when\ }} \Omega_{-1}=\emptyset {\rm{\ or\ for\ each }}\ k\in \Omega_{-1},\ \chi_k(X)=|X|;\\[3mm]
            \{\frac{(1+2z)\pi}{M_{\emptyset,X}}\mid z\in \ZZ\}, & {\rm{otherwise}}.
		\end{array}
		\right.$$
	\end{itemize}	
\end{thm}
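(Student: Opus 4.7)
The plan is to mimic the proof of Theorem \ref{PSTiffG0G1}: reduce the statement to Lemma \ref{PSTiff} in the case $(g_p,g_q)\in (G_0\times G_0)\cup (G_1\times G_1)$, and then repackage the trigonometric constraints of Lemma \ref{PSTiff}(2) into $2$-adic valuation statements and gcd divisibility conditions on $t$. Condition 1) is immediate from Lemma \ref{PSTiff}(1), and the assumed integrality of $\Gamma$ is given; the substantive work is to establish that Lemma \ref{PSTiff}(2) is jointly equivalent to Conditions 2)--4).

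First I would write $t=2\pi F$ and observe that, by integrality, the quantities $\lambda_{2k}-\lambda_{2k-1}$, $\lambda_2-\lambda_{2k-1}$, $\lambda_2-\chi_k(X)$, and $|X|-\chi_k(X)$ are all nonnegative integers. With Condition 1) in hand, the coefficient $(\chi_k(g_pg_q^{-1})-1)/2$ appearing in Lemma \ref{PSTiff}(2) equals $0$ if $k\in\Omega_1$ and $-1$ if $k\in\Omega_{-1}$, so each of the three branches of Lemma \ref{PSTiff}(2) turns into a membership condition of an $F$-multiple of one of the above integers in $\ZZ$ (for $k\in\Omega_1$, and for $k\notin H$ the auxiliary constraint $F(\lambda_{2k}-\lambda_{2k-1})\in\ZZ$) or in $\ZZ+\tfrac12$ (for $k\in\Omega_{-1}$). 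Taking $v_2$ of each relation and setting $\mu:=-1-v_2(F)$, the $\ZZ+\tfrac12$-memberships force the valuation equalities in Condition 2.1) and the $\ZZ$-memberships force the valuation inequalities in Condition 2.2); this is precisely Lemma \ref{lem:PSTiffG0G0}. Conversely, given any $\mu$ satisfying Condition 2), setting $v_2(F)=-\mu-1$ makes all the required memberships simultaneously realizable. Hence, under Condition 1) and integrality, Lemma \ref{PSTiff}(2) is equivalent to Condition 2).

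Next I would aggregate the integer/half-integer constraints over all $k$ into gcd conditions on $F$ using Lemma \ref{lem:number theory}, exactly as in the proof of Theorem \ref{PSTiffG0G1}. When $T\neq\emptyset$, the $\ZZ$-constraints for $k\notin H$ (both $\lambda_{2k}-\lambda_{2k-1}$ and, when $k\in\Omega_1\setminus H$, $\lambda_2-\lambda_{2k-1}$) and for $k\in\Omega_1\cap H$ (using $\lambda_2-\chi_k(X)$) collapse into ``$F\cdot\gcd(M_0,M_1,M_X)\in\ZZ$'' (with the convention $\gcd(\cdot,\cdot,0)=\gcd(\cdot,\cdot)$ when $H=\emptyset$). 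If moreover $\Omega_{-1}\neq\emptyset$, an active $\ZZ+\tfrac12$-constraint forces $2F\cdot\gcd(M_0,M_1,M_X)$ to be odd, giving $t\in\{(1+2z)\pi/\gcd(M_0,M_1,M_X)\}$; if $\Omega_{-1}=\emptyset$, only the $\ZZ$-constraint survives, giving $t\in\{2\pi z/\gcd(M_0,M_1,M_X)\}$. This is Condition 3). When $T=\emptyset$, every index lies in $H$, so $M_0$ and $M_1$ drop out and the only gcd involved is $M_{\emptyset,X}$; the same dichotomy produces Condition 4), the extra qualifier ``$\chi_k(X)=|X|$ for all $k\in\Omega_{-1}$'' merely reflecting the degenerate sub-case where every nominal half-integer constraint is vacuous (and thus no odd-parity condition on $t$ is actually imposed).

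The main obstacle will be the case bookkeeping: three branches in Lemma \ref{PSTiff}(2), indexed by $H$ versus its complement and by $\Omega_1$ versus $\Omega_{-1}$, must be stitched into a single valuation condition with one common $\mu$, and then merged through Lemma \ref{lem:number theory} into the single compound gcd $\gcd(M_0,M_1,M_X)$ or $M_{\emptyset,X}$. A subtlety worth checking carefully is that the existence of the integer $\mu$ in Condition 2) is equivalent to $v_2(\gcd(M_0,M_1,M_X))>v_2(|T|)$ (when $T\neq\emptyset$), the analogue of the condition $v_2(M)>v_2(|T|)$ in Theorem \ref{PSTiffG0G1}(3), which is exactly what makes the combined system ``$F\cdot\gcd\in\ZZ$ and $2F\cdot\gcd\in 2\ZZ+1$'' solvable when $\Omega_{-1}\neq\emptyset$. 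Once these compatibility checks are in place, the equivalence closes by the same manipulations as in the proof of Theorem \ref{PSTiffG0G1}.
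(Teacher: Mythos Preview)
Your overall strategy matches the paper's: reduce to Lemma \ref{PSTiff}, extract Condition 2) by taking $2$-adic valuations (this is exactly Lemma \ref{lem:PSTiffG0G0}), and then aggregate via Lemma \ref{lem:number theory} into the gcd descriptions 3)--4). Two points need correction, however.

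First, your collapse of the $\ZZ$-constraints directly into $F\cdot\gcd(M_0,M_1,M_X)\in\ZZ$ is too quick. The $\ZZ$-constraints only involve $k\in\Omega_1$ (apart from $M_0$, which ranges over all $k\notin H$), so what you actually obtain is $F\cdot\gcd(M_0,M_{1,1},M_{X,1})\in\ZZ$, where $M_{1,1}=\gcd(\lambda_2-\lambda_{2k-1}\mid k\in\Omega_1\setminus H)$ and $M_{X,1}=\gcd(\lambda_2-\chi_k(X)\mid k\in\Omega_1\cap H)$; meanwhile the $\ZZ+\tfrac12$-constraints give $F\cdot\gcd(M_{1,-1},M_{X,-1})\in\ZZ+\tfrac12$ with $M_{1,-1},M_{X,-1}$ defined over $\Omega_{-1}$. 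Passing from these split gcds to the combined $\gcd(M_0,M_1,M_X)$ is the heart of the paper's argument: one must intersect sets of the form $\frac{2\pi}{a}\ZZ$ and $\frac{\pi}{b}(1+2\ZZ)$, and it is precisely Condition 2) (namely $v_2(M_{1,1}),v_2(M_{X,1}),v_2(M_0)\geq\mu+1$ while $v_2(M_{1,-1})=v_2(M_{X,-1})=\mu$) that makes this intersection nonempty and equal to $\frac{\pi}{\gcd(M_0,M_1,M_X)}(1+2\ZZ)$. The paper carries this out by a case analysis on which of $\Omega_{-1}\setminus H$, $\Omega_1\cap H$, $\Omega_{-1}\cap H$ are empty; you should expect to do the same.

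Second, your remark that Condition 2) is ``equivalent to $v_2(\gcd(M_0,M_1,M_X))>v_2(|T|)$'' is a misplaced analogy. In Theorem \ref{PSTiffG0G1} one had $R=L$, whence $\lambda_{2k}-\lambda_{2k-1}=2|\chi_k(T)|$ and $M_0=2M_T$, which is why $|T|$ appeared. Here $R\neq L$ in general and $|T|$ plays no role; the solvability of the mixed system is governed solely by the valuation comparison $v_2(M_{1,-1})=\mu<\mu+1\leq v_2(M_0),v_2(M_{1,1}),v_2(M_{X,1})$ encoded in Condition 2).
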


\begin{proof} By Lemma \ref{PSTiff}, $\Gamma$ has PST between vertices $g_p$ and $g_q$ with $(g_p,g_q)\in (G_0\times G_0)\cup (G_1\times G_1)$ at the time $t$ if and only if for each $1\leq k\leq n$,
\begin{itemize}
	\item [$i)$] $\chi_{k}({g_p}{g_q}^{-1})=\pm 1$;
	\item [$ii)$] if $k\not\in H$, then $t(\lambda_{2k}-\lambda_{2k-1})\in \{2z\pi\mid z\in \ZZ\}$ and $t(\lambda_{2}-\lambda_{2k-1})\in \{(2z-\frac{\chi_{k}({g_p}{g_q}^{-1})-1}{2})\pi\mid z\in \ZZ\}$;
    \item [$iii)$] if $T\neq \emptyset$ and $k\in H$, then $t(\lambda_{2}-\chi_k(X))\in \{(2z-\frac{\chi_{k}({g_p}{g_q}^{-1})-1}{2})\pi\mid z\in \ZZ\}$;
	\item [$iv)$] if $T=\emptyset$, then
	 $t(|X|-\chi_k(X))\in\{(2z-\frac{\chi_{k}({g_p}{g_q}^{-1})-1}{2})\pi\mid z\in \ZZ\}$.
\end{itemize}

If $\Gamma$ has PST between vertices $g_p$ and $g_q$ with $(g_p,g_q)\in (G_0\times G_0)\cup (G_1\times G_1)$ at the time $t$, then Lemma \ref{PSTiff} yields Condition $1)$, and Lemma \ref{lem:PSTiffG0G0} yields Condition $2)$.
Thus to complete the proof, it suffices to show, in the assumption of Conditions $1)$ and $2)$, the equivalence between Conditions $ii)$, $iii)$, $iv)$ and Conditions $3)$, $4)$.

If $T=\emptyset$, then $H=\{1,2,\ldots,n\}$ and so there does not exist $1\leq k\leq n$ such that $k\not\in H$. It follows that Condition $ii)$ works only if $T\neq\emptyset$.
Condition $iii)$, too, works only if $T\neq\emptyset$. This leads us to divide the proof into two parts according to whether $T$ is nonempty.

\textbf{Case I}: $T\neq \emptyset$. Clearly $1\in \Omega_1\setminus H$. We can define
\begin{align*}\label{eqn:M0}
M_0=\gcd(\lambda_{2k}-\lambda_{2k-1}\mid 1\leq k\leq n, k\notin H),
\end{align*}
and
\begin{equation*}\label{eqn:M11}
M_{1,1}=\gcd(\lambda_{2}-\lambda_{2k-1}\mid k\in \Omega_1\setminus H).
\end{equation*}
If $\Omega_{-1}\setminus H\neq \emptyset$, define
\begin{equation*}\label{eqn:M1-1}
M_{1,-1}=\gcd(\lambda_{2}-\lambda_{2k-1}\mid k\in \Omega_{-1}\setminus H).
\end{equation*}
By Lemma \ref{lem:eigenvalues and eigenvectors of Dhat}, one can check that $M_0$, $M_{1,1}$ and $M_{1,-1}$ are positive integers.
We will show that, in the assumption of Conditions $1)$ and $2)$, if $\Omega_{-1}\setminus H\neq \emptyset$, then Condition $ii)$ is equivalent to
\begin{align}
	&tM_0\in \{2z\pi \mid z\in \ZZ\},\label{eqn:tM0}\\
	&tM_{1,1}\in \{2z\pi \mid z\in \ZZ\},\label{eqn:tM11}\\
	&tM_{1,-1}\in \{(2z+1)\pi\mid z\in \ZZ \},\label{eqn:tM1-1}
\end{align}
and if $\Omega_{-1}\setminus H=\emptyset$, then Condition $ii)$ is equivalent to \eqref{eqn:tM0} and \eqref{eqn:tM11}.
For $1\leq k\leq n$ and $k\notin H$, let $\lambda_{2k}-\lambda_{2k-1}=M_0d_k$ for some integer $d_k$. Clearly $\gcd(d_1,d_2,\cdots,d_n)=1$.
Thus setting $t=\pi F$ and applying Lemma \ref{lem:number theory}, we have that $t(\lambda_{2k}-\lambda_{2k-1})\in \{2z\pi\mid z\in \ZZ\}$ if and only if \eqref{eqn:tM0} holds.
Similarly, for $k\in \Omega_1\setminus H$, $t(\lambda_{2}-\lambda_{2k-1})\in \{2z\pi\mid z\in \ZZ\}$ if and only if \eqref{eqn:tM11} holds.
Therefore, when $\Omega_{-1}\setminus H= \emptyset$, Condition $ii)$ is equivalent to \eqref{eqn:tM0} and \eqref{eqn:tM11}.
If $\Omega_{-1}\setminus H\neq \emptyset$, let $\lambda_{2}-\lambda_{2k-1}=M_{1,-1}d'_{k}$ for some integer $d'_{k}$. By Condition $2.1)$, $v_2(\lambda_{2}-\lambda_{2k-1})=\mu$ for every $k\in \Omega_{-1}\setminus H$, so $d'_{k}$ is odd for every $k\in \Omega_{-1}\setminus H$.
Applying Lemma \ref{lem:number theory} again, we have that $t(\lambda_{2}-\lambda_{2k-1})\in \{(2z+1)\pi\mid z\in \ZZ\}$ if and only if \eqref{eqn:tM1-1} holds. Therefore, when $\Omega_{-1}\setminus H\neq \emptyset$, Condition $ii)$ is equivalent to \eqref{eqn:tM0}, \eqref{eqn:tM11} and \eqref{eqn:tM1-1}.
For convenience, set
\begin{align*}\label{eqn:M1}
	M_{1}=\gcd(\lambda_{2}-\lambda_{2k-1}\mid 1\leq k\leq n, k\notin H).
\end{align*}
Then $M_1=\gcd(M_{1,1},M_{1,-1})$ if $\Omega_{-1}\setminus H\neq \emptyset$, and $M_1=M_{1,1}$ if $\Omega_{-1}\setminus H=\emptyset$.

Consider Condition $iii)$.
If $H=\emptyset$, Condition $iii)$ is trivial. Assume that $H\neq \emptyset$. Since $\Omega_1\cup \Omega_{-1}=\{1,2,\ldots,n\}$, either $H\cap \Omega_1\neq \emptyset$ or $H\cap \Omega_{-1}\neq \emptyset$. For $j\in\{1,-1\}$, when $H\cap \Omega_j\neq \emptyset$, we can define
\begin{equation*}\label{eqn:MX1}
	M_{X,j}=\gcd(\lambda_{2}-\chi_k(X)\mid k\in H\cap \Omega_j).
\end{equation*}
By Lemma \ref{lem:eigenvalues and eigenvectors of Dhat}, $\lambda_{2}-\chi_k(X)\geq \frac{|R|+|L|+\sqrt{(|R|-|L|)^2+4|T|^2}}{2}-|X|>0$ (since $T\neq \emptyset$), so $M_{X,j}$ is a positive integer. By similar arguments to those for Condition $ii)$, one can see that in the assumption of Conditions $1)$ and $2)$, if $H\cap \Omega_1=\emptyset$, Condition $iii)$ is equivalent to
\begin{equation}\label{eqn:tMX-1}
	tM_{X,-1}\in \{(2z+1)\pi\mid z\in \ZZ \};
\end{equation}
if $H\cap \Omega_{-1}=\emptyset$, Condition $iii)$ is equivalent to
\begin{equation}\label{eqn:tMX1}
tM_{X,1}\in \{2z\pi \mid z\in \ZZ\};
\end{equation}
and if $H\cap \Omega_{j}\neq\emptyset$ for every $j\in \{-1,1\}$, Condition $iii)$ is equivalent to \eqref{eqn:tMX-1} and \eqref{eqn:tMX1}. For convenience, set
\begin{equation*}\label{eqn:MX}
	M_{X}=\left\{
		\begin{array}{ll}
			\gcd(\lambda_{2}-\chi_k(X)\mid k\in H), & {\rm{if\ }}\ H\neq \emptyset;\\[3mm]
			0, & {\rm{if\ }}\ H= \emptyset.
		\end{array}
		\right.
\end{equation*}
Then $M_X=M_{X,-1}$ if $H\neq \emptyset$ and $H\cap \Omega_{1}=\emptyset$; $M_X=M_{X,1}$ if $H\neq \emptyset$ and $H\cap \Omega_{-1}=\emptyset$; $M_X=\gcd(M_{X,1},M_{X,-1})$ if $H\neq \emptyset$, $H\cap \Omega_{1}\neq\emptyset$ and $H\cap \Omega_{-1}\neq\emptyset$.

Now combine Conditions $ii)$ and $iii)$.

If $H=\emptyset$ and $\Omega_{-1}\setminus H\neq \emptyset$, then by \eqref{eqn:tM0}, \eqref{eqn:tM11} and \eqref{eqn:tM1-1},
\begin{equation}\label{eqn:t11}
\begin{split}
	t\in& \frac{2\pi}{M_0}\ZZ\cap \frac{2\pi}{M_{1,1}}\ZZ\cap \frac{\pi}{M_{1,-1}}(1+2\ZZ) =\frac{2\pi}{M_0}\ZZ\cap \frac{\pi}{M_{1,1}M_{1,-1}}(2M_{1,-1}\ZZ\cap M_{1,1}(1+2\ZZ)).
\end{split}
\end{equation}
Consider the set $2M_{1,-1}\ZZ\cap M_{1,1}(1+2\ZZ)$. We have
\begin{equation}\label{eqn:1zzzzzzz}
	\begin{split}
		&~z\in 2M_{1,-1}\ZZ\cap M_{1,1}(1+2\ZZ)\\
		\Longleftrightarrow&~z=2M_{1,-1}z_1=M_{1,1}(1+2z_2) \text{ for some }z_1,z_2\in\ZZ\\
		\Longleftrightarrow &~2z_1M_{1,-1}-2z_2M_{1,1}=M_{1,1}\\
		\Longleftrightarrow &~\gcd(2M_{1,-1},2M_{1,1})\mid M_{1,1}\\
		\Longleftrightarrow &~v_2(M_{1,1})\geq \mu +1~(\text{note that } v_2(M_{1,-1})=\mu \text{ by Condition 2.1))}.
	\end{split}
\end{equation}
This coincides with Condition $2.2)$.
Furthermore, since $v_2(M_{1,1})\geq \mu +1$ and $v_2(M_{1,-1})=\mu$, we have
\begin{equation}\label{eqn:1ttttttt00}
	\begin{split}
		2M_{1,-1}\ZZ\cap M_{1,1}(1+2\ZZ)
		= &{\rm lcm}(2M_{1,-1},M_{1,1})(1+2\ZZ)
		=\frac{2M_{1,-1}M_{1,1}}{\gcd(2M_{1,-1},M_{1,1})}(1+2\ZZ).
	\end{split}
\end{equation}
Since $M_{1,-1}$ is a positive integer, $v_2(M_{1,-1})=\mu\geq 0$, and so $v_2(M_{1,1})\geq \mu+1\geq 1$, which implies that $M_{1,1}$ is even. Thus applying $v_2(M_{1,1})\geq \mu +1$ and $v_2(M_{1,-1})=\mu$ again, we have
\begin{equation}\label{eqn:1ttttttt000000000}
	\begin{split}		
\frac{2M_{1,-1}M_{1,1}}{\gcd(2M_{1,-1},M_{1,1})}(1+2\ZZ)=
\frac{M_{1,-1}M_{1,1}}{\gcd(M_{1,-1},M_{1,1})}(1+2\ZZ)=\frac{M_{1,-1}M_{1,1}}{M_1}(1+2\ZZ).
	\end{split}
\end{equation}
Thus combining \eqref{eqn:t11} and \eqref{eqn:1ttttttt000000000}, we have
\begin{equation}\label{eqn:12ttttttt}
	\begin{split}
t\in \frac{2\pi}{M_0}\ZZ\cap \frac{\pi}{M_1}(1+2\ZZ)=\frac{\pi}{M_0M_1}(2M_1\ZZ\cap M_0(1+2\ZZ)).
	\end{split}	
\end{equation}
For the set $2M_1\ZZ\cap M_0(1+2\ZZ)$, similar argument shows that
$2M_1\ZZ\cap M_0(1+2\ZZ)\neq \emptyset \Longleftrightarrow v_2(M_0)\ge \mu+1$ (due to $v_2(M_1)=\mu$), which coincides with Condition $2.2)$. Furthermore, $2M_1\ZZ\cap M_0(1+2\ZZ)=\frac{M_0M_1}{\gcd(M_0,M_1)}(1+2\ZZ)$.
Then \eqref{eqn:12ttttttt} becomes
\begin{equation}\label{eqn:tM1}
		t\in \frac{\pi}{\gcd(M_0,M_1)}(1+2\ZZ).
\end{equation}
Note that $M_{X}=0$ when $H=\emptyset$. Hence \eqref{eqn:tM1} is equivalent to $t\in \frac{\pi}{\gcd(M_0,M_1,M_X)}(1+2\ZZ)$, which yields Condition $3)$.

If $H=\emptyset$ and $\Omega_{-1}\setminus H=\emptyset$, then by \eqref{eqn:tM0} and \eqref{eqn:tM11},
\begin{equation}\label{eqn:t-12-3-1}
	t\in\frac{2\pi}{M_0}\ZZ\cap \frac{2\pi}{M_{1,1}}\ZZ.
\end{equation}
If $H\neq \emptyset$, $\Omega_{-1}\setminus H=\emptyset$ and $H\cap \Omega_{-1}=\emptyset$, then by \eqref{eqn:tM0}, \eqref{eqn:tM11} and \eqref{eqn:tMX1},
\begin{equation}\label{eqn:t-12-3-2}
	t\in \frac{2\pi}{M_0}\ZZ\cap \frac{2\pi}{M_{1,1}}\ZZ\cap \frac{2\pi}{M_{X,1}}\ZZ.
\end{equation}
By similar arguments to those for \eqref{eqn:t11}, one can check that \eqref{eqn:t-12-3-1} (resp. \eqref{eqn:t-12-3-2}) holds if and only if $t\in \frac{2\pi}{\gcd(M_0,M_{1},M_X)}\ZZ$, which yields Condition 3). Note that $\Omega_{-1}=\emptyset$ if and only if $H=\emptyset$ and $\Omega_{-1}\setminus H=\emptyset$, or $H\neq \emptyset$, $\Omega_{-1}\setminus H=\emptyset$ and $H\cap \Omega_{-1}=\emptyset$.

If $H\neq \emptyset$, $\Omega_{-1}\setminus H\neq \emptyset$ and $H\cap \Omega_{1}=\emptyset$,
then by \eqref{eqn:tMX-1}, \eqref{eqn:tM0}, \eqref{eqn:tM11} and \eqref{eqn:tM1-1},
\begin{equation}\label{eqn:t-3}
	t\in \frac{\pi}{M_{X,-1}}(1+2\ZZ)\cap \frac{2\pi}{M_0}\ZZ\cap \frac{2\pi}{M_{1,1}}\ZZ\cap \frac{\pi}{M_{1,-1}}(1+2\ZZ).
\end{equation}
If $H\neq \emptyset$, $\Omega_{-1}\setminus H=\emptyset$ and $H\cap \Omega_{1}=\emptyset$, then by \eqref{eqn:tM0}, \eqref{eqn:tM11} and \eqref{eqn:tMX-1},
\begin{equation}\label{eqn:t-2}
	t\in \frac{2\pi}{M_0}\ZZ\cap \frac{2\pi}{M_{1,1}}\ZZ\cap \frac{\pi}{M_{X,-1}}(1+2\ZZ).
\end{equation}
One can check that \eqref{eqn:t-3} (resp. \eqref{eqn:t-2}) holds if and only if $t\in \frac{\pi}{\gcd(M_0,M_{1},M_{X})}(1+2\ZZ)$, which yields Condition $3)$.

If $H\neq \emptyset$, $\Omega_{-1}\setminus H \neq \emptyset$ and $H\cap \Omega_{-1}=\emptyset$, then by \eqref{eqn:tM0}, \eqref{eqn:tM11}, \eqref{eqn:tM1-1} and \eqref{eqn:tMX1},
\begin{equation}\label{eqn:t-4}
t\in \frac{2\pi}{M_0}\ZZ\cap \frac{2\pi}{M_{1,1}}\ZZ\cap \frac{\pi}{M_{1,-1}}(1+2\ZZ)\cap \frac{2\pi}{M_{X,1}}\ZZ.
\end{equation}
If $H\neq \emptyset$, $\Omega_{-1}\setminus H\neq \emptyset$, $H\cap \Omega_{1}\neq \emptyset$ and $H\cap \Omega_{-1}\neq \emptyset$,
then by \eqref{eqn:tM0}, \eqref{eqn:tM11}, \eqref{eqn:tM1-1}, \eqref{eqn:tMX-1} and \eqref{eqn:tMX1},
\begin{equation}\label{eqn:t-5}
	t\in \frac{2\pi}{M_0}\ZZ\cap \frac{2\pi}{M_{1,1}}\ZZ\cap \frac{\pi}{M_{1,-1}}(1+2\ZZ)\cap\frac{2\pi}{M_{X,1}}\ZZ\cap \frac{\pi}{M_{X,-1}}(1+2\ZZ).
\end{equation}
If $H\neq \emptyset$, $\Omega_{-1}\setminus H=\emptyset$, $H\cap \Omega_{1}\neq \emptyset$ and $H\cap \Omega_{-1}\neq \emptyset$,
then by \eqref{eqn:tM0}, \eqref{eqn:tM11}, \eqref{eqn:tMX-1} and \eqref{eqn:tMX1},
\begin{equation}\label{eqn:t-6}
	t\in \frac{2\pi}{M_0}\ZZ\cap \frac{2\pi}{M_{1,1}}\ZZ\cap\frac{2\pi}{M_{X,1}}\ZZ\cap \frac{\pi}{M_{X,-1}}(1+2\ZZ).
\end{equation}
One can check that \eqref{eqn:t-4} (resp. \eqref{eqn:t-5}, \eqref{eqn:t-6}) holds if and only if $t\in \frac{\pi}{\gcd(M_0,M_{1},M_{X})}(1+2\ZZ)$, which yields Condition $3)$.

\noindent\textbf{Case II}: $T=\emptyset$. Consider Condition $iv)$, i.e., $t(|X|-\chi_k(X))\in\{(2z-\frac{\chi_{k}({g_p}{g_q}^{-1})-1}{2})\pi\mid z\in \ZZ\}$. Let
\begin{equation}\label{eqn:MEX}
	M_{\emptyset,X}=\gcd(|X|-\chi_k(X)\mid 1\leq k\leq n).
\end{equation}
Clearly $|X|\geq \chi_k(X)$ for any $k$. If for each $1\leq k\leq n$, $\chi_k(X)=|X|$, then the adjacency matrix of $Cay(G,X)$ is the diagonal matrix $|X|I$. Since $X$ does not contain the identity element of $G$, a contradiction occurs. Therefore, $M_{\emptyset,X}$ is a positive integer. Furthermore, for $j\in \{-1,1\}$, if $\Omega_j\neq \emptyset$, define
\begin{equation}\label{eqn:MEXj}
M_{\emptyset,X,j}=\gcd(|X|-\chi_k(X)\mid k\in \Omega_j).
\end{equation}
Since $M_{\emptyset,X}$ is a positive integer, at least one of $M_{\emptyset,X,1}$ or $M_{\emptyset,X,-1}$ is a positive integer.

Clearly $\Omega_1\neq\emptyset$. If $\chi_k(X)=|X|$ for any $k\in \Omega_1$, then Condition $iv)$ is equivalent to $t(|X|-\chi_k(X))\in\{(2z+1)\pi\mid z\in \ZZ\}$ for any $k\in \Omega_{-1}$. Applying Lemma \ref{lem:number theory} together with the use of Condition 2.1), we have
$tM_{\emptyset,X,-1}\in \{(2z+1)\pi\mid z\in \ZZ\}$. Note that $M_{\emptyset,X}=M_{\emptyset,X,-1}$ if $\chi_k(X)=|X|$ for any $k\in \Omega_1$. Therefore, $t\in \{\frac{(2z+1)\pi}{M_{\emptyset,X}}\mid z\in \ZZ\}$, which yields Condition $4)$.

If $\Omega_{-1}=\emptyset$ or $\chi_k(X)=|X|$ for any $k\in \Omega_{-1}$, then Condition $iv)$ is equivalent to $t(|X|-\chi_k(X))\in\{2z\pi\mid z\in \ZZ\}$ for any $k\in \Omega_{1}$. Applying Lemma \ref{lem:number theory}, we have
$tM_{\emptyset,X,1}\in \{2z\pi\mid z\in \ZZ\}$. Note that $M_{\emptyset,X}=M_{\emptyset,X,1}$ if $\Omega_{-1}=\emptyset$ or $\chi_k(X)=|X|$ for any $k\in \Omega_{-1}$. Therefore, $t\in \{\frac{2z\pi}{M_{\emptyset,X}}\mid z\in \ZZ\}$, which yields Condition $4)$.

If for every $j\in\{-1,1\}$, $\Omega_j\neq \emptyset$ and there exists $k\in \Omega_j$ such that $\chi_k(X)\neq |X|$, then Condition $iv)$ is equivalent to $t(|X|-\chi_k(X))\in \{2z\pi\mid z\in \ZZ\}$ for any $k\in \Omega_{1}$ and $t(|X|-\chi_{k}(X))\in \{(2z+1)\pi\mid z\in \ZZ\}$ for any $k\in \Omega_{-1}$. Applying Lemma \ref{lem:number theory} again, we have $tM_{\emptyset,X,1}\in \{2z\pi\mid z\in \ZZ\}$ and $tM_{\emptyset,X,-1}\in \{(2z+1)\pi\mid z\in \ZZ\}$. Thus
\begin{equation}\label{eqn:tMEX1,-1,2}
t\in \frac{\pi}{M_{\emptyset,X,1}}2\ZZ\cap \frac{\pi}{M_{\emptyset,X,-1}}(1+2\ZZ).
\end{equation}
By similar arguments to those for \eqref{eqn:t11}, one can check that \eqref{eqn:tMEX1,-1,2} holds if and only if $t\in \frac{\pi}{M_{\emptyset,X}}(1+2\ZZ)$ (note that $v_2(M_{\emptyset,X,1})\ge \mu +1$ by Condition $2.2)$), which yields Condition $4)$.
\end{proof}

\begin{rem}\label{remark}
\begin{itemize}
    \item [$1)$] In Theorem $\ref{PSTiffG0G0}$, if $\Gamma=\mathrm{BiCay}(G;R,L,T)$ has PST between vertices $g_p$ and $g_q$ with $(g_p,g_q)\in (G_0\times G_0)\cup (G_1\times G_1)$, then Condition $1)$ requires $\chi_{k}({g_p}{g_q}^{-1})=\pm 1$ for any $1\leq k\leq n$. This yields that $\chi_{{g_p}{g_q}^{-1}}(g_k)=\pm 1$ for any $1\leq k\leq n$. If $g_p\neq g_q$, then there exists $k$ such that $\chi_{{g_p}{g_q}^{-1}}(g_k)=-1$, and so $g_pg_q^{-1}$ is of order $2$. Therefore, when $g_p\neq g_q$, Condition $1)$ in Theorem $\ref{PSTiffG0G0}$ can be replaced by  the order of $g_pg_q^{-1}$ being $2$.
	\item [$2)$] If $\Omega_{-1}=\emptyset$, then Condition $2)$ in Theorem $\ref{PSTiffG0G0}$ is trivial. This is because Condition $2.1)$ does not work in this case and Condition $2.2)$ always holds because of the existence of $\mu$.
\end{itemize}
\end{rem}

\begin{rem}\label{remark1-4}
\begin{itemize}
    \item [$1)$] In Theorem $\ref{PSTiffG0G0}$, if $T=\emptyset$, then $\Gamma=\mathrm{BiCay}(G;R,L,T)$ is the union of two disjoint Cayley graphs. As corollaries, we can obtain Theorems $2.3$ and $2.4$ in $\cite{f1}$. Note that Theorems $2.3$ and $2.4$ in $\cite{f1}$ require the given Cayley graph is connected. The connectivity ensures that if $|X|=\chi_k(X)$ with $X\in\{R,L\}$ then $k=1$ $($this is because $|X|=\chi_k(X)$ implies $\chi_k(g)=1$ for any $g\in X$; due to the connectivity, $G=\langle X\rangle$, and so $\chi_k(g)=1$ for any $g\in G$, which yields $k=1)$. Therefore, in $4)$ of Theorem $\ref{PSTiffG0G0}$, the condition ``for each $k\in \Omega_{-1}$, $\chi_k(X)=|X|$'' does not work in the assumption of the connectivity.
    \item [$2)$] Theorem $\ref{PSTiffG0G0}$ does not require the connectivity of $\Gamma$. The reader can compare the discussion before Theorem $2.3$ in $\cite{f1}$ and the discussion after \eqref{eqn:MEX} in this paper for the reason.
\end{itemize}
\end{rem}

Theorem \ref{PSTiffG0G0} works only if the bi-Cayley graph $\Gamma$ is integral. It is not clear whether $\Gamma$ must be integral if $\Gamma$ has PST between vertices $g_p$ and $g_q$ with $(g_p,g_q)\in (G_0\times G_0)\cup (G_1\times G_1)$. However, we can provide a sufficient condition in the following lemma to make sure a bi-Cayley graph is integral. To this end, we introduce a new type of bi-Cayley graphs. A bi-Cayley graph $\Gamma=\mathrm{BiCay}(G;R,L,T)$ over an abelian group $G$ of order $n$ is said to be {\em weakly inner-cospectral} if $\{\chi_k(R)\mid \chi_k(T)=0,1\leq k\leq n\}=\{\chi_k(L)\mid \chi_k(T)=0,1\leq k\leq n\}$. Clearly any $\mathrm{BiCay}(G;R,L,T)$ over an abelian group $G$ with $T=\emptyset$ or $R=L$ is weakly inner-cospectral.


\begin{lem}\label{lem:integeral}
Let $\Gamma=\mathrm{BiCay}(G;R, L, T)$ be a weakly inner-cospectral bi-Cayley graph over a finite abelian group $G$. If $\Gamma$ has PST between vertices $g_p$ and $g_q$ with $(g_p,g_q)\in (G_0\times G_0)\cup (G_1\times G_1)$ at the time $t$, then $\Gamma$ is an integral graph.
\end{lem}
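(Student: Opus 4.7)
The plan is to prove that $\Gamma$ is periodic at the time $\tau:=2t$, after which Lemma~\ref{lem:period=integral} immediately yields integrality. By the symmetry that interchanges $R$ and $L$, I may assume $(g_p,g_q)\in G_0\times G_0$, so $X=R$ in Lemma~\ref{PSTiff}. Since $\chi_k(g_pg_q^{-1})=\pm 1$ by Lemma~\ref{PSTiff}(1), each set $\{(2z-(\chi_k(g_pg_q^{-1})-1)/2)\pi\mid z\in\ZZ\}$ on the right of Lemma~\ref{PSTiff}(2) is a subset of $\pi\ZZ$, so the PST hypothesis unpacks to
\begin{itemize}
	\item[(P1)] $t(\lambda_{2k}-\lambda_{2k-1})\in 2\pi\ZZ$ for every $k$ with $\chi_k(T)\ne 0$;
	\item[(P2)] $t(\lambda_2-\lambda_{2k-1})\in \pi\ZZ$ for every $k$ with $\chi_k(T)\ne 0$;
	\item[(P3)] $t(\lambda_2-\chi_k(R))\in \pi\ZZ$ for every $k$ with $\chi_k(T)=0$, provided $T\ne\emptyset$;
	\item[(P4)] $t(|R|-\chi_k(R))\in \pi\ZZ$ for every $k$, when $T=\emptyset$.
\end{itemize}
Doubling $t$ promotes every $\pi\ZZ$ constraint above to a $2\pi\ZZ$ constraint.

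I next read off a spectral criterion for periodicity at a time $\tau$. By Corollary~\ref{core:H-entry} specialised to $g_p=g_q$ and by \eqref{eqn:sumsqr}, the $k$-th summand in $nH_{u,u}(\tau)$ has modulus at most $1$, so $|H_{u,u}(\tau)|=1$ iff all $n$ summands coincide as a single unit complex number. Case-checking \eqref{eqn:xi1}--\eqref{eqn:xi2} shows that when $\chi_k(T)=0$, the $k$-th summand equals $e^{-\mathrm{i}\tau\chi_k(R)}$ for $u\in G_0$ and $e^{-\mathrm{i}\tau\chi_k(L)}$ for $u\in G_1$; when $\chi_k(T)\ne 0$ it is a strict convex combination of $e^{-\mathrm{i}\tau\lambda_{2k-1}}$ and $e^{-\mathrm{i}\tau\lambda_{2k}}$, hence lies on the unit circle only if $\tau\lambda_{2k-1}\equiv\tau\lambda_{2k}\pmod{2\pi}$. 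Therefore $\Gamma$ is periodic at $\tau$ iff there exist phases $\theta_0,\theta_1$ (which must coincide as soon as some $\chi_k(T)\ne 0$, via the common eigenvalue $\lambda_{2k-1}$) such that $\tau\lambda_{2k-1}\equiv\tau\lambda_{2k}\equiv -\theta_0\pmod{2\pi}$ for all $k$ with $\chi_k(T)\ne 0$, and $\tau\chi_k(R)\equiv -\theta_0$, $\tau\chi_k(L)\equiv -\theta_1\pmod{2\pi}$ for all $k$ with $\chi_k(T)=0$.

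I verify these congruences for $\tau=2t$. If $T\ne\emptyset$, take $-\theta_0=-\theta_1:=\tau\lambda_1$: doubled (P1) and doubled (P2) handle the $\chi_k(T)\ne 0$ part, and doubled (P3) combined with the $k=1$ instance $\tau(\lambda_2-\lambda_1)\in 2\pi\ZZ$ of (P2) handles the $R$-side of the $\chi_k(T)=0$ part. The crucial step is the $L$-side of the $\chi_k(T)=0$ part, for which weak inner-cospectrality is exactly tailored: it furnishes, for each $k$ with $\chi_k(T)=0$, some $k'$ with $\chi_{k'}(T)=0$ and $\chi_k(L)=\chi_{k'}(R)$, reducing the required $L$-condition at $k$ to the $R$-condition at $k'$ already verified. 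When $T=\emptyset$, set $-\theta_0:=\tau|R|$ and $-\theta_1:=\tau|L|$; weak inner-cospectrality forces $|R|=|L|$ since the sets $\{\chi_k(R)\}$ and $\{\chi_k(L)\}$ share the same maximum, doubled (P4) handles the $R$-side, and the same matching argument transfers it to the $L$-side. The main obstacle throughout is precisely this transfer from the $R$-constraints supplied by the PST hypothesis (because $(g_p,g_q)\in G_0\times G_0$) to the $L$-constraints needed for periodicity at vertices in $G_1$, which is what the weakly inner-cospectral hypothesis is designed to deliver.
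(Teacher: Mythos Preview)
Your proof is correct and follows essentially the same approach as the paper: both argue that $\Gamma$ is periodic at time $2t$ and then invoke Lemma~\ref{lem:period=integral}. The paper simply lists the doubled conditions from Lemma~\ref{PSTiff} and asserts periodicity at $2t$, whereas you derive the spectral criterion for periodicity explicitly from Corollary~\ref{core:H-entry} and \eqref{eqn:sumsqr}; this extra care is welcome, since the paper's ``Therefore, $\Gamma$ is periodic at the time $2t$'' leaves the reader to reconstruct precisely the argument you give.
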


\begin{proof} Assume that $\Gamma$ has PST between $g_p$ and $g_q$ with $(g_p,g_q)\in G_0\times G_0$ at the time $t$ (a similar argument applies to $(g_p,g_q)\in G_1\times G_1$). Then by Lemma \ref{PSTiff}, $\chi_{k}({g_p}{g_q}^{-1})=\pm 1$ and
$$
\left\{\begin{array}{ll}
t(\lambda_{2k}-\lambda_{2k-1})\in\{2z\pi\mid z\in \ZZ\}\ \ {\rm{and}} &  \\
			\ \ \ t(\lambda_{2}-\lambda_{2k-1})\in \{(2z-\frac{\chi_{k}({g_p}{g_q}^{-1})-1}{2})\pi\mid z\in \ZZ\}, & {\rm{if}}\ \chi_k(T)\neq 0; \\	[3mm]
			t(|R|-\chi_k(R))\in\{(2z-\frac{\chi_{k}({g_p}{g_q}^{-1})-1}{2})\pi\mid z\in \ZZ\}, & {\rm{if}}\ T=\emptyset;\\	[3mm] t(\lambda_{2}-\chi_k(R))\in\{(2z-\frac{\chi_{k}({g_p}{g_q}^{-1})-1}{2})\pi\mid z\in \ZZ\}, & {\rm{if}}\ \chi_k(T)= 0, T\neq\emptyset.
\end{array}
\right.
$$	
Since $\Gamma$ is weakly inner-cospectral, we have
$$
\left\{\begin{array}{ll}
2t(\lambda_{2k}-\lambda_{2k-1})\in\{2z\pi\mid z\in \ZZ\}\ \ {\rm{and}}
			\ \ 2t(\lambda_{2}-\lambda_{2k-1})\in \{2z\pi\mid z\in \ZZ\}, & {\rm{if}}\ \chi_k(T)\neq 0; \\	[3mm]
			2t(|R|-\chi_k(R))\in\{2z\pi\mid z\in \ZZ\}\ \ {\rm{and}}
			\ \ 2t(|L|-\chi_k(L))\in\{2z\pi\mid z\in \ZZ\}\, & {\rm{if}}\ T=\emptyset;\\	[3mm] 2t(\lambda_{2}-\chi_k(R))\in\{2z\pi\mid z\in \ZZ\}\ \ {\rm{and}}
			\ \ 2t(\lambda_{2}-\chi_k(L))\in\{2z\pi\mid z\in \ZZ\}, & {\rm{if}}\ \chi_k(T)= 0, T\neq\emptyset.
\end{array}
\right.
$$
Note that when $T=\emptyset$, $\chi_k(T)=0$ for any $1\leq k\leq n$, so the weakly inner-cospectral property guarantees $\{\chi_k(R),1\leq k\leq n\}=\{\chi_k(L),1\leq k\leq n\}$, which implies $\chi_1(R)=|R|=|L|=\chi_1(L)$. Therefore, $\Gamma$ is periodic at the time $2t$. By Lemma \ref{lem:period=integral}, $\Gamma$ is integral.
\end{proof}

As a straightforward corollary of Lemma \ref{lem:integeral}, we have the following result.

\begin{core}\label{cor:integeral R=L}
Let $\Gamma=\mathrm{BiCay}(G;R,L,T)$ be a bi-Cayley graph over a finite abelian group $G$ with $R=L$ or $T=\emptyset$. If $\Gamma$ has PST between vertices $g_p$ and $g_q$ with $(g_p,g_q)\in (G_0\times G_0)\cup (G_1\times G_1)$ at the time $t$, then $\Gamma$ is an integral graph.
\end{core}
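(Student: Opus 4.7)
The plan is to reduce the corollary directly to Lemma \ref{lem:integeral}. That lemma asserts that any weakly inner-cospectral bi-Cayley graph admitting PST between two vertices both in $G_0$ or both in $G_1$ must be integral. Hence the only work is to verify that under either hypothesis of the corollary (namely $R=L$ or $T=\emptyset$), the graph $\Gamma$ is weakly inner-cospectral, and then invoke the lemma.

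Both verifications are immediate, and in fact are already recorded in the sentence immediately following the definition of weakly inner-cospectral. For the case $R=L$, the defining equality
$$\{\chi_k(R)\mid \chi_k(T)=0,\ 1\le k\le n\}=\{\chi_k(L)\mid \chi_k(T)=0,\ 1\le k\le n\}$$
holds term by term, since $\chi_k(R)=\chi_k(L)$ for every $k$ regardless of which indices satisfy the side condition $\chi_k(T)=0$. For the case $T=\emptyset$, the convention $\chi_g(\emptyset)=0$ makes $\chi_k(T)=0$ for every $k$, so the indexing condition is satisfied by all $k$; the required set equality is then exactly the statement recorded after the definition and costs nothing further to verify.

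With weak inner-cospectrality established under either hypothesis, Lemma \ref{lem:integeral} applies verbatim to the PST hypothesis of the corollary and yields that $\Gamma$ is an integral graph. I do not anticipate any genuine obstacle here: the entire content of the corollary is the observation that the weakly inner-cospectral hypothesis of Lemma \ref{lem:integeral} is free under either of the two simple structural assumptions on $(R,L,T)$, so the corollary is essentially a packaging of Lemma \ref{lem:integeral} into two clean, ready-to-apply forms.
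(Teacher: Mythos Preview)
Your proposal is correct and matches the paper's approach exactly: the paper states the corollary as ``a straightforward corollary of Lemma~\ref{lem:integeral}'', relying on the sentence immediately after the definition of weakly inner-cospectral (that $R=L$ or $T=\emptyset$ automatically gives weak inner-cospectrality), and you have spelled out precisely this reduction. There is nothing to add---your elaboration of the $R=L$ case and your citation of the paper's recorded observation for the $T=\emptyset$ case are exactly the content the paper leaves implicit.
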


\subsection{Periodicity}
	
\begin{thm}\label{thm:periodTneqvarnothing}
Let $\Gamma=\mathrm{BiCay}(G;R,L,T)$ be an integral bi-Cayley graph over an abelian group $G$ of order $n$. Write
$$H=\{k\mid \chi_k(T)=0, 1\leq k\leq n\}.$$
Let $\lambda_1,\lambda_2,\ldots,\lambda_{2n}$ be as in Lemma $\ref{lem:eigenvalues and eigenvectors of Dhat}$.  Then $\Gamma$ is periodic for a vertex $g_p$ at the time
$$t\in \left\{
		\begin{array}{ll}
            \{\frac{2\pi z}{M_{\emptyset,X}}\mid z\in \ZZ\setminus \{0\}\} & {\rm{for\ }} T=\emptyset;\\[3mm]
            \{\frac{2\pi z}{\gcd(M_0,M_{1},M_{X})}\mid z\in\ZZ\setminus \{0\}\} & {\rm{for\ T\neq\emptyset}},
		\end{array}
		\right.$$
where $X=R$ if $g_p\in G_0$ and $X=L$ if $g_p\in G_1$,
$$M_{0}=\gcd(\lambda_{2k}-\lambda_{2k-1}\mid 1\leq k\leq n, k\notin H),$$ $$M_{1}=\gcd(\lambda_{2}-\lambda_{2k-1}\mid 1\leq k\leq n, k\notin H),$$
$$M_{X}=\gcd(\lambda_{2}-\chi_k(X)\mid k\in H),$$
and
$$M_{\emptyset,X}=\gcd(|X|-\chi_k(X)\mid 1\leq k\leq n).$$
Furthermore, a bi-Caylay graph $\Gamma=\mathrm{BiCay}(G;R,L,T)$ over an abelian group $G$ is periodic at the time $t$ if and only if $\Gamma$ is an integral graph and
$$t\in \left\{
		\begin{array}{ll}
            \{\frac{2\pi z}{\gcd(M_{\emptyset,R},M_{\emptyset,L})}\mid z\in \ZZ\setminus \{0\}\} & {\rm{for\ }} T=\emptyset;\\[3mm]
            \{\frac{2\pi z}{\gcd(M_0,M_1,M_R,M_L)}\mid z\in\ZZ\setminus \{0\}\} & {\rm{for\ T\neq\emptyset}}.
		\end{array}
		\right.$$
\end{thm}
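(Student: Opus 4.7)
The plan is to derive the theorem as a direct corollary of Theorem \ref{PSTiffG0G0} applied to the diagonal case $g_p=g_q$, using Lemma \ref{lem:period=integral} to upgrade the integrality hypothesis in the iff statement. The crucial observation is that $\Gamma$ being periodic at a vertex $g_p$ at time $t\neq 0$ is, by definition, the same as $|H_{g_p,g_p}(t)|=1$, i.e.\ PST from $g_p$ to itself. Since $(g_p,g_p)\in (G_0\times G_0)\cup (G_1\times G_1)$ (depending on which part $g_p$ lies in), this is governed by Theorem \ref{PSTiffG0G0}.

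First I would exploit the simplification $g_p g_q^{-1}=e$. Since every character satisfies $\chi_k(e)=1$, in the notation of Theorem \ref{PSTiffG0G0} we get $\Omega_{1}=\{1,2,\ldots,n\}$ and $\Omega_{-1}=\emptyset$. Then Condition $1)$ of Theorem \ref{PSTiffG0G0} is trivially satisfied, and by Remark \ref{remark}(2) Condition $2)$ is also automatic (Condition $2.1)$ ranges over the empty set $\Omega_{-1}$, while Condition $2.2)$ holds for any sufficiently small integer $\mu$). Hence the only nontrivial conditions surviving are $3)$ and $4)$ of Theorem \ref{PSTiffG0G0}, which (in the branch $\Omega_{-1}=\emptyset$) give exactly
\[
t\in\left\{\tfrac{2\pi z}{\gcd(M_0,M_1,M_X)}\mid z\in\ZZ\right\}\quad\text{for }T\neq\emptyset,
\qquad
t\in\left\{\tfrac{2\pi z}{M_{\emptyset,X}}\mid z\in\ZZ\right\}\quad\text{for }T=\emptyset,
\]
with $X=R$ if $g_p\in G_0$ and $X=L$ if $g_p\in G_1$. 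Excluding $z=0$ (since periodicity requires $t\neq 0$) yields the first assertion.

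For the ``furthermore'' statement, I would argue both directions. If $\Gamma$ is periodic at time $t$, then by Lemma \ref{lem:period=integral} it is an integral graph, so the hypothesis of the first part applies. Being periodic means being periodic at every vertex, so the allowed time $t$ must belong to the intersection of the admissible sets for $X=R$ (covering all $g_p\in G_0$) and for $X=L$ (covering all $g_p\in G_1$). Using the elementary identity $\tfrac{1}{a}\ZZ\cap\tfrac{1}{b}\ZZ=\tfrac{1}{\gcd(a,b)}\ZZ$ for positive integers $a,b$ (valid also when one of $a,b$ equals $0$ with the convention $\gcd(a,0)=a$, which covers the case $H=\emptyset$ making $M_X=0$), the two sets of times intersect in precisely the set stated in the theorem. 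The converse is immediate: if $t$ lies in the intersection, apply the first part at both types of vertex.

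The main obstacle is purely bookkeeping: one must verify that the reduction of Theorem \ref{PSTiffG0G0} in the case $\Omega_{-1}=\emptyset$ really produces the clean ``even multiple'' formula, and that the gcd of the two per-vertex denominators (one for $R$, one for $L$) combines into $\gcd(M_0,M_1,M_R,M_L)$ (respectively $\gcd(M_{\emptyset,R},M_{\emptyset,L})$) rather than something more subtle. This hinges on the fact that $M_0$ and $M_1$ depend only on the spectral data and not on the choice of $X\in\{R,L\}$, so they appear in both denominators and factor through the gcd cleanly; only the terms $M_X$ (respectively $M_{\emptyset,X}$) genuinely split into $R$ and $L$ pieces that must then be combined. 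Once this is noted, no further calculation beyond the standard intersection identity for $\tfrac{1}{a}\ZZ$ is required.
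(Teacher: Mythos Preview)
Your proposal is correct and follows essentially the same approach as the paper: reduce to Theorem \ref{PSTiffG0G0} with $g_p=g_q$, observe that $\Omega_{-1}=\emptyset$ makes Conditions $1)$ and $2)$ trivial (the latter via Remark \ref{remark}(2)), read off the time set from Conditions $3)$ and $4)$, and invoke Lemma \ref{lem:period=integral} for the integrality equivalence in the ``furthermore'' part. The paper's own proof is a terse three sentences along exactly these lines; your version simply fills in the bookkeeping (the intersection identity $\tfrac{1}{a}\ZZ\cap\tfrac{1}{b}\ZZ=\tfrac{1}{\gcd(a,b)}\ZZ$ and the observation that $M_0,M_1$ are independent of $X$) that the paper leaves as ``readily checked''.
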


\proof If $\Gamma$ is periodic for a vertex $g_p$, i.e., $\Gamma$ has PST between vertices $g_p$ and $g_p$, then $\Omega_{-1}$ defined in Theorem \ref{PSTiffG0G0} is an empty set. By Remark \ref{remark}(2), Condition $2)$ in Theorem $\ref{PSTiffG0G0}$ is trivial. By Lemma \ref{lem:period=integral}, a bi-Caylay graph over an abelian group is periodic if and only if it is an integral graph. Then it is readily checked that the desired result holds (note that when $T\neq\emptyset$,  we have $H\neq \emptyset$).  \qed

	
\section{Perfect state transfer on certain Cayley graphs}

If a Cayley graph $\Gamma$ can be seen as a bi-Cayley graph over an abelian group, then Theorem \ref{PSTiffG0G1} and Theorem \ref{PSTiffG0G0} provide necessary and sufficient conditions for $\Gamma$ having PST. This motivates us to examine PST on Cayley graphs that are also bi-Cayley graphs in this section.

An {\em extension} of a group $N$ by a group $F$ is a group $\tilde{G}$ that has a normal subgroup $G\cong N$ such that $\tilde{G}/G\cong F$. Let $N$ be a finite abelian group and let $\tilde{G}$ be an extension of $N$ by the cyclic group $\ZZ_2$. Then $\tilde{G}$ has a normal subgroup $G\cong N$ such that $\tilde{G}/G\cong \ZZ_2$. We can assume that $\tilde{G}=\left\langle bG \right\rangle=\{g,bg\mid g\in G\}$ with $b^2\in G$. When $b^2=1$, by establishing a relation between a Cayley graph over $\tilde{G}$ and a bi-Cayley graph over $G$, the following theorem gives a non-existence result on PST over $\tilde{G}$. Roughly speaking, a Cayley graph over $\tilde{G}$ can be viewed as a bi-Cayley graph over $G$ whose right part of the vertex set is $G$ and the left part is $bG$.

\begin{thm}\label{thm:n-even}
Let $N$ be an abelian group of order $n$. Let $\tilde{G}$ be an extension of $N$ by $\ZZ_2$ such that $\tilde{G}$ has a subgroup $G\cong N$ and $\tilde{G}=G:\left\langle b\right\rangle$ with $b^2=1$. Let
$\Gamma=\mathrm{Cay}(\tilde{G},\tilde{S})$ be a Cayley graph satisfying that
\begin{itemize}
\item [$1)$] $1\notin \tilde{S}=\tilde{S}^{-1}$;
\item [$2)$] for $g\in G$, $bg\in \tilde{S}$ if and only if $gb\in \tilde{S}$.
\end{itemize}
If $n\equiv 1\pmod{2}$, then $\Gamma$ has no PST between any pair of distinct vertices.
\end{thm}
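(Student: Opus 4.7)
The plan is to realise $\Gamma$ as a bi-Cayley graph over $G$ and then to invoke the PST classifications of Section~$4$. Concretely, set $R=\tilde S\cap G$, $L=bRb$, and $T=\{g\in G:bg\in\tilde S\}$. Using $\tilde S=\tilde S^{-1}$, $b^2=1$ and condition~$2)$ one checks routinely that $R=R^{-1}$, $L=L^{-1}$, $T=T^{-1}$, $T=bTb$, and that the identification $g_0\leftrightarrow g$, $g_1\leftrightarrow bg$ is an isomorphism $\Gamma\cong\mathrm{BiCay}(G;R,L,T)$.

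Assume for contradiction that $\Gamma$ has PST between two distinct vertices $u$ and $v$, and split by whether $u$ and $v$ lie in the same part of the bi-Cayley realisation. If they do, Corollary~\ref{core-8-11} produces an element of order $2$ in $G$; since $n=|G|$ is odd no such element exists, a contradiction.

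In the remaining case $(u,v)\in(G_0\times G_1)\cup(G_1\times G_0)$, let $c=g_pg_q^{-1}\in G$. Corollary~\ref{core:GDn} forces $R=L$, so Theorem~\ref{PSTiffG0G1} applies: in particular $\Gamma$ is integral, $\chi_k(T)\ne 0$ for every $k$, and Condition~$2.1)$ reads $\chi_k(c)=\pm|\chi_k(T)|/\chi_k(T)$. The key observation is that $T=T^{-1}$ forces $\chi_k(T)\in\mathbb{R}$, whence $|\chi_k(T)|/\chi_k(T)\in\{\pm 1\}$ and so $\chi_k(c)\in\{\pm 1\}$ for every $k$. Thus the character $\chi_c$ has order dividing $2$, so $c^2=1$ in $G$, and since $n$ is odd this forces $c=1$. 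Left-translating by $g_p^{-1}\in G\le\tilde G$, which is an automorphism of the Cayley graph $\Gamma$, we may assume the PST is between $1\in G$ and $b\in bG$.

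The final step, which I expect to be the main obstacle, is to rule out PST between these two specific vertices. Further structure from Theorem~\ref{PSTiffG0G1} must be invoked. First, Parseval gives $\sum_{k=1}^n|\chi_k(T)|^2=n|T|$; combining this with the equality of $2$-adic valuations $v_2(|\chi_k(T)|)=v_2(|T|)$ supplied by Condition~$2.3)$, together with $n$ odd, comparison of $v_2$ on both sides forces $v_2(|T|)=0$, so $|T|$ is odd. Moreover $R=R^{-1}\subseteq G\setminus\{1\}$ in an odd-order abelian group is a disjoint union of pairs $\{r,r^{-1}\}$ with $r\ne r^{-1}$, so $|R|$ is even. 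Plugging these into $\lambda_2-\lambda_{2k}=(|R|-\chi_k(R))+(|T|-|\chi_k(T)|)$, the second summand is always even, so Condition~$3)$ of Theorem~\ref{PSTiffG0G1} (namely $v_2(M)>v_2(|T|)=0$ whenever $M=\gcd_k(\lambda_2-\lambda_{2k})>0$) reduces to the parity statement that $|R|-\chi_k(R)$ is even for every $k$. The plan is to combine the integrality of $\chi_k(R)$, the orthogonality $\sum_k\chi_k(R)=0$ and the $b$-invariant, inverse-closed structure of $R$ in the odd-order abelian group $G$ to exhibit some $k$ with $\chi_k(R)$ odd, yielding the required contradiction; the boundary case $M=0$ unwinds to $R=\emptyset$ and $T=\{1\}$ and must be disposed of separately using the remaining structural hypotheses on $\Gamma$.
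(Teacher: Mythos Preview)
Your setup and the treatment of the same-part case match the paper exactly: realise $\Gamma$ as $\mathrm{BiCay}(G;R,L,T)$ via $g\mapsto g_0$, $bg\mapsto g_1$, observe that condition~2) together with $\tilde S=\tilde S^{-1}$ gives $T=T^{-1}$ so that every $\chi_k(T)$ is real, and then invoke the classification theorems. For $(g_p,g_q)$ in the same part you cite Corollary~\ref{core-8-11}; the paper instead cites Theorem~\ref{PSTiffG0G0}, but these are the same argument. For the cross-part case the paper's proof is one line: since $\chi_k(T)\in\mathbb{R}$, Condition~2.1) of Theorem~\ref{PSTiffG0G1} gives $\chi_k(g_pg_q^{-1})=\pm1$ for all $k$, so $\chi_{g_pg_q^{-1}}$ has order~$2$, forcing $n$ even. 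The paper stops there.

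You have in fact been more careful than the paper. You correctly notice that when the PST is between $(g_p)_0$ and $(g_p)_1$ (equivalently, between $g$ and $bg$ in $\tilde G$) one has $g_pg_q^{-1}=1$, and the order-$2$ conclusion does not follow. The paper's proof silently passes over this possibility. Your elaborate attempt to dispose of it via Parseval, parity of $|T|$ and $|R|$, and Condition~3) of Theorem~\ref{PSTiffG0G1} is doomed, however, and not because your computations are wrong: the boundary case you isolate, $R=L=\emptyset$ and $T=\{1\}$, corresponds to $\tilde S=\{b\}$, and then $\Gamma=\mathrm{Cay}(\tilde G,\{b\})$ is a disjoint union of $n$ edges, which \emph{does} have PST between $g$ and $bg$ at time $\pi/2$. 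All of hypotheses 1) and 2) are satisfied, so this is a genuine counterexample to the statement as written, with no connectivity hypothesis. Your ``remaining structural hypotheses'' do not exist. The moral is that the gap you found is real and cannot be filled; the paper's argument, as you reconstructed it, is the intended proof, and the edge case you worried about is a defect in the statement rather than in your reasoning.
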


\proof Let $R=\tilde{S}\cap G$, $L=b\tilde{S}b\cap G$ and $T=b\tilde{S}\cap G$. Due to $1\not\in\tilde{S}=\tilde{S}^{-1}$, we have $R=R^{-1}$, $L=L^{-1}$ and $1\notin R\cup L$. Construct a bi-Cayley graph $\mathrm{BiCay}(G;R,L,T)$ whose vertex set is the union of the right part $G_0=\{g_0\mid g\in G\}$ and the left part $G_1=\{g_1\mid g\in G\}$. It is readily checked that $\mathrm{Cay}(\tilde{G},\tilde{S})$ is isomorphic to $\mathrm{BiCay}(G;R,L,T)$ by using the vertex mapping $g\mapsto g_0$ and $bg\mapsto g_1$ for all $g\in G$.

Furthermore, if $T\neq \emptyset$, then for any $t\in T$, $t$ can be written as $b\cdot bg=g$ for some $g\in G$ and $bg\in \tilde{S}$. By the assumption, $bg\in \tilde{S}$ yields $gb\in \tilde{S}$. Since $\tilde{S}=\tilde{S}^{-1}$, we have $(gb)^{-1}=bg^{-1}\in \tilde{S}$, and so $b\cdot bg^{-1}=g^{-1}=t^{-1}\in T$. Thus $T=T^{-1}$, which implies that $\chi (T)$ is a real number for any $\chi\in \hat{G}$.

If $\mathrm{BiCay}(G;R,L,T)$ has PST between distinct vertices $g_p$ and $g_q$, then by Theorems \ref{PSTiffG0G1} and \ref{PSTiffG0G0}, $\chi_{k}({g_p}{g_q}^{-1})=\pm 1$ for all $1\leq k\leq n$. Therefore, $\chi_{g_pg_q^{-1}}$ is of order $2$, and so $n\equiv 0\mod 2$. \qed

\begin{rem}\label{remark:2022-1-13}
\begin{itemize}
\item [$1)$] In Theorem $\ref{thm:n-even}$, if $bgb=g$ for all $g\in G$, then $\tilde{G}$ is an abelian group. As a corollary of Theorem $\ref{thm:n-even}$, we can obtain Theorem $3.5$ in $\cite{f1}$.
\item [$2)$] In Theorem $\ref{thm:n-even}$, if $G=\ZZ_n$ and $bgb=g^{-1}$ for all $g\in G$, then $\tilde{G}$ is a dihedral group. As a corollary of Theorem $\ref{thm:n-even}$, we can obtain the the first half of Theorem $3.1$ in $\cite{C.d1}$. 
\end{itemize}
\end{rem}

Next, we give an example to determine perfect state transfers for an infinite family of Cayley graphs over dihedral groups by applying the relationship between Cayley graphs and bi-Cayley graphs established in the proof of Theorem \ref{thm:n-even}.

\begin{example}\label{ex:non-normalCay}
Let $m$ be a positive integer and $\tilde{G}=D_{8m}=\left\langle a,b \mid a^{8m}=b^2=1, ab=ba^{-1}\right\rangle$ be a dihedral group of order $16m$.
Let $\tilde{S}=\{a^{2j-1}\mid 1\leq j\leq 4m\}\cup \{ba^{2m},ba^{6m}\}$ and $\Gamma=\mathrm{Cay}(D_{8m},\tilde{S})$.
Then for any $1\leq i\leq 8m$ and $j\in\{0,1\}$, $\Gamma$ has PST between vertices $b^{j}a^i$ and $b^{j}a^{i+4m}$ at any time $t\in \{\frac{(1+2z)\pi}{2}\mid z\in \ZZ\}$.
Moreover, $\Gamma$ is periodic at any time $t\in \{z\pi\mid z\in\ZZ\}$.
\end{example}

\proof $\tilde{G}$ has a normal subgroup $G=\left\langle a\right\rangle $ and $\tilde{G}=\left\langle bG\right\rangle $ with $b^2=1$ and $bab=a^{-1}$.
Let $R=\tilde{S}\cap G=\{a^{2j-1}\mid 1\leq j\leq 4m\}$,
$L=b\tilde{S}b\cap G=\{a^{2j-1}\mid 1\leq j\leq 4m\}$ and $T=b\tilde{S}\cap G=\{a^{2m},a^{6m}\}$.
Construct a bi-Cayley graph $\mathrm{BiCay}(G;R,L,T)$ whose vertex set is the union of the right part $G_0=\{a^i_0\mid a^i\in G\}$ and the left part $G_1=\{a^i_1\mid a^i\in G\}$.
Then $\mathrm{Cay}(\tilde{G},\tilde{S})$ is isomorphic to $\mathrm{BiCay}(G;R,L,T)$ by using the vertex mapping $a^i\mapsto a^i_0$ and $ba^i\mapsto a^i_1$ for all $a^i\in G$.

First we apply Theorem \ref{thm:eigenvalues and eigenvectors of D} to calculate all eigenvalues of $\mathrm{BiCay}(G;R,L,T)$.
Since $G$ is an abelian group, $\hat{G}\cong G$.
Write $\hat{G}=\{\chi_k\mid 1\leq k\leq 8m\}$ and the mapping
$$\chi_k:G\longrightarrow \mathbb{C},~~\chi_{k}(a^i)=\omega_{8m}^{(k-1)i}$$
is a character of $G$, where $\omega_{8m}$ is a primitive $8m$-th root of unity in $\mathbb{C}$.
Simple calculation shows that for $1\leq k\leq 8m$,
$$\chi_k(R)=\chi_k(L)=\left\{
\begin{array}{ll}
	4m, &\ k=1;\\
	-4m, &\ k=4m+1;\\
	0, &\ \text{otherwise},	
\end{array}
\right.$$
and
$$\chi_k(T)=\left\{
\begin{array}{ll}
0, &\ k\equiv 0\pmod{4};\\
2, &\ k\equiv 1\pmod{4};\\
0, &\ k\equiv 2\pmod{4};\\
-2, &\ k\equiv 3\pmod{4}.	
\end{array}
\right.$$
By Theorem \ref{thm:eigenvalues and eigenvectors of D}, the eigenvalues of $\mathrm{BiCay}(G;R,L,T)$ are $\lambda_{2{k}-1}=\chi_k(R)-|\chi_k(T)|$ and $\lambda_{2{k}}=\chi_k(R)+|\chi_k(T)|$, $1\leq k\leq 8m$. They are calculated in the following table
\begin{center} 
	\begin{tabular}{c|cccccc}
		\toprule
		$k$&$1$&$4m+1$& even & otherwise\\ \hline
		$\lambda_{2k-1}$&$4m-2$&$-4m-2$&$0$&$-2$\\
		$\lambda_{2k}$&$4m+2$&$-4m+2$&$0$&$2$\\
		\bottomrule
	\end{tabular}
\end{center}
and we can see that $\Gamma$ is an integral graph. Note that $H=\{k\mid \chi_k(T)=0,1\leq k\leq 8m\}=\{2z\mid 1\leq z\leq 4m\}\neq \emptyset$. Therefore, by Theorem \ref{PSTiffG0G1}, $\mathrm{BiCay}(G;R,L,T)$ cannot have PST between any pair of vertices from $(G_0\times G_1) \cup (G_1\times G_0)$.

Now we consider the vertices $a^i$ and $a^{i'}$ with $(a^i,a^{i'})\in (G_0\times G_0) \cup (G_1\times G_1)$ for some  $1\leq i,i'\leq 8m$. Since $a^{4m}$ is the unique element of order $2$ in $G$, by Condition $1)$ in Theorem \ref{PSTiffG0G0} and Remark \ref{remark}, if $\mathrm{BiCay}(G;R,L,T)$ has PST between vertices $a^i$ and $a^{i'}$ with $i\neq i'$, then $a^{i'}=a^{i+4m}$.
To determine whether there exists PST between $a^i$ and $a^{i+4m}$ at time $t$, it suffices to examine Conditions 2) and 3) in Theorem \ref{PSTiffG0G0}. Clearly, the set $\Omega_{1}=\{k\mid \chi_k(a^{4m})=1,1\leq k\leq 8m\}=\{2z-1\mid 1\leq z\leq 4m\}$ and $\Omega_{-1}=\{k\mid \chi_k(a^{4m})=-1,1\leq k\leq 8m\}=\{2z\mid 1\leq z\leq 4m\}$.
For each $k\in \Omega_{-1}\cap H=H$, $v_2(\lambda_{2}-\chi_k(R))=v_2(\lambda_{2}-\chi_k(L))=v_2(4m+2)=1$, denoted by $\mu$.
For $k\notin H$, $v_2(\lambda_{2{k}}-\lambda_{2{k}-1})=v_2(4)=2=\mu +1$.
For $k\in \Omega_{1}\setminus H$, $v_2(\lambda_{2}-\lambda_{2{k}-1})\geq 2=\mu+1$.
Thus Condition 2) of Theorem \ref{PSTiffG0G0} holds. Furthermore, $M_0=\gcd(\lambda_{2{k}}-\lambda_{2{k}-1}\mid 1\leq k\leq 8m,k\notin H)=4$, $M_1=\gcd(\lambda_{2}-\lambda_{2{k}-1}\mid 1\leq k\leq 8m,k\notin H)=4$ and  $M_R=M_L=\gcd(\lambda_{2}-\chi_k(R)\mid k\in H)=4m+2$. Then by Condition $3)$ of Theorem \ref{PSTiffG0G0}, $t\in\{\frac{(1+2z)\pi}{2}\mid z\in \ZZ\}$. Therefore, $\mathrm{BiCay}(G;R,L,T)$ has PST between $a^i$ and $a^{i+4m}$ for any $(a^i,a^{i+4m})\in (G_0\times G_0) \cup (G_1\times G_1)$ at any time $t\in \{\frac{(1+2z)\pi}{2}\mid z\in \ZZ\}$.
That is to say, for any $1\leq i\leq 8m$ and $j\in\{0,1\}$, $\mathrm{Cay}(\tilde{G},\tilde{S})$ has PST between vertices $b^{j}a^i$ and $b^{j}a^{i+4m}$ at any time $t\in \{\frac{(1+2z)\pi}{2}\mid z\in \ZZ\}$.
Moreover, by Theorem \ref{thm:periodTneqvarnothing}, $\Gamma$ is periodic at any time $t\in \{z\pi\mid z\in\ZZ\}$. \qed

\begin{rem}\label{rem:conterexample}
A Cayley graph $\mathrm{Cay}(G,S)$ is called {\em normal} if $S$ is closed under conjugation. It was shown in $\cite{C.d2}$ that if a connected Cayley graph $\mathrm{Cay}(D_n,S)$ with $n$ even has PST between two distinct vertices, then $S$ is normal. However, the graph $\mathrm{Cay}(D_{8m},\tilde{S})$ in Example $\ref{ex:non-normalCay}$ is a connected non-normal Cayley graph, which gives a counterexample of Theorem $11$ in $\cite{C.d2}$. The correctness of Example $\ref{ex:non-normalCay}$ can be also checked by applying Corollary $\ref{core:H-entry}$ directly.
\end{rem}

In Theorem $\ref{thm:n-even}$, if $G$ is an abelian group and $bgb=g^{-1}$ for all $g\in G$, then $\tilde{G}$ is a generalized dihedral group. By applying Theorems \ref{PSTiffG0G1}, \ref{PSTiffG0G0} and \ref{thm:periodTneqvarnothing} together with the use of the relationship between Cayley graphs and bi-Cayley graphs established in the proof of Theorem \ref{thm:n-even}, one can characterize PST on the bi-Cayley graphs over generalized dihedral groups in the following example. We do not provide details here.

\begin{example}
Let $m$ be a positive integer and $G=\left\langle a, h\mid a^{8m}=h^2=1, ah=ha\right\rangle$. Let $\tilde{G}=\left\langle G,b\mid b^2=1, bgb=g^{-1}, g\in G\right\rangle$ be the generalized dihedral group over $G$. Let $\tilde{S}=\{a^{2j-1}\mid 1\leq j\leq 4m\}\cup\{a^{4m}h,b\}$ and $\Gamma=\mathrm{Cay}(\tilde{G},\tilde{S})$.
Then $\Gamma$ has PST between vertices $x$ and $y$ at any time $t\in \{(\frac{1}{2}+z)\pi\mid z\in \ZZ\}$ for any $x\in G$, $y\in bG$ and $y\neq bx$, and $\Gamma$ is periodic at any time $t\in \{z\pi\mid z\in\ZZ\}$.
\end{example}

In the concluding section of \cite{C.d1}, an example of Cayley graphs over generalized quaternion groups is mentioned without proof since \cite{C.d1} only focuses on examining PST on Cayley graphs over dihedral groups. We list this example here. The interested reader can check it from the point of view of bi-Cayley graphs. Note that this example concerns about the case of $\tilde{G}=\left\langle bG\right\rangle$ with $1\neq b^2\in G$.

\begin{example}\label{eg:Q} {\rm \cite[Lemma 6.1]{C.d1}}
Let $Q_{4m}=\left\langle a,b\mid a^{2m}=1,b^2 =a^m,b^{-1}ab=a^{-1}\right\rangle$ be a generalized quaternion group of order $4m$ with $m\equiv 0\mod{2}$. Let  $S=\{a^{\frac{m}{2}},a^{\frac{3m}{2}}\}\cup b\left\langle a\right\rangle$ and $\Gamma=\mathrm{Cay}(Q_{4m},S)$. Then for any $u\in Q_{4m}$, $\Gamma$ has PST between $u$ and $ua^m$ at any time $t\in \{\frac{(1+2z)\pi}{2}\mid z\in \ZZ\}$, and $\Gamma$ is periodic at any time $t\in \{\pi z\mid z\in\ZZ\}$.
\end{example}

\proof Let $G=\left\langle a\right\rangle $, $R=L=\{a^{\frac{m}{2}},a^{\frac{3m}{2}}\}$ and $T=G$. Construct a bi-Cayley graph $\mathrm{BiCay}(G;R,L$, $T)$ whose vertex set is the union of the right part $G_0=\{a^i_0\mid a^i\in G\}$ and the left part $G_1=\{a^i_1\mid a^i\in G\}$. Then $\mathrm{Cay}(Q_{4m},S)$ is isomorphic to $\mathrm{BiCay}(G;R,L,T)$ by using the vertex mapping $a^i\mapsto a^i_0$ and $ba^i\mapsto a^i_1$ for all $a^i\in G$. Apply Theorems \ref{PSTiffG0G1}, \ref{PSTiffG0G0} and \ref{thm:periodTneqvarnothing} to complete the proof. \qed

\section{Concluding remarks}

In this paper, we give some necessary and sufficient conditions to determine whether a bi-Cayley graph over a finite abelian group has PST. We show that if a bi-Cayley graph over abelian group has PST between two vertices from different vertex parts, it must be an integral graph (see Theorems \ref{PSTiffG0G1}) and is isomorphic to a Cayley graph over a generalized dihedral group (see Corollary \ref{core:GDn}). It is shown that a bi-Cayley graph over an abelian group is periodic if and only if it is an integral graph (see Lemma \ref{lem:period=integral}).
If there exists PST between two vertices belonging to a same part of the vertex set, then the number of vertices of this graph must be a multiple of four (see Corollary \ref{core-8-11}). For a bi-Cayley graph $\rm{BiCay}(G;R,L,T)$ with $T\neq \emptyset$, if there exists PST in some pairs of vertices at the same time $t$, then these pairs of vertices are either all from the same vertex part or all from different vertex parts (see Corollary \ref{core-t}). Furthermore, we show that, if the number of vertices is not a multiple of four, then there is no PST for a certain class of bi-Cayley graphs which are also Cayley graphs (see Theorem \ref{thm:n-even}). Especially, we give an example of a connected non-normal Cayley graph having PST between two distinct vertices (see Example \ref{ex:non-normalCay}), which produces a counterexample of Theorem $11$ in $\cite{C.d2}$. Comparing with the work in \cite{C.d1,C.d2,LCWW,f1}, we do not require the given bi-Cayley graph is connected (see Remark \ref{remark}).

Cao and Feng \cite{C.d1} pointed out that their method can be used to study PST in Cayley graphs over generalized dihedral groups $GD_n$ and quaternion groups $Q_{4n}$, but did not provide details. Since Cayley graphs over $GD_n$ and $Q_{4n}$ are also bi-Cayley graphs over abelian groups, our Theorems \ref{PSTiffG0G1} and \ref{PSTiffG0G0} can produce necessary and sufficient conditions to determine whether these graphs have PST. We only make use of representations of abelian groups, and do not need to analyze representations of $GD_n$ and $Q_{4n}$. In \cite{LCWW}, Luo et al. studied PST of Cayley graphs over semi-dihedral groups. Cayley graphs over semi-dihedral groups can be seen as bi-Cayley graphs over cyclic groups. Our results are also effective to deal with such kind of graphs.

The integrality is critical to a bi-Cayley graph such that it has PST. There have been a lot of research on the characterization of Cayley integral graphs. However, it is still an open problem to give a practical characterization for integral bi-Cayley graphs. Other problems related to this paper that are worth studying are listed below.

\begin{itemize}
\item Is every bi-Cayley graph over an abelian group having PST between a pair of vertices from the same vertex part integral?
\item Does there exist a connected irregular bi-Cayley graph over an abelian group having PST?
\item Does there exist a bi-Cayley graph over an abelian group having PST with $T\neq T^{-1}$?
\item Give examples of bi-Cayley graphs having PST over abelian groups which are not Cayley graphs.
\item Consider other types of state transfer on bi-Cayley graphs, such as pretty good state transfer (cf. \cite{gkklm,kly}), edge state transfer (cf. \cite{cg}), Laplacian perfect state transfer (cf. \cite{cl}) and state transfer on oriented graphs (cf. \cite{gl}).
\end{itemize}		

\subsection*{Acknowledgements}

The authors thank the anonymous referees for their valuable comments and suggestions that helped improve the equality of the paper. Special thanks goes to one of the reviewers for leading us to know the reference \cite{Arezoomand}.

\end{document}